\documentclass[10pt]{amsart}

\usepackage{amssymb}
\usepackage{pifont}
\usepackage{amscd}
\usepackage{amsmath}
\usepackage[dvips]{graphicx}
\usepackage{dsfont}
\usepackage[all]{xy}
\usepackage{xcolor}
\usepackage{amsfonts}

\usepackage[colorlinks=true, linkcolor=blue]{hyperref}

\usepackage{tikz-cd}
\usetikzlibrary{arrows.meta}
\usepackage{bm}  



\theoremstyle{definition}

\setcounter{tocdepth}{3}

\newtheorem{Thm}{Theorem}[section]
\newtheorem{Lem}[Thm]{Lemma}
\newtheorem{Def}[Thm]{Definition}

\newtheorem{Prop}[Thm]{Proposition}
\newtheorem{Ex}[Thm]{Example}
\newtheorem{Rmk}[Thm]{Remark}

\newtheorem{Claim}[Thm]{Claim}

\newcommand{\xto}{\xrightarrow}

\numberwithin{equation}{section}
\title[$A_{\infty}$-structures on Tate–Hochschild cohomology]%
      {$A_{\infty}$-structures on the additive decomposition of the Tate-Hochschild cohomology of a finite group algebra}

\author[X. Bian, L. Li, Y. Liu, T. Wang, Z. Wang, G. Zhou]%
       {Xiuli Bian, Longfei Li, Yuming Liu, Tianyun Wang, Zhengfang Wang, Guodong Zhou}

\address{Xiuli Bian, Guodong Zhou
\newline School of Mathematical Sciences
 \newline Key Laboratory of MEA (Ministry of Education)
   \newline  Shanghai Key laboratory of PMMP
\newline  East China Normal University
\newline 200241, Shanghai
\newline P. R. China}
\email{52205500018@stu.ecnu.edu.cn}
\email{gdzhou@math.ecnu.edu.cn}

\address{Longfei Li
\newline  Department of Mathematics 
 \newline  Kansas State University 
\newline   66502, Manhattan 
\newline USA}
\email{longfeili@ksu.edu}

\address{Yuming Liu
\newline School of Mathematical Sciences
\newline Laboratory of Mathematics and Complex Systems
\newline Beijing Normal University
\newline 100875, Beijing
\newline P. R. China  }
\email{ymliu@bnu.edu.cn}

\address{Tianyun Wang
\newline Capital Normal University High School
\newline 100048, Beijing 
\newline P. R. China  }
\email{850623539@163.com}

\address{Zhengfang Wang
\newline School of Mathematics
\newline Nanjing University
\newline 210093, Nanjing
\newline P. R. China  }
\email{zhengfangw@nju.edu.cn}

\date{\today}


\newcommand{\ot}{{\otimes}}

\newcommand{\lra}{\longrightarrow}

\newcommand{\ra}{\rightarrow}
\newcommand{\sdp}{\times\kern-.2em\vrule height1.1ex depth-.05ex}
\newcommand{\epi}{\lra \kern-.8em\ra}

\setlength{\textwidth}{17.3cm} \setlength{\textheight}{23cm}
\setlength{\topmargin}{-0.0cm} \setlength{\oddsidemargin}{-1cm}
\setlength{\evensidemargin}{-1cm}
\setlength{\abovedisplayskip}{3mm}
\setlength{\belowdisplayskip}{3mm}
\setlength{\abovedisplayshortskip}{0mm}
\setlength{\belowdisplayshortskip}{2mm} \normalbaselines
\raggedbottom

\begin{document}

\begin{abstract} 	

Firstly, for a finite group algebra, we provide a computational framework $\widehat{m}_n$ for  the Tate-Hochschild cochain complex in terms of the additive decomposition, by decomposing each planar n-ary tree into local two children and local three children. Secondly, we give  all $\widehat{m}_2$ formulas of the  Tate-Hochschild cochain complex in terms of the additive decomposition. Thirdly, we give explicit $A_{\infty}$-multiplication formulas for both the Hochschild cochain complex and the Hochschild chain complex under additive decompositions. Finally, we give $A_{\infty}$-multiplication formulas in the context of abelian groups.

\end{abstract}

\maketitle

\tableofcontents  

\section{Introduction}\

For an associative algebra $ A $, the concept of the Hochschild cohomology group $ \mathrm{HH}^*(A, A) $ was introduced by Hochschild in 1945~\cite{Hoc45}. This cohomology group is defined via the Hochschild cochain complex $ C^*(A, A) $, where $ C^n(A, A) $ is the space of linear maps from $ A^{\otimes n} $ to $ A $.  In~\cite{Ger63}, while studying the deformation theory of associative algebras in 1963, Gerstenhaber discovered that $ \mathrm{HH}^*(A, A) $ possesses a remarkably rich algebraic structure, now known as a \emph{Gerstenhaber algebra}. Specifically:
\begin{itemize}
    \item[(i)] $ \mathrm{HH}^*(A, A) $ is a graded-commutative associative algebra under the \emph{cup product};
    \item[(ii)] $ \mathrm{HH}^*(A, A) $ carries a \emph{graded Lie bracket of degree $-1$} (now called the \emph{Gerstenhaber bracket}), giving it the structure of a graded Lie algebra;
    \item[(iii)] The Gerstenhaber bracket and the cup product are \emph{compatible via the graded Leibniz rule}.
\end{itemize}

Subsequently, based on the complete resolution of a module, Tate introduced a theory of Tate cohomology~\cite{Tate52} that allows the definition of the cohomology groups $ \mathrm{H}^n $ for both positive and negative integers $ n $. Similarly, as an extension of the Hochschild cohomology to include the negative part, the \emph{Tate-Hochschild cohomology groups} $ \widehat{\mathrm{HH}}^*(A, A) $ were first defined by Buchweitz in \cite{Buch86}. In\cite{Wang21}, Wang constructed a new complex called \emph{singular Hochschild cochain complex}, which is used to calculate the Tate-Hochschild cohomology $ \widehat{\mathrm{HH}}^*(A, A) $ of an algebra $ A $. Using this complex, it was shown that $ \widehat{\mathrm{HH}}^*(A, A) $ admits a \emph{Gerstenhaber algebra} structure, which extends the Gerstenhaber structure in Hochschild cohomology $ \mathrm{HH}^*(A, A) $. Furthermore, in~\cite{RW19} Rivera and Wang extended the results of Tradler and Menichi, proving that when $ A $ is a symmetric algebra, the Gerstenhaber structure on its Tate-Hochschild cohomology can be extended to a \emph{Batalin–Vilkovisky (BV) algebra structure}. This result deepens the connection between higher homological algebraic structures and the symmetry of algebras.

In~\cite{RW19}, from the perspective of \lq\lq string topolog\rq\rq, the authors studied the Tate-Hochschild cohomology of finite-dimensional differential graded (dg) symmetric algebras. By constructing a Tate-Hochschild cochain complex $ \mathcal{D}^*(A, A) $, they realized a method to compute the Tate-Hochschild cohomology of a symmetric algebra $ A $. This complex possesses the following structure:

\begin{itemize}
  \item \textbf{Negative degrees:} Corresponding to the Hochschild chain complex $ C_*(A, A) $, with
  \[
  \mathcal{D}^{-m-1}(A, A) = C_m(A, A), \quad (m \geq 0);
  \]
  
  \item \textbf{Non-negative degrees:} Corresponding to the Hochschild cochain complex $ C^*(A, A) $, with
  \[
  \mathcal{D}^m(A, A) = C^m(A, A), \quad (m \geq 0);
  \]
  
  \item \textbf{Differential operator $ \tau $:} Defined in degree $-1$ as $ \tau \colon C_0(A, A) \to C^0(A, A) $, induced by the Casimir element $ \sum_i e_i \otimes f_i $, where
  \[
  a \mapsto \sum_i e_i a f_i.
  \]
\end{itemize}

In 1960, Stasheff introduced the notion of $ A_\infty $-algebras. In recent years, $ A_\infty $ structures have been applied in representation theory. For example, Keller~\cite{Keller01} used $ A_\infty $ structures to reconstruct complexes from homology groups. Inspired by Deligne's conjecture and Kontsevich's work on deformation quantization \cite{Kon03}, the focus has changed from the cohomology groups themselves to the higher structures in the complexes of the cohomology groups. One interesting problem is how to find non-trivial $ A_\infty $-structures. It has been pointed out in~\cite{LV12} that the construction of homotopy deformation retracts is one method to obtain $ A_\infty $-structures. Given a homotopy deformation retract, the homotopy transfer theorem can be used to lift an ordinary multiplication to an $ A_\infty $-multiplication. For example, it was shown in~\cite{RW19} that the complex $ \mathcal{D}^*(A, A) $ admits an $ A_\infty $-algebra structure $(m_1 = \partial, m_2, m_3, \dots)$ with $m_i = 0$  for $i > 3$.

More specifically, for a finite group $ G $, the group algebra $ kG $ over a field $ k $ is a typical example of a symmetric algebra, and serves as the main object of study in this paper. The cup product formula on $ \mathcal{D}^*(kG, kG) $ was given in~\cite{Wang21}, and the formula for $ m_3 $ was later obtained by Liu, Wang and Zhou in~\cite{LWZ21}. One concrete approach to constructing non-trivial $ A_\infty $-structures is using the additive decomposition of the Hochschild cohomology algebra of group algebras. It has been shown and proven in~\cite{SW99} that the Hochschild cohomology ring of $ kG $ admits the following additive decomposition:
\[
\mathrm{HH}^*(kG, kG) \cong \bigoplus_{x \in X} \mathrm{H}^*(C_G(x), k),
\]
where $ X $ is a complete set of representatives of the conjugacy classes of $ G $, and $ C_G(x) $ denotes the centralizer subgroup of $ x \in G $. Furthermore, in~\cite{LZ16} Liu and Zhou lifted the above decomposition to the complex level as follows:
\begin{equation*}
\xymatrix@C=0.000000001pc{
 		\ar@(lu, dl)_-{s^*}&C^*(kG,  kG) \ar@<0.5ex>[rrrrr]^-{\iota^*}&&&&&\bigoplus\limits_{x\in G}  C^*(G,  k). \ar@<0.5ex>[lllll]^-{\rho^*}}
 \end{equation*}

 Furthermore, it was shown in~\cite{LZ16} that the Tate-Hochschild cohomology group $ \widehat{\mathrm{HH}}^*(kG,kG) $, as a $ k $-linear space, admits the following additive decomposition:
\begin{equation*}
    \widehat{\mathrm{HH}}^*(kG,kG) \cong \bigoplus_{x \in X} \widehat{\mathrm{H}}^*(C_G(x),k),
\end{equation*}
where $ \widehat{\mathrm{H}}^*(C_G(x),k) $ denotes the Tate cohomology group of the centralizer subgroup $ C_G(x) $. In~\cite{LWZ21}, Liu, Wang and Zhou used the Tate-Hochschild cochain complex $ \mathcal{D}^*(kG,kG) $ for the group algebra and the Tate cochain complex $ \widehat{C}^*(C_G(x),k) $ for the finite group to give the additive decomposition of the Tate-Hochschild cohomology of the group algebra explicitly at the complex level, it can be  realized as the following homotopy deformation retraction:
\begin{equation*}
  	\xymatrix@C=0.000000001pc{
  		\ar@(lu, dl)_-{\hat{s}}& \mathcal{D}^{*}(k G,  k G)\ar@<0.5ex>[rrrrr]^-{\hat{\rho^*}}&&&&& \underset{x \in X}{\bigoplus} \widehat{C}^{*}\left(C_{G}(x),  k\right) .\ar@<0.5ex>[lllll]^-{\hat{\iota^*}}}
  \end{equation*}

This homotopy deformation retraction at the complexe level provides examples of non-trivial $ A_\infty $-algebra structures. In 2020~\cite{Li20}, Li used a similar homotopy deformation retraction for the additive decomposition of the Hochschild cochain complex:
 \begin{equation*}
 \xymatrix@C=0.000000001pc{
    		\ar@(lu, dl)_-{s^*}&C^*(kG,  kG) \ar@<0.5ex>[rrrrr]^-{\iota^*}&&&&&\bigoplus\limits_{x\in G}  C^*(G,  k), \ar@<0.5ex>[lllll]^-{\rho^*}}
    \end{equation*}
and, by categorizing planar binary trees into two local transformation types ($\alpha$-type and $\beta$-type), established the formula for the $ A_\infty $-multiplication on the right-hand side. That is, we have the following result: 

The expansion of the $ A_\infty $-multiplication formulas for the additive decomposition of the Hochschild cohomology algebra of $ kG $ is, up to a sign, determined by the following equation:
Let
\[
\widehat{\varphi}_i : C_G(x)^{\times n_i} \to k, \quad i = 1,2,\dots,n,
\]
then the multiplication $ m_n $ is given by:
\[
m_n(\widehat{\varphi}_1,\dots,\widehat{\varphi}_n)(g_{1,1},\dots,g_{1,i_1},g_{2,1},\dots,g_{2,i_2},\dots,g_{n,1},\dots,g_{n,i_n})
\]
\[
= \sum \pm \widehat{\varphi}_1(h_{1,1},\dots,h_{1,j_1}) \widehat{\varphi}_2(h_{2,1},\dots,h_{2,j_2}) \cdots \widehat{\varphi}_n(h_{n,1},\dots,h_{n,j_n}),
\]
where the tuples $ (h_{1,1},\dots,h_{1,j_1}),(h_{2,1},\dots,h_{2,j_2}),\dots,(h_{n,1},\dots,h_{n,j_n}) $ are determined by the $ \alpha $- and $ \beta $-type transformations of a planar binary tree.

In actual computations, we found that the classification of local decompositions of planar binary trees in \cite{Li20} does not cover all cases arising in the computation of the $ A_\infty $-algebra. Therefore, building upon the existing work on local decompositions of planar binary trees, we further refine the $\alpha$- and $\beta$-type transformations of planar binary trees, resulting in a more specific and operational computational procedure.

Furthermore, based on the following homotopy deformation retraction at the complex level of the Tate-Hochschild cochain complex of group algebras:
\begin{equation*}
		\xymatrix@C=0.000000001pc{
			\ar@(lu, dl)_-{\hat{s}}& \mathcal{D}^{*}(k G,  k G)\ar@<0.5ex>[rrrrr]^-{\hat{\rho^*}}&&&&& \underset{x \in X}{\bigoplus} \widehat{C}^{*}\left(C_{G}(x),  k\right) , \ar@<0.5ex>[lllll]^-{\hat{\iota^*}}}
	\end{equation*}
we may decompose planar $ n $-ary trees locally to compute the $ A_\infty $-multiplication formulas for both the Hochschild and Tate-Hochschild cohomology algebras of the group algebra $ kG $ under additive decomposition.

The layout of this paper is as follows.

In Section~\ref{Section: preliminaries}, we recall the basic definitions related to $A_{\infty}$-algebra and the homotopy transfer theorem, the definition of Tate-Hochschild cohomology together with its chain complex, the Tate cohomology complex of a finite group, as well as the additive decomposition of the Tate-Hochschild cohomology complex and the corresponding homotopy deformation retracts.

In Section~\ref{Section:mn-algorithm}, we present a method for computing the operation $\widehat{m}_n$ on the additive decomposition of the Tate-Hochschild cohomology at the complex level. The key idea is to perform a local decomposition on each $PT_n$, splitting it into locally two-branched and locally three-branched graphs. The corresponding local algorithms are $\alpha_{i\pm,j\pm,\pm}$, $\beta_{i\pm,j\pm,\pm}$ for the two-branched graphs, and $\alpha_{i+,j-,k+,+}$, $\alpha_{i-,j+,k-,-}$, $\beta_{i+,j-,k+,+}$, $\beta_{i-,j+,k-,-}$ for the three-branched graphs, where  $i,j\in\{0,1\}$. Through this local analysis, we obtain an explicit computational procedure for $\widehat{m}_n$. In other words, any planar $n$-ary tree can be divided into a composition of the algorithms $\alpha_{i\pm,j\pm,\pm}$, $\alpha_{i\pm,j\pm,k\pm,\pm}$, $\beta_{i\pm,j\pm,\pm}$, and $\beta_{i\pm,j\pm,k\pm,\pm}$. The corresponding algorithmic flowchart is given in Figure~\ref{figure:n-ary-algorithm}.

In Section~\ref{Section: computation for alpha and beta}, through explicit computations of the operations $\alpha_{1\pm,1\pm,\pm}$ and $\beta_{1\pm,1\pm,\pm}$, we obtain explicit formulas for cup product $\widehat{m}_2$ on the additive decomposition of the Tate-Hochschild cohomology at the complex level, stated as Theorem~\ref{Thm: Main 1}. Compared with the formulas given in \cite{LZ16, LWZ21}, our result further includes the cup product formula $\widehat{m}_2$ between $\mathcal{D}^{\ge 0}(kG, kG)$ and $\mathcal{D}^{< 0}(kG, kG)$, which was not covered in these earlier works.

In Section~\ref{section:Ainfinity-on-the-additive-decomposition}, by carrying out explicit computations for the algorithms corresponding to the  locally two-branched graphs $\alpha_{i+,j+,+}$ and $\beta_{i+,j+,+}$, we define the operations $\alpha_{i+,j+,+}$ and $\beta_{i+,j+,+}$. This enables us to derive all expressions of $\widehat{m}_n$ on the additive decomposition of the Hochschild cohomology at the complex level, as stated in Theorem~\ref{Theorem: main theorem for cohomology}. We also point out that this result provides a refined and corrected version of the formulas for $\widehat{m}_n$ given in \cite{Li20}.

In Section~\ref{section:Ainfinity-on-Hoch-homo}, by performing explicit computations for the algorithms corresponding to the  locally two-branched graphs $\alpha_{i-,j-,-}$ and $\beta_{i-,j-,-}$, we define the operations $\alpha_{i-,j-,-}$ and $\beta_{i-,j-,-}$. This allows us to derive all expressions of $\widehat{m}_n$ on the additive decomposition of the Hochschild homology at the complex level, as stated in Theorem~\ref{thm:mn-on-homo}.

In Section~\ref{section:abelian-group-case}, we further specialize the finite group $G$ under discussion to the case where $G$ is abelian. Building on the results of \cite[Corollary 4.11]{LWZ21}, we obtain explicit expression for the $A_{\infty}$-operations $\widehat{m}_n$ on the additive decomposition of the Tate-Hochschild cohomology of a finite abelian group $G$ at the complex level, as stated in Theorem~\ref{theorem:Ainfity-finite-group}. Finally, we compute the $A_{\infty}$-algebra structures explicitly for several examples of abelian groups.

\textbf{Conventions:} Throughout $k$ denotes a fixed field and all algebraic structures discussed in this paper will be defined over $k$. That is, a vector space will be a $k$-vector space, an algebra will be a $k$-algebra, and so on. We shall write $\otimes$ for $\otimes_{k}$, the tensor product over the field $k$.

\section{Preliminaries}\label{Section: preliminaries}

\subsection{\texorpdfstring{Homotopy transfer theorem for $A_\infty$-algebras}{Homotopy transfer theorem for A-infinity algebras}}\

From~\cite{LV12}, if $V$ is a chain complex homotopy equivalent to a differential graded (dg) algebra $A$, then the dg algebra structure on $A$ can be transferred to an $A_{\infty}$-algebra structure on $V$. More generally, an $A_{\infty}$-algebra structure on $A$ can be transferred to an $A_{\infty}$-algebra structure on $V$. We now proceed to a detailed review of the above content.

\begin{Def} {(\cite{Sta63})}
An \textbf{$ A_\infty $-algebra} over $k$ is a $\mathbb{Z}$-graded vector space $A=\bigoplus\limits_{p\in \mathbb{Z}}A_p$, endowed with graded maps
 \[
 m_n: A^{\otimes n}\rightarrow A\quad (n\geq 1),
 \]
of degree $2-n$, satisfying the following identities:
\begin{enumerate}
    \item For $n=1$, 
    \[
    m_1m_1=0,
    \]
    so $(A,m_1)$ is a cochain complex.

    \item For $n=2$,
    \[
    m_1m_2=m_2(m_1\ot 1_A+1_A\ot m_1),
    \]
    which means that $m_1$ acts as a derivation of $m_2$.

    \item For $n=3$,
    \[
    m_2(1_A\ot m_2-m_2\ot1_A)=m_1 m_3+m_3(m_1\ot 1_A\ot1_A+1_A\ot m_1\ot1_A+1_A\ot 1_A\ot m_1),
    \]
    expressing that $m_2$ is associative up to the homotopy provided by $m_3$.

    \item More generally, for every $n\ge 1$, 
    \[
 \sum\limits_{\begin{array}{c} n=r+s+t\\ r,t\geq 0, s\geq 1\end{array}}(-1)^{r+st} m_{r+1+t}(1^{\otimes r} \otimes m_s \otimes 1^{\otimes t})=0.
 \]
\end{enumerate}
\end{Def}

In particular, if $m_n=0$ with $n\geq 3$, then $(A, m_1, m_2)$ is just a dg algebra.

\begin{Def}
Given two chain complexes $(W, d_W)$ and $(V, d_V)$, along with two chain maps $p: W \to V$ and $i: V \to W$, if the following homotopy deformation retraction diagram holds:
\begin{equation*}
\xymatrix@C=0.000000001pc{
\ar@(lu,dl)_-{h} & (W, d_W) \ar@<0.5ex>[rrrrr]^-{p} &&&&& (V, d_V) \ar@<0.5ex>[lllll]^-{i}}
\end{equation*}
that is, if the identities $1_W - i p = d_W h + h d_W$ and $1_V = p i$ are satisfied,  
then $V$ is called a \textbf{homotopy deformation retraction core} of $W$.  
In particular, if in addition $h^2 = 0$, $h i = 0$, and $p h = 0$, then $V$ is called a \textbf{strong homotopy deformation retraction core} of $W$.
\end{Def}

We remark that there are some related notations of the above definition, see for example \cite[Definition 1.1]{CLZ}. 
\emph{In the following discussion of this paper, unless otherwise specified, all homotopy deformation retraction cores we considered are strong homotopy deformation retraction cores.} Before introducing the homotopy transfer theorem, we first present the relevant definitions and notations concerning planar rooted trees.

A \textbf{rooted tree} is an undirected connected graph without cycles. For a precise definition, see \cite{LV12}. We denote by $PT_n$ the set of planar trees with $n$ leaves. For example:
\vspace{2mm}
\[PT_1:=\{|\},\
 PT_2 := \left\{
\begin{tikzpicture}[scale=0.3, baseline={(0,-0.1)}, every path/.style={line width=0.6pt}]
    \tikzstyle{every node}=[{inner sep=0pt, minimum size=0pt, outer sep=0pt}]
    \node (0) at (0,0)  {};
    \node (1) at (0,-1) {};
    \node (2) at (-1,1) {};
    \node (3) at (1,1) {};
    \node (a0) at (0,1.4)  {};
    \node (a1) at (0,-1.6)  {};
    \node (a2) at (-1.5,-0.1)  {};
    \node (a3) at (1.5,-1.6)  {};
    \draw  (0)--(1);
    \draw  (0)--(2);
    \draw  (0)--(3);
\end{tikzpicture}
\right\},\ PT_3 := \left\{
\begin{tikzpicture}[scale=0.28, baseline={(2.5,0.05)}, every path/.style={line width=0.6pt}]
    \tikzstyle{every node}=[{inner sep=0pt, minimum size=0pt, outer sep=0pt}]
    \node (0) at (0,0)  {};
    \node (1) at (0,-1) {};
    \node (2) at (-1,1) {};
    \node (3) at (2,2) {};
    \node (4) at (-2,2) {};
    \node (5) at (0,2) {};
    \node (0-1) at (5,0)  {};
    \node (1-1) at (5,-1) {};
    \node (2-1) at (6,1) {};
    \node (3-1) at (7,2) {};
    \node (4-1) at (3,2) {};
    \node (5-1) at (5,2) {};
    \node (0-2) at (10,0)  {};
    \node (1-2) at (10,-1) {};
    \node (2-2) at (10,2) {};
    \node (3-2) at (12,2) {};
    \node (4-2) at (8,2) {};
    \node (b1) at (2.5,0) {,};
    \node (b2) at (7.5,0) {,};
    \node (a0) at (-2.5,0)  {};
    \node (a1) at (12.5,0)  {};
    \node (a2) at (0,-1.5)  {};
    \node (a3) at (0,2.5)  {};
    \draw  (0)--(1);
    \draw  (0)--(2);
    \draw  (0)--(3);
    \draw  (2)--(4);
    \draw  (2)--(5);
    \draw  (0-1)--(1-1);
    \draw  (0-1)--(4-1);
    \draw  (0-1)--(2-1);
    \draw  (2-1)--(5-1);
    \draw  (2-1)--(3-1);
    \draw  (0-2)--(1-2);
    \draw  (0-2)--(4-2);
    \draw  (0-2)--(2-2);
    \draw  (0-2)--(3-2);
\end{tikzpicture}
\right\},\]
\vspace{2mm}
\[PT_4 := \left\{
\begin{tikzpicture}[scale=0.28, baseline={(2.5,0.2)}, every path/.style={line width=0.6pt}]
    \tikzstyle{every node}=[{inner sep=0pt, minimum size=0pt, outer sep=0pt}]
    \node (0) at (0,0)  {};
    \node (1) at (0,-1) {};
    \node (2) at (-1,1) {};
    \node (3) at (-2,2) {};
    \node (4) at (-3,3) {};
    \node (5) at (-1,3) {};
    \node (6) at (1,3) {};
    \node (7) at (3,3) {};
    \node (0-1) at (9.5,0)  {};
    \node (1-1) at (9.5,-1) {};
    \node (2-1) at (8.5,1) {};
    \node (3-1) at (8.5,3) {};
    \node (4-1) at (6.5,3) {};
    \node (5-1) at (10.5,3) {};
    \node (6-1) at (12.5,3) {};
    \node (0-2) at (19,0)  {};
    \node (1-2) at (19,-1) {};
    \node (2-2) at (16,3) {};
    \node (3-2) at (18,3) {};
    \node (4-2) at (20,3) {};
    \node (5-2) at (22,3) {};
    \node (b1) at (3.7,0) {,};   
    \node (b2) at (4.8,0) {$\cdots$};   
    \node (b3) at (5.8,0) {,};
    \node (b4) at (13.2,0) {,};   
    \node (b5) at (14.3,0) {$\cdots$};   
    \node (b6) at (15.3,0) {,};
    \node (a0) at (-3.5,0)  {};
    \node (a1) at (22.5,0)  {};
     \node (a2) at (0,3.5)  {};
    \draw  (0)--(1);
    \draw  (0)--(2);
    \draw  (0)--(7);
    \draw  (2)--(3);
    \draw  (2)--(6);
    \draw  (3)--(4);
    \draw  (3)--(5);
    \draw  (0-1)--(1-1);
    \draw  (0-1)--(2-1);
    \draw  (0-1)--(6-1);
    \draw  (2-1)--(3-1);
    \draw  (2-1)--(4-1);
    \draw  (2-1)--(5-1);
    \draw  (0-2)--(1-2);
    \draw  (0-2)--(2-2);
    \draw  (0-2)--(3-2);
    \draw  (0-2)--(4-2);
    \draw  (0-2)--(5-2);
\end{tikzpicture}
\right\}.\]
\vspace{2mm}

In particular, a rooted tree in which each vertex has at most two leaves is called a \textbf{planar binary tree}, abbreviated as $PBT$. For example:
\vspace{2mm}
\[PBT_1:=\{\textbf{|}\},\
 PBT_2 := \left\{
\begin{tikzpicture}[scale=0.3, baseline={(0,-0.1)}, every path/.style={line width=0.6pt}]
    \tikzstyle{every node}=[{inner sep=0pt, minimum size=0pt, outer sep=0pt}]
    \node (0) at (0,0)  {};
    \node (1) at (0,-1) {};
    \node (2) at (-1,1) {};
    \node (3) at (1,1) {};
    \node (a0) at (0,1.4)  {};
    \node (a1) at (0,-1.6)  {};
    \node (a2) at (-1.5,-0.1)  {};
    \node (a3) at (1.5,-1.6)  {};
    \draw  (0)--(1);
    \draw  (0)--(2);
    \draw  (0)--(3);
\end{tikzpicture}
\right\},\ PBT_3 := \left\{
\begin{tikzpicture}[scale=0.25, baseline={(2.5,0.05)}, every path/.style={line width=0.6pt}]
    \tikzstyle{every node}=[{inner sep=0pt, minimum size=0pt, outer sep=0pt}]
    \node (0) at (0,0)  {};
    \node (1) at (0,-1) {};
    \node (2) at (-1,1) {};
    \node (3) at (2,2) {};
    \node (4) at (-2,2) {};
    \node (5) at (0,2) {};
    \node (0-1) at (6,0)  {};
    \node (1-1) at (6,-1) {};
    \node (2-1) at (7,1) {};
    \node (3-1) at (8,2) {};
    \node (4-1) at (4,2) {};
    \node (5-1) at (6,2) {};
    \node (b1) at (3,0) {,};
    \node (a0) at (-2.5,0)  {};
    \node (a1) at (8.5,0)  {};
    \node (a2) at (0,-1.5)  {};
    \node (a3) at (0,2.5)  {};
    \draw  (0)--(1);
    \draw  (0)--(2);
    \draw  (0)--(3);
    \draw  (2)--(4);
    \draw  (2)--(5);
    \draw  (0-1)--(1-1);
    \draw  (0-1)--(4-1);
    \draw  (0-1)--(2-1);
    \draw  (2-1)--(5-1);
    \draw  (2-1)--(3-1);
\end{tikzpicture}
\right\},\]
\vspace{2mm}
\[PBT_4 := \left\{
\begin{tikzpicture}[scale=0.28, baseline={(2.5,0.2)}, every path/.style={line width=0.6pt}]
    \tikzstyle{every node}=[{inner sep=0pt, minimum size=0pt, outer sep=0pt}]
    \node (0) at (0,0)  {};
    \node (1) at (0,-1) {};
    \node (2) at (-1,1) {};
    \node (3) at (-2,2) {};
    \node (4) at (-3,3) {};
    \node (5) at (-1,3) {};
    \node (6) at (1,3) {};
    \node (7) at (3,3) {};
    \node (0-1) at (8,0)  {};
    \node (1-1) at (8,-1) {};
    \node (2-1) at (7,1) {};
    \node (3-1) at (5,3) {};
    \node (4-1) at (7,3) {};
    \node (5-1) at (9,3) {};
    \node (6-1) at (11,3) {};
    \node (7-1) at (8,2) {};
    \node (0-2) at (16,0)  {};
    \node (1-2) at (16,-1) {};
    \node (2-2) at (14,2) {};
    \node (3-2) at (18,2) {};
    \node (4-2) at (13,3) {};
    \node (5-2) at (15,3) {};
    \node (6-2) at (17,3) {};
    \node (7-2) at (19,3) {};
    \node (0-3) at (24,0)  {};
    \node (1-3) at (24,-1) {};
    \node (2-3) at (25,1) {};
    \node (3-3) at (21,3) {};
    \node (4-3) at (23,3) {};
    \node (5-3) at (25,3) {};
    \node (6-3) at (27,3) {};
    \node (7-3) at (24,2) {};
    \node (0-4) at (32,0)  {};
    \node (1-4) at (32,-1) {};
    \node (2-4) at (33,1) {};
    \node (3-4) at (29,3) {};
    \node (4-4) at (31,3) {};
    \node (5-4) at (33,3) {};
    \node (6-4) at (35,3) {};
    \node (7-4) at (34,2) {};
    \node (b1) at (4,0) {,};   
    \node (b2) at (12,0) {,};  
    \node (b3) at (20,0) {,}; 
    \node (b4) at (28,0) {,}; 
    \node (a0) at (-3.5,0)  {};
    \draw  (0)--(1);
    \draw  (0)--(2);
    \draw  (0)--(7);
    \draw  (2)--(3);
    \draw  (2)--(6);
    \draw  (3)--(4);
    \draw  (3)--(5);
    \draw  (0-1)--(1-1);
    \draw  (0-1)--(2-1);
    \draw  (0-1)--(6-1);
    \draw  (2-1)--(3-1);
    \draw  (2-1)--(5-1);
    \draw  (4-1)--(7-1);
    \draw  (0-2)--(1-2);
    \draw  (0-2)--(4-2);
    \draw  (0-2)--(7-2);
    \draw  (2-2)--(5-2);
    \draw  (3-2)--(6-2);
    \draw  (0-3)--(1-3);
    \draw  (0-3)--(3-3);
    \draw  (0-3)--(6-3);
    \draw  (2-3)--(4-3);
    \draw  (5-3)--(7-3);
    \draw  (0-4)--(1-4);
    \draw  (0-4)--(3-4);
    \draw  (0-4)--(6-4);
    \draw  (2-4)--(4-4);
    \draw  (5-4)--(7-4);
\end{tikzpicture}
\right\}.\]
\vspace{2mm}

The homotopy transfer theorem for  $A_\infty$-structures is stated as follows:
\begin{Thm} {(\cite{LV12})}
	Let $ (W,d_W) $ be an $A_{\infty}$-algebra and we have the following homotopy retract:
\begin{equation*}
\xymatrix@C=0.000000001pc{
\ar@(lu,dl)_-{h} & (W, d_W) \ar@<0.5ex>[rrrrr]^-{p} &&&&& (V, d_V) \ar@<0.5ex>[lllll]^-{i}}
\end{equation*}
with $1_W - i p = d_W h + h d_W$ and $1_V = p i$, then $(V,d_V)$ inherits an $A_{\infty}$-algebra structure $\{m'_n\}_{n\ge 1}$ from $ (W,d_W) $. Specifically,
\end{Thm}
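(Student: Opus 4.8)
The plan is to prove the homotopy transfer theorem by giving the explicit tree-summation formula for the transferred operations $\{m'_n\}_{n\ge 1}$ and then verifying the $A_\infty$-relations. First I would set $m'_1 := d_V$, and for $n\ge 2$ define $m'_n$ as a signed sum over planar rooted trees $T\in PT_n$ (equivalently, one may first reduce to binary trees $PBT_n$ when the ambient structure on $W$ is a dg algebra, and then incorporate the higher $m_s$ on $W$ by allowing $s$-ary internal vertices). Concretely, to each such tree one associates a composite operation $V^{\otimes n}\to W$ by: decorating the $n$ leaves with copies of the inclusion $i\colon V\to W$, decorating each internal vertex having $s$ inputs with the operation $m_s$ of the $A_\infty$-algebra $W$, decorating every internal edge with the homotopy $h\colon W\to W$, and decorating the root edge with the projection $p\colon W\to V$. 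One then sets
\[
m'_n \;=\; \sum_{T\in PT_n}\pm\, m_T,
\]
where $m_T$ is the composite just described and the sign attached to $T$ is the Koszul sign dictated by the degrees $|m_s|=2-s$, $|h|=-1$ and the usual sign rule for composing graded maps (for $n=2$ the only tree is the $2$-corolla, giving $m'_2 = p\, m_2 (i\otimes i)$; for $n=3$ one gets the two binary trees contributing $p\,m_2(h m_2(i\otimes i)\otimes i)$ and $p\, m_2(i\otimes h m_2(i\otimes i))$ together with the corolla term $p\, m_3(i\otimes i\otimes i)$).

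Next I would check that each $m'_n$ has the correct degree $2-n$: this is immediate by adding up degrees along the tree, since a tree in $PT_n$ with leaves, internal vertices of arities $s_1,\dots,s_r$, and $e$ internal edges satisfies the Euler-type identity $\sum_a(2-s_a) - e = 2-n$ (each internal vertex of arity $s_a$ drops the degree by $s_a-2$ net after accounting for its output edge, and each $h$ drops it by one more), so the total degree is $2-n$ as required, independent of $p,i$ which are degree $0$.

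The heart of the argument is the verification of the $A_\infty$-relation
\[
\sum_{\substack{n=r+s+t\\ r,t\ge 0,\ s\ge 1}}(-1)^{r+st}\, m'_{r+1+t}\bigl(1^{\otimes r}\otimes m'_s\otimes 1^{\otimes t}\bigr)=0.
\]
The standard approach is the following. Expand the left-hand side using the tree formula: $m'_{r+1+t}(1^{\otimes r}\otimes m'_s\otimes 1^{\otimes t})$ becomes a sum over pairs of trees, which one reorganizes as a sum over all trees $T\in PT_n$ equipped with a choice of one distinguished internal edge — namely the edge joining the root of the "inner" tree to the "outer" tree, which carries a factor $h i p$ rather than $h$. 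Now use the side conditions $hi=0$, $ph=0$, $h^2=0$ together with the defining homotopy identity $1_W - ip = d_W h + h d_W$ in the form $ip = 1_W - (d_W h + h d_W)$: substituting $ip$ at the distinguished edge produces three kinds of terms — a "$1_W$" term (the two trees glue into a single tree with an ordinary edge), a "$d_W h$" term, and an "$h d_W$" term. One then shows that the $1_W$ terms cancel in pairs coming from the two ways of cutting a given internal edge (for binary trees this is exactly the associativity pentagon bookkeeping), while the $d_W h$ and $h d_W$ terms, after applying $m_1 = d_W$ as a derivation through the tree (i.e. the $A_\infty$-relations already satisfied by $W$), telescope against the contributions of the internal $m_s$'s and against $m'_1 = d_V$. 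The bookkeeping of Koszul signs is routine but delicate; the cleanest way to package all of it is via the bar/cobar description — realizing $\{m'_n\}$ as $p\circ\bigl(\text{sum of trees}\bigr)\circ i^{\otimes\bullet}$ as a morphism of coalgebras, or equivalently via the Maurer–Cartan / "perturbation lemma" formulation — so that the entire family of identities collapses to a single equation $\widehat{m}'\,\widehat{m}'=0$ on the tensor coalgebra.

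The main obstacle I expect is precisely this sign and cancellation bookkeeping in the $A_\infty$-relation: making rigorous that the "$1_W$" contributions from cutting edges cancel in pairs and that the "$d_W h + h d_W$" contributions reassemble into the relations inherited from $W$. I would handle this by adopting the homological-perturbation-lemma viewpoint: regard the tree sum as the geometric series $\Phi = \sum_{k\ge 0}(h\,\mu)^{k}$, where $\mu$ encodes the $m_s$ on $W$, and deduce the $A_\infty$-structure equation on $V$ formally from the perturbation lemma applied to the deformation retract, with the side conditions $h^2=hi=ph=0$ guaranteeing that the perturbed homotopy, projection and inclusion still form a contraction. This reduces the theorem to the classical perturbation lemma (which I take as known, following \cite{LV12}), and the explicit tree formula then emerges by expanding the geometric series. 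For the purposes of this paper, where $W$ is a dg algebra (so $\mu$ is just $m_2$ and only binary trees occur) this specializes to Kadeishvili's original construction, and it is this binary-tree version, refined into the $\alpha$- and $\beta$-type local moves of later sections, that we shall actually use.
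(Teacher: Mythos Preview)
Your sketch is mathematically sound and is indeed the standard route to the homotopy transfer theorem (tree formula plus perturbation-lemma bookkeeping). However, the paper does not prove this theorem at all: it is stated with a citation to \cite{LV12} and only the explicit tree formula is recorded, together with the recipe (leaves $\mapsto i$, vertices of arity $s\mapsto m_s$, internal edges $\mapsto h$, root $\mapsto p$) and the worked examples for $m'_2$ and $m'_3$. No verification of the $A_\infty$-relations is attempted in the paper; the result is simply imported as background.

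So there is no ``paper's own proof'' to compare against. Your proposal goes well beyond what the paper does, supplying the actual argument that \cite{LV12} contains. If your goal is to match the paper's treatment, you should simply state the formula and cite the reference; if your goal is to make the paper self-contained, your outline is correct and the perturbation-lemma packaging you suggest is the cleanest way to handle the sign cancellations you flag as the main obstacle.
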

\[
\begin{tikzpicture}
	\tikzstyle{every node}=[thick, minimum size=4pt, inner sep=1pt]
    \tikzset{dot/.style={inner sep=0pt, minimum size=0pt, outer sep=0pt}}

	\node(0) at (0, 0) {$m_2$};
    \node(1) at (0, -1.25) {};
	\node(3) at (-1.5, 1) {$m_3$};
    \node(4) at (1.5, 1){$ m_2 $};
    \node(5) at (-4, 0.7){$ m_n'=\sum\limits_{\mathrm{PT_n}}\pm $};
    \node(6) at (4,0.7) {.};
	\node[dot] (a1) at (-2.5, 2){};
	\node[dot] (a2) at (-1.5, 2){};
    \node[dot] (a3) at (-0.5, 2){};
    \node[dot] (a4) at (0.5, 2){};
    \node(a5) at (2.5,2) {$m_2$};
    \node[dot] (b1) at (-2.5, 2.8){};
	\node[dot] (b2) at (-1.5, 2.8){};
    \node[dot] (b3) at (-0.5, 2.8){};
    \node[dot] (b4) at (0.5, 2.8){};
    \node[dot] (b5) at (1.5, 2.8){};
    \node[dot] (b6) at (3.5, 2.8){};
    \node[dot] (c1) at (1.5, 3.4){};
    \node[dot] (c2) at (3.5, 3.4){};
    
	\draw (0)-- node[pos=0.4, above, yshift=3pt] {$h$} (3);
    \draw[->] (0)-- node[pos=0.5,right,xshift=2pt] {$p$} (1);
	\draw (0)-- node[pos=0.4, above, yshift=3pt] {$h$} (4);
    \draw (4)-- node[pos=0.4, above, yshift=3pt] {$h$} (a5);
    \draw (4)--(a4);
	\draw (3)--(a1);
	\draw (3)--(a2);
    \draw (3)--(a3);
    \draw[->] (b1)-- node[pos=0.45, right, xshift=1pt] {$i$} (a1);
	\draw[->] (b2)-- node[pos=0.45, right, xshift=1pt] {$i$} (a2);
    \draw[->] (b3)-- node[pos=0.45, right, xshift=1pt] {$i$} (a3);
    \draw[->] (b4)-- node[pos=0.45, right, xshift=1pt] {$i$} (a4);
    \draw (a5)--(b5);
    \draw (a5)--(b6);
    \draw[->] (c1)-- node[pos=0.45, right, xshift=1pt] {$i$} (b5);
    \draw[->] (c2)-- node[pos=0.45, right, xshift=1pt] {$i$} (b6);
\end{tikzpicture}
\]

In fact,
for any $t\in PT_n$,  the  $n$-ary operation $m'_t$ is obtained  by putting $i$ on the leaves, $m_x$ on the vertice if this vertice has $x$ leaves, $h$ on the internal edges and $p$ on the root. The summation is taken over every planar tree in $PT_n$.

In particular, if $(W, d_W)$ is a differential graded associative algebra, then
\[
m'_n = \sum_{t \in PBT_n} \pm m_t,
\]
where each $m_t$ is defined by placing $i$ at the leaves of the tree, $m_2$ at the vertices, $h$ on the internal edges, and $p$ at the root.

\begin{Ex}
    Formulas for $m'_2$ and $m'_3$:
    \vspace{4mm}
    
\begin{tikzpicture}[scale=0.8]
	\tikzstyle{every node}=[thick,minimum size=6pt, inner sep=1pt]
    
    \node (a0) at (-5.5,-0.5)  {$m_2'$};
    \node (a1) at (-4.5,-0.5)  {$=$};
    \node (a2) at (0.5,-0.5)  {$=$};
    \node (a3) at (2,-0.5)  {$pm_2(i,i)$,};
	\node (b0) at (-2,-0.5)  {$m_2$};
	\node (b1) at (-3.5,1)  [minimum size=0pt]{$V$};
	\node (b3) at (-0.5,1)  [minimum size=0pt]{$V$};
	\node (b4) at (-2,-2)  [minimum size=0pt]{};
    
	\draw         (b0)-- node[pos=0.35, above, yshift=3pt] {$i$}(b1);
	\draw         (b0)-- node[pos=0.35, above, yshift=3pt] {$i$}(b3);
	\draw         (b0)-- node[pos=0.5, left, xshift=-3pt] {$p$}(b4);
\end{tikzpicture}
\vspace{4mm}

\begin{tikzpicture}[scale=0.7]
	\tikzstyle{every node}=[thick, minimum size=4pt, inner sep=1pt]

    \node(a0) at (-4,0) {$m_3'$};
    \node(a1) at (-3,0) {$=$};
    \node(a2) at (2.5,0) {$-$};
    \node(a3) at (7.5,0) {$+$};
    \node(a4) at (-3,-2.5) {$=$};
    \node(a5) at (-0.5,-2.5) {$pm_3(i,i,i)$};
    \node(a6) at (2.5,-2.5) {$-$};
    \node(a7) at (5,-2.5) {$pm_2(hm_2(i,i),i)$};
    \node(a8) at (7.5,-2.5) {$+$};
    \node(a9) at (10,-2.5) {$pm_2(i,hm_2(i,i)).$};
	\node(b1) at (0, 0) {$m_3$};
    \node(b0) at (0,-1.5) {};
	\node(b3) at (-2, 2){$ V $};
    \node(b4) at (0, 2){$ V $};
	\node(b5) at (2, 2){$ V $};
    \node(c1) at (5, 0) {$m_2$};
    \node(c0) at (5,-1.5) {};
	\node(c3) at (3, 2){$ V $};
    \node(c4) at (5, 2){$ V $};
	\node(c5) at (7, 2){$ V $};  
    \node(c6) at (4, 1){$ m_2 $};
    \node(d1) at (10, 0) {$m_2$};
    \node(d0) at (10,-1.5) {};
	\node(d3) at (8, 2){$ V $};
    \node(d4) at (10, 2){$ V $};
	\node(d5) at (12, 2){$ V $};
    \node(d6) at (11, 1) {$m_2$};
    
    \draw (b0)-- node[pos=0.45, left, xshift=-3pt] {$p$} (b1);
	\draw (b1)-- node[pos=0.35, above, yshift=3pt] {$i$} (b3);
	\draw (b1)-- node[pos=0.35, above, xshift=-3pt] {$i$} (b4);
	\draw (b1)-- node[pos=0.35, above, yshift=3pt] {$i$} (b5);
    \draw (c0)-- node[pos=0.45, left, xshift=-3pt] {$p$} (c1);
	\draw (c1)-- node[pos=0.7, below, yshift=-3pt] {$h$} (c6);
    \draw (c3)-- node[pos=0.6, above, xshift=3pt] {$i$} (c6);
	\draw (c6)-- node[pos=0.65, below, yshift=-3pt] {$i$} (c4);
	\draw (c1)-- node[pos=0.35, above, yshift=3pt] {$i$} (c5);
    \draw (d0)-- node[pos=0.45, left, xshift=-3pt] {$p$} (d1);
	\draw (d1)-- node[pos=0.35, above, yshift=3pt] {$i$} (d3);
	\draw (d6)-- node[pos=0.5, below, xshift=-3pt] {$i$} (d4);
	\draw (d6)-- node[pos=0.35, above, yshift=3pt] {$i$} (d5);
    \draw (d1)-- node[pos=0.5, below, xshift=3pt] {$h$} (d6);
    
\end{tikzpicture}
\end{Ex}

\subsection{Hochschild (co)homology and Tate-Hochschild (co)homology}

\subsubsection{Hochschild (co)homology for algebras}\

Hochschild introduced the cohomology theory of  associative algebras in \cite{Hoc45}. Given a $k$-algebra $A$, its Hochschild cohomology groups are defined as 
\[
\mathrm{HH}^n(A,A)\cong
\mathrm{Ext}^n_{A^e}(A, A),
\]
and its Hochschild homology groups are defined as
\[
\mathrm{HH}_n(A,A)\cong
\mathrm{Tor}_n^{A^e}(A, A),
\]
where $n\geq 0$ and $A^e=A \otimes A^{\mathrm{op}}$ is the enveloping algebra of $A$. There exists a projective resolution of $A$ as $A^e$-module, the so called  \textbf{normalized bar resolution} $\mathrm{Bar}_*(A)$  which is given by 
\[
\mathrm{Bar}_{-1}=A,
\]
\[
\mathrm{Bar}_n(A)=A\otimes \bar{A}^{\otimes n}\otimes A,\quad n\ge 1,
\]
where $\bar{A}=A/(k\cdot 1_A)$, that is,
\begin{eqnarray*}
\mathrm{Bar}_*(A) \colon \cdots \to A\otimes
\bar{A}^{\otimes n}\otimes A\xto{d_{n}} A\otimes
\bar{A}^{\otimes n-1}\otimes A\to \cdots \to A\otimes
\bar{A}\otimes A\xto{d_1}
A^{\otimes 2}\xto{d_0} A,
\end{eqnarray*}
where the map $d_0: A\otimes A\to A$ is the multiplication of $A$, and for $n\ge 1$,
\begin{eqnarray*}
d_n(a_0\otimes \overline{a_1}\otimes\cdots\otimes\overline{a_n}\otimes a_{n+1})&=& a_0a_1\otimes \overline{a_2}\otimes\cdots\otimes\overline{a_n}\otimes a_{n+1}\\
&+&\sum_{i=1}^{n-1}(-1)^{i} a_0 \otimes\overline{a_1}\otimes
\cdots\otimes \overline{a_{i-1}}\otimes \overline{a_ia_{i+1}}\otimes \overline{a_{i+2}}\otimes \cdots\otimes \overline{a_n}\otimes a_{n+1}\\
&+&(-1)^n a_0\otimes \overline{a_1}\otimes\cdots\otimes\overline{a_{n-1}}\otimes a_n a_{n+1}.
\end{eqnarray*}

For convenience, we write $\overline{a_{i,j}}:=\overline{a_i}\otimes\overline{a_{i+1}}\otimes\cdots\otimes\overline{a_j}\ (i\le j)$, when $n=0$, $\bar{A}^n:=k$.

The \textbf{Hochschild cochain complex} is $C^*(A,A)=\mathrm{Hom}_{A^e}({\mathrm{Bar}}_*(A), A)$. Note that
\[
C^n(A,A)=\mathrm{Hom}_{A^e}(A\otimes
{\bar{A}}^{\otimes n}\otimes A, A)\cong\mathrm{Hom}({\bar{A}}^{\otimes n}, A)
\]
for each $n\geq 1$. We also identify $C^0(A,A)$ with $A$. Thus, $C^*(A,A)$ has the following form:
\[
C^*(A,A) \colon A \xto{\delta^0} \mathrm{Hom}({\bar{A}}, A) \to \cdots \to \mathrm{Hom}({{\bar{A}}}^{\otimes n}, A)
\xto{\delta^n} \mathrm{Hom}({\bar{A}}^{\otimes (n+1)}, A)\to \cdots .
\]
It is not difficult to give the definition of $\delta^{*}$, $\delta^0: A\to \mathrm{Hom}(\bar{A},A)$ is defined as follows: 
\[
\delta^0(x)(\bar{a})=ax-xa,\ x\in A,\ \overline{a}\in \bar{A}.
\]
For any $f$ in $\mathrm{Hom}({\bar{A}}^{\otimes n}, A),\ n\ge 1$, the map $\delta^n(f)$ is defined by sending $\overline{a_{1,n+1}}$ to
\[
\delta^n(f)(\overline{a_{1,n+1}})= a_1 \cdot f(\overline{a_{2,n+1}})+\sum_{i=1}^n(-1)^{i}f(\overline{a_{1,i-1}}\otimes \overline{a_ia_{i+1}}\otimes \overline{a_{i+1,n+1}})+(-1)^{n+1}f(\overline{a_{1,n}})a_{n+1}.
\]

Recall that the \textbf{Hochschild chain complex} $(C_*(A,A),\partial_*)$ is defined as follows:
\[
C_{n}(A, A)=A \otimes _{A^{e} }\mathrm{Bar}_{n}(A) \simeq A \otimes \bar{A}^{\otimes n},  n \ge 0,
\]
and, for $n\ge 2$, the differential $\partial_n:A \otimes \bar{A}^{\otimes n}\to A \otimes \bar{A}^{\otimes n-1}$ sends $a_{0} \otimes \overline{a_{1, n}}$ to
\[
a_{0} a_{1} \otimes \overline{a_{2, n}}+\sum_{i=1}^{n-1}(-1)^{i} a_{0} \otimes \overline{a_{1, i-1}} \otimes \overline{a_{i} a_{i+1}} \otimes \overline{a_{i+2, n}}+(-1)^{n} a_{n} a_{0} \otimes \overline{a_{1, n-1}},
\]
and in degree $n=1$, the differential $\partial_1: A \otimes \bar{A} \rightarrow A$ is given by
\[
\partial_{1}\left(a_{0} \otimes \overline{a_{1}}\right)=a_{0} a_{1}-a_{1} a_{0}  \text {( for } a_{0} \in A \text { and } \overline{a_{1}} \in \bar{A} \text { ). }
\]

\subsubsection{Tate-Hochschild cohomology}\label{subsubsec: Tate-Hoch}\

According to \cite{Buch86}, in this section we first recall the definition of the $n$-th Tate–Hochschild cohomology group $\widehat{\mathrm{HH}}^{n}(A, A)$ of a self-injective algebra $A$.

\begin{Prop}\label{Prop:Tate-Hoch of self-injective}\cite[Corollary 6.4.1]{Buch86}
Let $A$ be a self-injective algebra. Denote $\mathrm{Hom}_{A^{e}}\left(A, A^{e}\right)$ by $A^{\vee}$. Then
\begin{itemize}
\item [(i)] $\widehat{\mathrm{HH}}^{n}(A, A) \simeq \mathrm{HH}^{n}(A, A)$ for all $n>0$,

\item [(ii)]  $\widehat{\mathrm{HH}}^{n}(A, A) \simeq \mathrm{HH}_{-n-1}\left(A^{\vee}, A\right)$ for all $n<-1$,

\item [(iii)] $\widehat{\mathrm{HH}}^{0}(A, A) \simeq \underline{\mathrm{Hom}}_{A^{e}}(A, A)$, $\widehat{\mathrm{HH}}^{-1}(A, A) \simeq \underline{\mathrm{Hom}}_{A^{e}}\left(A, \Omega_{A^{e}}(A)\right)$, and there is an exact sequence
\[
0 \rightarrow \widehat{\mathrm{HH}}^{-1}(A, A) \rightarrow A^{\vee} \otimes _{A^{e}} A \xrightarrow{\sigma} \mathrm{Hom}_{A^{e}}(A, A) \rightarrow \widehat{\mathrm{HH}}^{0}(A, A) \rightarrow 0,
\]
where $\sigma$ is given by $\sigma(f \otimes a)\left(a^{\prime}\right)=f\left(a^{\prime}\right) \cdot a$ for $a, a^{\prime} \in A$ and $f \in A^{\vee}$. Here  $\underline{\mathrm{Hom}}_{A^{e}}(-,-)$ denotes the homomorphism space in the stable category $A^{e}$-Mod and $\Omega_{A^{e}}$ is the syzygy functor over $A^{e}$-Mod.
\end{itemize}
\end{Prop}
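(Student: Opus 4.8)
\textbf{Proof proposal for Proposition~\ref{Prop:Tate-Hoch of self-injective}.}

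The plan is to follow Buchweitz's original argument and deduce the four statements from the comparison between the stable module category of $A^{e}$ (equivalently, the singularity category) and the derived category, using that $A$ self-injective forces $A^{e}$ to be self-injective, so that complete resolutions of $A$ over $A^{e}$ exist. The starting point is to fix a complete resolution $\widehat{P}_{\bullet} \to A$ of the $A^{e}$-module $A$: a doubly infinite acyclic complex of projective-injective $A^{e}$-modules which agrees with a projective resolution $P_{\bullet} \to A$ in sufficiently high (co)homological degree. By definition $\widehat{\mathrm{HH}}^{n}(A,A) = H^{n}\bigl(\mathrm{Hom}_{A^{e}}(\widehat{P}_{\bullet}, A)\bigr)$. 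For $n > 0$ the truncation of $\widehat{P}_{\bullet}$ in nonnegative degrees can be taken to coincide with $P_{\bullet}$, and since applying $\mathrm{Hom}_{A^{e}}(-,A)$ to the projective tail does not see the negative part, one gets part (i) immediately; this is the easy bookkeeping step.

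For part (ii) the key input is the standard duality for self-injective algebras: since $A^{e}$ is self-injective, a projective resolution $P_{\bullet} \to A$ dualizes, via $(-)^{\vee} = \mathrm{Hom}_{A^{e}}(-, A^{e})$, to an injective (hence projective) coresolution of $A^{\vee}$, and splicing $P_{\bullet}$ with the shift of this coresolution produces a complete resolution. Concretely one identifies, for a projective $A^{e}$-module $Q$, $\mathrm{Hom}_{A^{e}}(Q, A) \cong A \otimes_{A^{e}} Q^{\vee}$ naturally; applying this in the range where $\widehat{P}_{\bullet}$ is the dual coresolution of $A^{\vee}$ turns $\mathrm{Hom}_{A^{e}}(\widehat{P}_{\bullet}, A)$ in negative degrees into $A \otimes_{A^{e}} (\text{projective resolution of } A^{\vee})$, whose homology is $\mathrm{Tor}^{A^{e}}_{\ast}(A^{\vee}, A)$. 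Tracking the degree shift coming from the splice (the complete resolution is acyclic, so the cokernel at the splice point contributes the $-1$ offset) gives $\widehat{\mathrm{HH}}^{n}(A,A) \cong \mathrm{HH}_{-n-1}(A^{\vee}, A)$ for $n < -1$, which is part (ii).

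For part (iii) one works directly at the splice point. The description $\widehat{\mathrm{HH}}^{0}(A,A) \cong \underline{\mathrm{Hom}}_{A^{e}}(A,A)$ and $\widehat{\mathrm{HH}}^{-1}(A,A) \cong \underline{\mathrm{Hom}}_{A^{e}}(A, \Omega_{A^{e}}(A))$ are instances of the general fact that Tate cohomology in degrees $0$ and $-1$ computes stable homs $\underline{\mathrm{Hom}}_{A^{e}}(A, \Omega^{-n}_{A^{e}}(A))$; this follows from unwinding the complete resolution near degree zero. For the four-term exact sequence, take the beginning of a projective resolution $P_1 \to P_0 \to A \to 0$ and its dual $0 \to A^{\vee} \to P_0^{\vee} \to P_1^{\vee}$; combining the identification $\mathrm{Hom}_{A^{e}}(P_i, A) \cong A \otimes_{A^{e}} P_i^{\vee}$ with $A^{\vee} \otimes_{A^{e}} A$ at the splice, the complex computing $\widehat{\mathrm{HH}}^{\ast}$ around degrees $-1, 0$ becomes $A^{\vee} \otimes_{A^{e}} A \xrightarrow{\sigma} \mathrm{Hom}_{A^{e}}(A,A)$ with $\widehat{\mathrm{HH}}^{-1}$ its kernel and $\widehat{\mathrm{HH}}^{0}$ its cokernel, giving the exact sequence; a direct computation on generators identifies the connecting map with the stated formula $\sigma(f \otimes a)(a') = f(a') \cdot a$.

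The main obstacle is bookkeeping the degree shifts and the identifications $\mathrm{Hom}_{A^{e}}(Q, A) \cong A \otimes_{A^{e}} Q^{\vee}$ coherently across the splice of the complete resolution, so that the indices in (i), (ii), (iii) come out exactly as stated rather than off by one; everything else is formal once the existence of complete resolutions over the self-injective algebra $A^{e}$ is in hand. Since this proposition is quoted verbatim from \cite[Corollary 6.4.1]{Buch86}, in the paper we will simply cite Buchweitz rather than reproduce the argument.
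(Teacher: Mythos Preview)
Your proposal is correct and matches the paper exactly: the paper does not prove this proposition at all but simply states it with the citation to \cite[Corollary 6.4.1]{Buch86}, precisely as you anticipate in your final sentence. Your sketch of Buchweitz's argument via complete resolutions over the self-injective enveloping algebra $A^{e}$ is accurate and well-organized, but none of it appears in the paper.
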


Now we specialize $A$ to be a symmetric algebra.

\begin{Def}
    A finite dimensional $k$-algebra $A$ is a symmetric algebra if there is a symmetric nondegenerate associative bilinear form $\langle\cdot, \cdot\rangle: A \times A \rightarrow k$, or equivalently, $A \simeq A^{*}=\mathrm{Hom}_{k}(A, k)$ as $A$-$A$-bimodules.
\end{Def}

Note that we can choose an $A$-$A$-bimodule isomorphism (denote by $t$) as follows: $t(a)=\langle a, \cdot\rangle$ for $a\in A$. This isomorphism $t$ induces the following isomorphism
\[
\begin{array}{l} 
	t \otimes \mathrm{id}: A  \otimes_{k} A \rightarrow A^{*} \otimes_{k} A \simeq \operatorname{End}_{k}(A) \\
	\quad \quad \quad \quad  a \otimes b\ \mapsto t(a) \otimes b \ \mapsto(x \mapsto t(a)(x) b) .
\end{array}
\]

Following Broué (see \cite{Brou09}), we call the element $(t \otimes \mathrm{id})^{-1}(\mathrm{id}):=\sum_{i} e_{i} \otimes f_{i} \in A \otimes_{k} A$ the Casimir element of $A$. It follows from \cite[Proposition 3.3]{Brou09} that the Casimir element induces an $A$-$A$-bimodule isomorphism
\[
A \simeq A^{\vee}=\operatorname{Hom}_{A^{e}}\left(A, A^{e}\right), \quad a \mapsto \sum_{i} e_{i} a \otimes f_{i},
\]
where we identify $\mathrm{Hom}_{A^{e}}\left(A, A^{e}\right)$ as
\[
(A \otimes A)^{A}:=\left\{\sum_{i} a_{i} \otimes b_{i} \in A \otimes_{k} A | \sum_{i} a a_{i} \otimes b_{i}=\sum_{i} a_{i} \otimes b_{i} a \text { for any } a \in A \right\}. 
\]

According to Proposition~\ref{Prop:Tate-Hoch of self-injective}, we now give a description of the $n$-th Tate-Hochschild cohomology group $\widehat{\mathrm{HH}}^{n}(A, A)$ when $A$ is a symmetric algebra.

\begin{itemize}
    \item [(i)] $\widehat{\mathrm{HH}}^{n}(A, A) \simeq \mathrm{HH}^{n}(A, A)$ for all $n>0$,

    \item [(ii)] $ \widehat{\mathrm{HH}}^{n}(A, A) \simeq  \mathrm{HH}_{-n-1}(A, A)$ for all $n<-1$, 

    \item [(iii)] $\mathrm{HH}_{0}(A, A)=A /[A, A]$, $\mathrm{HH}^{0}(A, A)=Z(A)$, and there is an exact sequence
    \[
    0 \rightarrow \widehat{\mathrm{HH}}^{-1}(A, A) \rightarrow \mathrm{HH}_{0}(A, A) \xrightarrow{\tau}  \mathrm{HH}^{0}(A, A) \rightarrow \widehat{\mathrm{HH}}^{0}(A, A) \rightarrow 0,
    \]
    where the map $\tau$ is defined as follows:
    \[
    \tau: \mathrm{HH}_{0}(A, A)=A /[A, A] \rightarrow \mathrm{HH}^{0}(A, A)=Z(A),\quad  a+[A, A] \mapsto \sum_{i} e_{i} a f_{i}.
    \]
\end{itemize}

Therefore, $\widehat{\mathrm{HH}}^{*}(A, A)$ is a "combination" of the Hochschild cohomology $\mathrm{HH}^{*}(A, A)$ and the Hochschild homology  $\mathrm{HH}_{*}(A, A)$. We can summarize the above results by means of the following diagram:
\[
\xymatrix@R=1.2pc{
	& & & & \mathrm{HH}^0\ar@{->>}[d] &\mathrm{HH}^1\ar@<-0.5ex>@{=}[d] & \mathrm{HH}^2\ar@<-0.5ex>@{=}[d]& \cdots\\
	\cdots & \widehat{\mathrm{HH}}^{-3}\ar@<-1ex>@{=}[d]& \widehat{\mathrm{HH}}^{-2}\ar@<-1ex>@{=}[d] & \widehat{\mathrm{HH}}^{-1}\ar@<-0.5ex>@{_{(}->}[d] & \widehat{\mathrm{HH}}^{0} & \widehat{\mathrm{HH}}^{1} & \widehat{\mathrm{HH}}^{2} & \cdots \\
	\cdots & \mathrm{HH}_{2} & \mathrm{HH}_{1} &\mathrm{HH}_{0}\ar@{->}[uur]_{\tau} & & & &}
\]

In~\cite[Section 6.4]{Wang21}, the author constructed a complex (called \textbf{Tate-Hochschild cochain complex})
\[
\mathcal{D}^*(A, A):=\left(\cdots \xrightarrow{\partial_{2}} C_{1}(A, A) \xrightarrow{\partial_{1}} C_{0}(A, A) \xrightarrow{\tau} C^{0}(A, A) \xrightarrow{\delta^{0}} C^{1}(A, A) \xrightarrow{\delta^{1}} \cdots\right),
\]
to compute $\widehat{\mathrm{HH}}^{*}(A, A)$ for a symmetric algebra $A$, where $\partial_*$ (resp. $\delta^*$) is the differtial of $C_{*}(A, A)$ (resp. $C^{*}(A, A)$), and $\tau(x)=\sum_{i} e_{i} x f_{i}$. Here $\sum_{i}\left(e_{i} \otimes f_{i}\right)$ is Casimir element.

\subsection{Tate-Hochschild cohomology of a group algebra}\label{Section: Tate-Hochschild Cohomology of a Group Algebra}\

The content of this section is based entirely on \cite[Section 2]{LWZ21}; for further details, the reader is referred to that reference.

Let $k$ be a field, $G$ a finite group, and $kG$ the group algebra. Recall that $kG$ is a symmetric algebra with the symmetrizing form:
\[
\langle g, h\rangle=1 , \text{ if } gh=1 \text{ and } \langle g, h\rangle=0 \text{ otherwise}  
\]
for all $g,h\in G$. In particular, $\sum_{g \in G} g^{-1} \otimes g$ is a Casimir element of $kG$. Thus from Section~\ref{subsubsec: Tate-Hoch}, we have the Tate-Hochschild cohomology $\widehat{\mathrm{HH}}^{*}(k G, k G)$ is a "combination" of the Hochschild cohomology $\mathrm{HH}^{*}(k G, k G)$ and the Hochschild homology $\mathrm{HH}_{*}(k G, k G)$.

For convenience, we first introduce the following notation:

For a set $X$, we denote by $k[X]$ the $k$-vector space spanned by the elements in $X$. In particular, we have $kG = k[G]$. Note that $\overline{kG}$ can be identified with the $k$-vector space $k[\overline{G}]$, where $\overline{G} = G \setminus \{1\}$. When $n = 0$, the product $\overline{G}^{\times n}$ is understood as a one-point set, and we set $k[\overline{G}^{\times n}] := k$. For simplicity, we write $(g_1, g_2, \cdots, g_n) \in G^{\times n}$ as $(g_{1,n})$.

The normalized bar resolution $\left(\operatorname{Bar}_{*}(kG), d_*\right)$ of the group algebra $kG$ has the following form (here we write only the maps on the basis elements):
\[
\operatorname{Bar}_{-1}(kG) = kG, \quad \text{and for } n \geqslant 0, \quad \operatorname{Bar}_{n}(kG) = k\left[G \times \overline{G}^{\times n} \times G\right],
\]
\[
d_0\colon \operatorname{Bar}_0(kG) = k[G \times G] \rightarrow kG, \quad (g_0, g_1) \mapsto g_0 g_1,
\]
and for $n \geqslant 1$,
\[
\begin{aligned}
d_n\colon\ &\operatorname{Bar}_n(kG) \rightarrow \operatorname{Bar}_{n-1}(kG), \\
&(g_0, \overline{g_{1,n}}, g_{n+1}) \mapsto \sum_{i=0}^n (-1)^i (g_0, \overline{g_1}, \ldots, \overline{g_i g_{i+1}}, \ldots, \overline{g_n}, g_{n+1}).
\end{aligned}
\]

Here, $k\left[G \times \overline{G}^{\times n} \times G\right]$ denotes the $k$-vector space spanned by the elements of the Cartesian product $G \times \overline{G}^{\times n} \times G$. For convenience, we just write $g$ for its image $\bar{g}$ in $\overline{G}$.

\begin{Def}
   The \textbf{Hochschild cochain complex $\left(C^{*}(k G, k G), \delta^{*}\right)$} is defined as follows:
    \begin{itemize}
        \item [(i)] for $n\ge 0$, 
        \[
        C^{n}(k G, k G)=\mathrm{Hom}_{(k G)^{e}}\left(\mathrm{Bar}_{n}(k G), k G\right) \simeq \mathrm{Hom}_{k}\left(k\left[\overline{G}^{\times n}\right], k G\right) \simeq \mathrm{Map}\left(\overline{G}^{\times n}, k G\right),
        \]
        where $\mathrm{Map}\left(\overline{G}^{\times n}, k G\right)$ denotes the set of maps from $\overline{G}^{\times n}$ to $k G$, and 

        \item [(ii)] the differential is given by
        \[
        \delta^{n}: \mathrm{Map}\left(\overline{G}^{\times n}, k G\right) \rightarrow \mathrm{Map}\left(\overline{G}^{\times(n+1)}, k G\right),  \varphi \mapsto \delta^{n}(\varphi),
	  \]
	 where $\delta^{n}(\varphi)$ sends $g_{1, n+1} \in \overline{G}^{(n+1)}$ to
	 \[
	 g_{1} \varphi\left(g_{2, n+1}\right)+\sum_{i=1}^{n}(-1)^{i} \varphi\left(g_{1, i-1}, g_{i} g_{i+1}, g_{i+2, n+1}\right)+(-1)^{n+1} \varphi\left(g_{1, n}\right) g_{n+1} .
	 \]
	In degree $0$, the differential map $\delta^{0}: k G \rightarrow \mathrm{Map}(\overline{G}, k G)$ is given by
	\[
	\left.\delta^{0}(x)(g)=g x-x g \   (\text{for}\  x \in k G\   \text{and}\  g \in \overline{G}\right) .
	\]
    \end{itemize}
\end{Def}

\begin{Def}
The \textbf{Hochschild chain complex $\left(C_{*}(k G, k G), \partial_{*}\right)$} is defined as follows:
\begin{itemize}
    \item [(i)] for $n\ge 0$, 
    \[
    C_{n}(k G, k G)=k G \otimes_{(k G)^{e}} \mathrm{Bar}_{n}(k G) \simeq k\left[G \times \overline{G}^{\times n}\right],
    \]
    where $k\left[G \times \overline{G}^{\times n}\right]$ denotes the $k$-vector space spanned by the elements in $G \times \overline{G}^{\times n}$, and the differential is given by,

    \item [(ii)] for $n>1$, 
    \[
    \partial_{n}: k\left[G \times \overline{G}^{\times n}\right] \rightarrow k\left[G \times \overline{G}^{\times(n-1)}\right],
    \]
    \[
    \left(g_{0}, g_{1, n}\right) \mapsto\left(g_{0} g_{1}, g_{2, n}\right)+\sum_{i=1}^{n-1}(-1)^{i}\left(g_{0}, g_{1, i-1}, g_{i} g_{i+1}, g_{i+2, n}\right)+(-1)^{n}\left(g_{n} g_{0}, g_{1, n-1}\right).
    \]
    In degree $1$, the differential map $\partial_{1}: k[G \times \overline{G}] \rightarrow k G$ is given by
    \[
    \left.\partial_{1}\left(g_{0}, g_{1}\right)=g_{0} g_{1}-g_{1} g_{0}\    (\text{for}\ g_{0} \in G\  \text{and}\ g_{1} \in \overline{G}\right). 
    \]  
\end{itemize}
\end{Def}

From Section~\ref{subsubsec: Tate-Hoch}, the Tate-Hochschild cohomology $\widehat{\mathrm{HH}}^*(kG, kG)$ can be computed by the Tate-Hochschild complex $\mathcal{D}^{*}(k G, k G)$.

\begin{Def}
    The \textbf{Tate-Hochschild complex $(\mathcal{D}^{*}(k G, k G),d^*)$} is defined as follows:
    \[
	\cdots \xrightarrow{\partial_{2}} k[G \times \overline{G}] \xrightarrow{\partial_{1}} k G \xrightarrow{\tau} k G \xrightarrow{\delta^{0}} \mathrm{Map}(\overline{G}, k G) \xrightarrow{\delta^{1}} \cdots,
	\]
    where we have
    \begin{itemize}
        \item [(i)] $\mathcal{D}^{n}(k G, k G)=C^{n}(k G, k G)$ and $d^n=\delta^n$ for $n\ge 0$,

        \item [(ii)] $\mathcal{D}^{n}(k G, k G)=C_{-1-n}(k G, k G)$ for $n\le -1$ and $d^n=\partial_{-n-1}$ for $n\le -2$,

        \item [(iii)] $d^{-1}=\tau:  k G \rightarrow  kG$ (from degree $-1$ to degree $0$ component) is defined to be the trace map
        \[
	   \tau(x)=\sum_{g \in G} g x g^{-1}.
	  \]
    \end{itemize}
\end{Def}

From \cite{Wang21} and \cite{RW19}, there is an $A_{\infty}$-algebra structure $(m_1,m_2,m_3,\cdots)$ on $\mathcal{D}^*(kG,kG)$ with $m_2=\cup$ defined in \cite{Wang21}.

\begin{Def}
Let $\alpha \in \mathcal{D}^n(k G, k G)$ and $\beta \in \mathcal{D}^m(k G, k G)$. Then the (generalised) cup product $\alpha \cup \beta$ is defined by the following six cases:
	
\emph{Case 1}. $n \ge 0, m \ge 0$. Then $\alpha \in C^n(k G, k G), \beta \in C^m(k G, k G)$, and the cup product $\alpha \cup \beta \in C^{n+m}(k G, k G)=\mathcal{D}^{n+m}(k G, k G)$ is the same as the usual cup product on $C^{*}(k G, k G)$:
\[
\alpha \cup \beta: \overline{G}^{\times n+m} \rightarrow k G,\quad  g_{1, n+m} \mapsto \alpha\left(g_{1, n}\right) \beta\left(g_{n+1, n+m}\right).
\]

\emph{Case 2}. $n \le-1, m \le -1$.  Then $\alpha=(g_{0}, g_{1, s}) \in C_s(k G, k G)$ with $s=-n-1 \ge 0$, $\beta= (h_{0}, h_{1, t}) \in C_{t}(k G, k G)$ with $t=-m-1\ge 0$, and the cup product $\alpha \cup \beta \in C_{s+t+1}(k G, k G)=\mathcal{D}^{n+m}(k G, k G)$ is defined by
\[
\alpha \cup \beta=\sum_{g \in G}\left(g h_{0}, h_{1, t}, g^{-1} g_{0}, g_{1, s}\right) \in k\left[G \times \overline{G}^{\times s+t+1}\right].
\]
	
\emph{Case 3}. $n \ge 0, m \le -1$ and $n+m \le -1$. Then $\alpha \in C^n(k G, k G)$, $\beta=(h_{0}, h_{1, t}) \in C_t(k G, k G)$ with $t=-m-1\ge 0$, and the cup product $\alpha \cup \beta \in C_{t-n}(k G, k G)=\mathcal{D}^{n+m}(kG,kG)$ is the same as the usual cap product $\cap$ (which induces an action of Hochschild cohomology on Hochschild homology):
\[
\alpha \cup \beta=\left(\alpha\left(h_{t-n+1, t}\right) h_{0}, h_{1, t-n}\right) \in k\left[G \times \overline{G}^{\times t-n}\right].
\]
 
\emph{Case 4}. $n \ge 0, m \le -1$ and $n+m \ge 0$. Then $\alpha \in C^n(kG, kG)$, $\beta=(g_0, g_{1, t}) \in C_t(kG, kG)$ with $t=-m-1\ge 0$, and the cup product $\alpha \cup \beta \in C^{n-t-1}(kG, kG)=\mathcal{D}^{n+m}(kG, kG)$ is defined as the following generalized cap product:
\[
\alpha \cup \beta: \overline{G}^{\times n-t-1} \rightarrow kG,\quad  h_{1, n-t-1} \mapsto \sum_{g \in G} \alpha\left(h_{1, n-t-1}, g^{-1}, g_{1, t}\right) g_{0} g.
\]
 
\emph{Case 5}. $n \le -1, m \ge 0$ and $n+m \le -1$. Then $\alpha=(g_{0}, g_{1, s}) \in C_s(kG, kG)$ with $s=-n-1\ge 0$, $\beta \in C^m(kG, kG)$, and the cup product $\alpha \cup \beta \in C_{s-m}(k G, k G)=\mathcal{D}^{n+m}(k G, k G)$ is the following cap product $\cap$ from the right side:
\[
\alpha \cup \beta=\left(g_{0} \beta\left(g_{1, m}\right), g_{m+1, s}\right) \in k\left[G \times \overline{G}^{\times s-m}\right].
\]

\emph{Case 6}. $n \le -1, m \ge 0$ and $n+m \ge 0$. Then $\alpha=(g_{0}, g_{1, s}) \in C_{s}(kG, kG)$ with $s=-n-1\ge 0$, $ \beta \in C^m(kG, kG)$, and the cup product $\alpha \cup \beta \in C^{m-s-1}(kG, kG)=\mathcal{D}^{n+m}(kG, kG)$ is defined as the following generalized cap product from the right side:
\[
\alpha \cup \beta: \overline{G}^{\times m-s-1} \rightarrow k G,\quad  h_{1, m-s-1} \mapsto \sum_{g \in G} g g_{0} \beta\left(g_{1, s}, g^{-1}, h_{1, m-s-1}\right).
\]
\end{Def}

\begin{Rmk}
    Since the sign convention for the cup product $\cup$ used in this paper differs from that in \cite{Wang21}, in order to make the following identity still hold in $\mathcal{D}^{*}(k G, k G)$,
    \[
	\partial(\alpha \cup \beta)=\partial(\alpha) \cup \beta+(-1)^{m} \alpha \cup \partial(\beta),\   \text{for}\ \alpha \in \mathcal{D}^m(kG, kG)\  \text{and}\  \beta \in \mathcal{D}^n(kG, kG),
	\]
	we have to change the signs of the differential in the negative part $\mathcal{D}^{<0}(kG, kG)$. That is, the new differential $\partial'$ on $\mathcal{D}^*(kG, kG)$ is given as follows:
    \[
    \partial'_{m}(\alpha)=\left\{\begin{array}{ll}
 	(-1)^{m+1} \partial_{-m-1}(\alpha) & \text{for}\ \alpha \in \mathcal{D}^m(kG, kG)\ \text{and}\ m<-1,\\
 	\tau(\alpha)& \text{for}\ \alpha \in \mathcal{D}^{-1}(k G, k G), \\
 	\delta^m(\alpha) & \text{for}\ \alpha \in \mathcal{D}^m(kG, kG)\ \text{and}\ m\ge 0.
 \end{array}\right.
    \]
\end{Rmk}

From \cite[Theorem 6.3]{RW19}, it follows that the cup product extends to an $A_{\infty}$-algebra structure $(m_1,m_2,m_3,\cdots)$ on $\mathcal{D}^*(kG,kG)$ with $m_1=\partial'$, $m_2=\cup$ amd $m_i=0$ for $i>3$. The formula for $m_3$ is described as follows:
\begin{itemize}
    \item [(i)] If either $\phi, \varphi, \psi \in C^*(kG,kG)$ or $\phi, \varphi, \psi \in C_*(kG,kG)$, then $m_3(\phi, \varphi, \psi)=0$.

    \item [(ii)] If $\alpha, \beta \in C_*(kG,kG)$ and $\phi \in C^*(kG,kG)$, then $m_3(\alpha, \beta, \phi)=0=m_3(\phi, \alpha, \beta)$.

    \item [(iii)] If  $\alpha \in C_*(kG,kG)$ and $\phi, \varphi \in C^*(kG,kG)$, then $m_3(\phi, \varphi, \alpha)=0=m_3(\alpha, \phi, \varphi)$.

    \item [(iv)] For $\phi \in C^m(kG,kG), \varphi \in C^n(kG,kG)$ and $\alpha=(g_{0}, g_{1},  \cdots, g_r) \in C_r(kG,kG)$,
    \begin{itemize}
        \item [$\bullet$] if $r+2\le m+n$, then $m_3(\phi, \alpha, \varphi) \in C^{m-r+n-2}(kG,kG)$ is defined by
         \begin{align*}
  		m_3(\phi, \alpha, \varphi)(h_{1}, \cdots, h_{m-r+n-2})=\sum_{g \in G} \sum_{j=\max\{1,r+2-m\}}^{\min \{n, r+1\}}(-1)^{m+r+j-1} \quad\quad\quad\quad\quad\quad\quad\quad\quad\quad\quad \\
  	\phi(h_{1, m-r+j-2}, g, g_{j, r}) g_{0} \varphi(g_{1, j-1}, g^{-1}, h_{m-r+j-1, m-r+n-2}). 
      \end{align*}

      \item [$\bullet$] if $r+2>m+n$, then $m_{3}(\phi, \alpha, \varphi)=0$.
    \end{itemize}

    \item [(v)] For $\alpha=(g_0, g_{1, r}) \in C_r(kG, kG), \beta=(h_0, h_{1, s}) \in C_s(kG, kG)$ and $\phi \in C^m(kG, kG)$, 
    \begin{itemize}
        \item [$\bullet$] if $m-1\le r+s$, 
        \begin{align*}
		m_3(\alpha, \phi, \beta)=\sum_{g \in G} \sum_{j=\max\{0,s+1-m\}}^{\min \{s,  r-m+s+1\}}(-1)^{m+r+s-j}\quad\quad\quad\quad\quad\quad\quad\\
        (g_{0}\phi(g_{1, m-s+j-1}, g, h_{j+1, s}) h_{0}, h_{1, j}, g^{-1}, g_{m-s+j, r}),
	\end{align*}

    \emph{Note}: In the original text of \cite{LWZ21}, it is written as
\begin{align*}
	&m_3(\alpha, \phi, \beta) = \sum_{g \in G} \sum_{j=0}^{s} (-1)^{n-j} \\ 
    &\quad\quad\quad\quad\quad\quad(g_{0} \phi(g_{1, m-s+j-1}, g, h_{j+1, s}) h_{0}, h_{1, j}, g^{-1}, g_{m-s+j, r}).
\end{align*}
However, in actual computations, we found a sign error in the exponent of $(-1)$. In all computations in this paper, it has been corrected to $(-1)^{m + r + s - j}$.

\item [$\bullet$] if $m - 1 > r + s$, then $m_3(\alpha, \phi, \beta) = 0$.
    \end{itemize}
\end{itemize}

\subsection{Reminder on cohomology and Tate cohomology of finite groups}\

In this section, we recall some notions on Tate cohomology of finite groups. For the details, we refer the reader to \cite[Chapter VI]{Brown82}.

Let $k$ be a field, $G$ a finite group, and $kG$ the group algebra. Let $M$ be a left $kG$-module. Then the cohomology of $G$ with coefficients in $M$ is defined to be
\[
 \mathrm{H}^p(G, M):=\mathrm{Ext}_{kG}^p(k, M), p \ge 0, 
\]
and the homology of $G$ with coefficients in $M$ is defined to be
\[
 \mathrm{H}_p(G, M)=\mathrm{Tor}_p^{kG}(k, M),  p \ge 0,
 \]
where $k$ is the left trivial $kG$-module in $\mathrm{Ext}_{kG}^p(k, M)$ and is the right trivial $kG$-module in $\mathrm{Tor}_p^{kG}(k, M)$. Note that the complex $P_*:=\mathrm{Bar}_*(kG) \otimes_{kG} k$ is the \emph{standard resolution} of the trivial $kG$-module $k$. So there exist canonical complexes computing group (co)homology.

Recall that the \textbf{group cohomology complex $(C^*(G,  M),  \delta^*)$} is defined as follows:
\[
C^n(G, M)=\mathrm{Hom}_{kG}(\mathrm{Bar}_n(kG) \otimes_{kG} k, M) \simeq \mathrm{Hom}_{kG}(k\left[\overline{G}^{\times n}\right], M) \simeq \mathrm{Map}(\overline{G}^{\times n}, M G),\ for\ n\ge 0,
\]
and the differential is given by
 \[
 \delta^{n}: \mathrm{Map}\left(\overline{G}^{\times n},  M\right) \rightarrow \mathrm{Map}\left(\overline{G}^{\times(n+1)},  M\right),\quad   \varphi \mapsto \delta^{n}(\varphi), 
 \]
 where $\delta^n(\varphi)$ sends $g_{1,n+1}\in \overline{G}^{n+1}$ to:
 \[
 g_{1} \varphi\left(g_{2,  n+1}\right)+\sum_{i=1}^{n}(-1)^{i} \varphi\left(g_{1, i-1}, g_{i} g_{i+1}, g_{i+2, n+1}\right)+(-1)^{n+1} \varphi\left(g_{1,  n}\right).
 \]
 In degree $0$, the differential map $\delta^0: M \rightarrow \mathrm{Map}(\overline{G}, M)$ is given by:
 \[
 \delta^{0}(x)(g)=g x-x \quad (\text {for}\ x \in M \text { and}\ g \in \overline{G}) .
 \]

 We can consider $M$ as a right $kG$-module via  $x \cdot g=g^{-1} x,  x \in M,  g \in G$. Then $\mathrm{Tor}_*^{k G}(k, M) \cong \mathrm{Tor}_*^{k G}(M, k)$, where we use the right $kG$-module $M$ in $\mathrm{Tor}_*^{k G}(M, k)$. Notice that $\mathrm{Tor}_*^{k G}(M, k)$ can be computed by the \textbf{group homology complex $(C_*(G,  M), \partial_*)$}, which is defined as follows:
 \[
 C_n(G, M)=M \otimes_{k G} \mathrm{Bar}_n(k G) \otimes_{k G} k \simeq M \otimes k\left[\overline{G}^{\times n}\right], \quad \text {for } n \ge 0,
 \]
and the differential $\partial_n: M \otimes k\left[\overline{G}^{\times n}\right] \rightarrow M \otimes k\left[\overline{G}^{\times(n-1)}\right],  n \ge 2$ is given by
 \[
 x \otimes g_{1, n} \mapsto x \cdot g_1 \otimes\left(g_{2, n}\right)+\sum_{i=1}^{n-1}(-1)^i x \otimes\left(g_{1, i-1}, g_i g_{i+1}, g_{i+2, n}\right)+(-1)^n x \otimes\left(g_{1, n-1}\right)
 \]
 and in degree $1$, the differential map $\partial_1: M \otimes k[\overline{G}] \rightarrow M$ is given by
 \[
 \partial_{1}\left(x \otimes g_{1}\right)=x \cdot g_{1}-x\quad \left(\text {for } x \in M \text { and } g_{1} \in \overline{G}\right)
 \]

 Let $U$ be any left $kG$-module, we recall the definition in \cite{LWZ21} of the \textbf{Tate cochain complex $(\widehat{C}^*(G, U), \delta'_n)$} of finite group $G$
\begin{itemize}
    \item [(i)] $\widehat{C}^{\ge 0}(G, U)=C^*(G, U),\quad \delta'_{\ge 0}=\delta^n$.

    \item [(ii)] For each $n\le -1$ (let $s=-n-1\ge 0$), 
    \[
    \widehat{C}^{n<0}(G, U)=C_*(G, U)
    \]
    and the differential is given by $\delta'_n=\partial_{s}$ for all $s\ge 1$.

    \item [(iii)] For $n=1$ (or $s=0$), the differential $\delta'_{-1}: C_0(G, U)=U \rightarrow U=C^0(G, U)$ is given by $u\mapsto (\sum_{g \in G} g) u$ for $u \in U$.
\end{itemize}

\subsection{Additive decomposition of the Tate-Hochschild cohomology at the complex level}\label{section:additive-decomposition-TT}\

In \cite{LWZ21}, the authors constructed an additive decomposition at the level of the Tate-Hochschild cochain complex $\mathcal{D}^*(kG, kG)$ of the group algebra $kG$. In the following, we briefly review this construction. The main result of our paper is to obtain a new $A_{\infty}$-algebra structure under this additive decomposition framework.

\emph{Let $X$ be a set of representatives of the conjugacy classes of elements in $G$. For each $x \in X$, define $C_{x} = \left\{g x g^{-1} \mid g \in G\right\}$ as the conjugacy class of $x$, and define the centralizer subgroup as $C_G(x) = \left\{g \in G \mid g x g^{-1} = x\right\}$.}

Fix a decomposition of $G$ into right cosets of $C_G(x)$:
\[
G = C_G(x)\gamma_{1, x} \cup C_G(x)\gamma_{2, x} \cup \cdots \cup C_G(x)\gamma_{n_x, x},
\]
where $n_x$ is the number of elements in the conjugacy class $C_x$. Then the conjugacy class $C_x$ can be written as:
\[
C_x = \left\{ \gamma_{1, x}^{-1} x \gamma_{1, x}, \cdots, \gamma_{n_x, x}^{-1} x \gamma_{n_x, x} \right\}.
\]
We denote $x_i = \gamma_{i, x}^{-1} x \gamma_{i, x}$ and, without loss of generality, let $\gamma_{1, x} = 1$, so $x_1 = x$.

Define:
\[
\mathcal{H}^{x,0} = k[C_x],\text{ and for } n\ge 1,
\]
\[
\mathcal{H}^{x,n} = \left\{ \varphi: \overline{G}^{\times n} \longrightarrow kG \ \middle|\ \varphi(g_1, \dots, g_n) \in k[g_1 \cdots g_n C_x] \subset kG, \ \forall g_1, \dots, g_n \in \overline{G} \right\},
\]
where $g_1 \cdots g_n C_x$ denotes the subset of $G$ obtained by multiplying $g_1 \cdots g_n$ on $C_x$, and $k[g_1 \cdots g_n C_x]$ is the $k$-subspace of $kG$ spanned by this set.

Note that we have the equality $g_1 \cdots g_n C_x = C_x g_1 \cdots g_n$, and hence $k[g_1 \cdots g_n C_x] = k[C_x g_1 \cdots g_n]$. Define $\mathcal{H}^{x,*} = \bigoplus_{n \geq 0} \mathcal{H}^{x,n}$, which forms a subcomplex of $C^*(kG, kG)$, and we have:
\[
C^*(kG, kG) = \bigoplus_{x \in X} \mathcal{H}^{x,*}.
\]

We now recall the definition of the cochain complex $(C^*(C_G(x), k), \delta^n)$:
\[
\widehat{C}^*(C_G(x), k) =
\left(
\cdots \xrightarrow{\partial_{2}} C_{1}(C_G(x), k)
\xrightarrow{\partial_{1}} C_{0}(C_G(x), k)
\xrightarrow{\tau} C^{0}(C_G(x), k)
\xrightarrow{\delta^{0}} C^{1}(C_G(x), k)
\xrightarrow{\delta^{1}} \cdots
\right).
\]
\begin{itemize}
    \item [(i)] $\widehat{C}^{n\ge 0}(C_G(x), k)=C^n(C_G(x), k)= \mathrm{Map}(	\overline{C_G(x)}^{\times n}, k),\ \delta_{\ge 0}=\delta^n$, specificly, the differential is given by 
    \[
    \delta^0(\lambda)(g_1)=0, \text{ for } \lambda\in k \text{ and } g_1\in C_G(x),
    \]
    and for $n\ge 1$, $\delta^n(\varphi)$ map $g_{1, n+1} \in \overline{C_{G}(x)}^{\times {n+1}} $ to
    \[
    \varphi\left(g_{2,  n+1}\right)+\sum_{i=1}^{n}(-1)^{i} \varphi\left(g_{1,  i-1},  g_{i} g_{i+1},  g_{i+2,  n+1}\right)+(-1)^{n+1} \varphi\left(g_{1,  n}\right).
    \]

    \item [(ii)] For each $n\le -1$ (let $s=-n-1\ge 0$), $\widehat{C}^n(C_G(x), k)=C_s(C_G(x), k)= k\left[\overline{C_{G}(x)} ^{\times s}\right]$, and the differential $\delta_n=\partial_s:k\left[\overline{C_{G}(x)} ^{\times s}\right] \rightarrow k\left[\overline{C_{G}(x)} ^{\times (s-1)}\right]$ is given by 
    \[
    g_{1, s} \mapsto (g_{2, s})+\sum_{i=1}^{s-1}(-1)^{i} (g_{1,  i-1},  g_{i} g_{i+1},  g_{i+2,  s})+(-1)^{s} (g_{1, s-1})
    \]
    for all $s\ge 2$, and for $s=1$, the differential $\partial_1: k\left[\overline{C_G(x)}\right] \rightarrow k$ is defined by $\partial_1( g_1)=0$.

    \item [(iii)] For $n=-1$ (or $s=0$), the differential $\delta_{-1}=\tau: C_0(C_G(x), k)=k \rightarrow k=C^0(C_G(x), k)$ is defined by $\tau(1) =|C_G(x)|$.
\end{itemize}

In \cite{LZ16,LWZ21}, a lifting of the additive decomposition of the Hochschild cohomology of the group algebra $kG$ at the complex level was established.

\begin{Lem}\cite[Theorem 6.3]{LZ16}\cite[Theorem 4.3]{LWZ21}\label{Lemma: add Hochschild cohomology} 
Let $k$ be a field and $G$ a finite group. Consider the additive decomposition of Hochschild cohomology algebra of the group algebra $kG$:
\[ 
\mathrm{HH}^{*}(k G,  k G) \simeq \bigoplus_{x \in X} \mathrm{H}^{*}\left(C_{G}(x),  k\right) 
\]
The additive decomposition can lift to a homotopy deformation retract of complexes
\begin{equation*}\xymatrix@C=0.000000001pc{
		\ar@(lu, dl)_-{s^*}&C^*(kG,  kG) \ar@<0.5ex>[rrrrr]^-{\iota^*}&&&&&\bigoplus\limits_{x\in X}  C^*(C_G(x),  k). \ar@<0.5ex>[lllll]^-{\rho^*}}
\end{equation*}
where $\iota^n=\sum_{x\in X} \iota^{x,n}, \ \rho^n=\sum_{x\in X} \rho^{x,n}$, and $s^n=\sum_{x\in X}s^{x,n}$, for $n\ge 0$.

For $n=0$, for any $\alpha_x \in k[C_x]$, the maps $\iota^{x,0}$, $\rho^{x,0}$, $s^{x,0}$ are defined as follows:
\begin{align*}
	&	\iota^{x,0}: \mathcal{H}^{x,0}=k[C_x] \rightarrow k,  
	\quad\alpha_x=\sum_{i=1}^{n_x}\lambda_i x_i \mapsto \lambda_1, \\
	&	\rho^{x,0}:  k  \rightarrow\mathcal{H}^{x,0}=k[C_x], \quad  \lambda \mapsto \alpha_x=\sum_{i=1}^{n_x}\lambda x_i, \\
	&s^{x,0}(\alpha_x)=0
\end{align*}

For $n=1$, the homotopy $s^{x,1}$ is defined by: for $(\varphi_x: \overline{G}  \rightarrow k[gC_x])\in \mathcal H^{x,1}$, define $s^{x,1}(\varphi_x) \in  k[C_x]$ as
\[
s^{x,1}(\varphi_x)= \sum_{i=1}^{n_x} a^1_i x_i,
\]
where $a^1_i$ determined by $\varphi_{x}(\gamma _{i, x})=\sum_{k=1}^{n_x}a^k_i x_k \gamma _{i, x}$.

For $n\ge 1$, the map $\iota^{x,n}$ is given by
\begin{equation*}
	\begin{split}
		\iota^{x,n}: \mathcal{H}^{x,n} \rightarrow C^n(C_G(x),  k),  \quad
		[\varphi_x: \overline{G}^{\times n}\rightarrow kG]\mapsto [\widehat{\varphi}_x:
		\overline{C_G(x)}^{\times n}\rightarrow k],
	\end{split}
\end{equation*}
\[
\text{with }\widehat{\varphi}_x(h_{1, n})=a_{1, x} \text{ where }\varphi_x(h_{1, n})h_n^{-1}\cdots
h_1^{-1}=\sum_{i=1}^{n_x}a_{i, x}x_i\in kC_x.
\]
In other words, $\widehat{\varphi}_x(h_{1,n})$ is just the coefficient of $x$ in $\varphi_x(h_{1, n})h_{n}^{-1}\cdots h_1^{-1}\in kC_x$. The map $\rho^{x,n}$ is given by
\begin{equation*}
	\begin{split}
		\rho^{x,n}:  C^n(C_G(x),  k)&\rightarrow \mathcal H^{x,n} , \quad
		[\widehat{\varphi}_x:
		\overline{C_G(x)}^{\times n}\rightarrow k]\mapsto
		[\varphi_x: \overline{G}^{\times n}\rightarrow kG], \end{split}
\end{equation*}
\[
\text{with }\varphi_x\in\mathcal{H}^{x,n} , \text{ and }\varphi_x(g_{1,n})=\sum_{i=1}^{n_x}\widehat{\varphi}_x(h_{i,1}^x, \cdots, h_{i,n}^x)x_ig_1\cdots g_n,
\]
where $\{h_{i, 1}^x, \cdots,  h_{i, n}^x\}\in \overline{C_G(x)}$ are determined by the sequence $\{g_1, \cdots, g_n\}$ as follows:
\[
\gamma_{i, x}g_1=h_{i,1}^x\gamma_{s_i^1, x},  \quad \gamma_{s_i^1, x}g_2=h_{i,2}^x\gamma_{s_i^2, x},  \quad \cdots,  \quad
\gamma_{s_i^{n-1}, x}g_n=h_{i,n}^x\gamma_{s_i^n, x}.
\]
\emph{We call this process as $\spadesuit$. In this process, the sequence $\{h_{i, 1}^x, \cdots,  h_{i, n}^x\}\in \overline{C_G(x)}^{\times n}$ is determined by the sequence $\{g_1, \cdots,  g_n\}$, $x$ and $i$. We write $\{h_{i, 1}^x, \cdots,  h_{i, n}^x\}=\spadesuit_{x,i}\{g_1, \cdots,  g_n\}$.}

For $n\ge 2$, the homotopy $s^{x,n}$ is given by: for $(\varphi_x: \overline{G}^{\times n} \rightarrow kG)\in \mathcal{H}^{x,n}$, we define $s^{x,n}(\varphi_x) \in \mathcal{H}^{x,n-1}$ as
\[
s^{x,n}(\varphi_x)(g_{1,  n-1})=\sum_{j=0}^{n-1}\sum_{i=1}^{n_x} (-1)^ja^{1}_{i,  j} x_i g_1\cdots g_{n-1}, 
\]
where the coefficients $a_{i,j}^1$ are determined by the following identity (when $j=0$, we set $\gamma_{s_i^0, x}=\gamma_{i, x}$)
\[
\varphi_x(h_{i,1}^x,  \cdots,  h_{i,j}^x,  \gamma_{s_i^j,  x},  g_{j+1},  \cdots, g_{n-1})g_{n-1}^{-1}\cdots g_1^{-1}\gamma_{i, x}^{-1}=\sum_{k=1}^{n_x} a_{i, j}^k x_k,
\]
since we have $h_{i,1}^x h_{i,2}^x \cdots h_{i,j}^x \gamma_{s_i^j,  x}g_{j+1}\cdots g_{n-1}=\gamma_{i,  x}g_1\cdots g_{n-1}$ for any $0\le j\le n-1$.
\end{Lem}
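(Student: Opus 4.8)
The statement to be proved (Lemma~\ref{Lemma: add Hochschild cohomology}) asserts that the explicit maps $\iota^*$, $\rho^*$, $s^*$ assemble into a homotopy deformation retract of complexes between $C^*(kG,kG)$ and $\bigoplus_{x\in X} C^*(C_G(x),k)$. Since both $\iota^*$ and $\rho^*$ respect the block decomposition $C^*(kG,kG)=\bigoplus_{x\in X}\mathcal H^{x,*}$, it suffices to fix $x\in X$ and prove that $(\iota^{x,*},\rho^{x,*},s^{x,*})$ realizes a strong deformation retract of $\mathcal H^{x,*}$ onto $C^*(C_G(x),k)$. The plan is to verify, in turn: (a) $\iota^{x,*}$ and $\rho^{x,*}$ are chain maps; (b) $\rho^{x,*}$ is a section of $\iota^{x,*}$, i.e. $\iota^{x,n}\comp\rho^{x,n}=\Id$; (c) the homotopy identity $\Id-\rho^{x,n}\comp\iota^{x,n}=\delta^{n-1}s^{x,n}+s^{x,n+1}\delta^{n}$ holds; and (d) the side conditions $s^{x,n}\comp s^{x,n+1}=0$, $s^{x,n}\comp\rho^{x,n}=0$, $\iota^{x,n}\comp s^{x,n}=0$ that make the retract strong.

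The computational heart is the process $\spadesuit$. For fixed $x$ and $i$, the defining relations $\gamma_{i,x}g_1=h_{i,1}^x\gamma_{s_i^1,x}$, etc., are exactly the statement that writing $\gamma_{i,x}g_1\cdots g_j$ in the coset decomposition $G=\bigsqcup_j C_G(x)\gamma_{j,x}$ reads off the centralizer element $h_{i,j}^x$ successively; the telescoping identity $h_{i,1}^x\cdots h_{i,n}^x\gamma_{s_i^n,x}=\gamma_{i,x}g_1\cdots g_n$ (noted in the statement) is the key algebraic fact. First I would record the basic combinatorial lemma about $\spadesuit$: how it behaves under merging two adjacent entries $g_j,g_{j+1}\rightsquigarrow g_jg_{j+1}$ (this produces $h_{i,j}^xh_{i,j+1}^{x}$ with the intermediate coset index dropping out) and what happens at the two ends, since these are precisely the moves appearing in the bar differential $\delta$. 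With this lemma, checking that $\iota^{x,*}$ intertwines $\delta^n$ on $\mathcal H^{x,n}$ with $\delta^n$ on $C^*(C_G(x),k)$ becomes a term-by-term matching: the inner summands $\varphi_x(g_{1,i-1},g_ig_{i+1},g_{i+2,n+1})$ map under $\iota$ to $\widehat\varphi_x$ evaluated at the $\spadesuit$-merged tuple, which by the merging lemma is the corresponding inner summand of $\delta(\widehat\varphi_x)$; the leftmost term $g_1\varphi_x(g_{2,n+1})$ becomes (after stripping inverses) a term whose coefficient of $x$ is $\widehat\varphi_x$ of the shifted tuple — here one uses that $\gamma_{1,x}=1$ so $x_1=x$ — and the rightmost term is analogous. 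The chain-map property of $\rho^{x,*}$ is dual and follows either by a direct check or, more cleanly, by showing $\rho^{x,n}$ is adjoint/inverse-like to $\iota^{x,n}$ on the relevant subspaces.

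For (b), one evaluates $\iota^{x,n}\rho^{x,n}(\widehat\varphi_x)$: by definition $\rho^{x,n}(\widehat\varphi_x)(g_{1,n})=\sum_i \widehat\varphi_x(\spadesuit_{x,i}\{g_{1,n}\})x_ig_1\cdots g_n$, and applying $\iota^{x,n}$ extracts the coefficient of $x$ after multiplying by $g_n^{-1}\cdots g_1^{-1}$; since $x_ig_1\cdots g_n\cdot g_n^{-1}\cdots g_1^{-1}=x_i$ and $x_i=x$ only for $i=1$ (the $x_i$ are distinct conjugates), only the $i=1$ term survives, and $\spadesuit_{x,1}$ applied with $\gamma_{1,x}=1$ returns $\{g_{1,n}\}$ itself, giving back $\widehat\varphi_x(g_{1,n})$. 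For (c), the homotopy identity, I would plug the formula for $s^{x,n}$ into $\delta^{n-1}s^{x,n}+s^{x,n+1}\delta^n$ and organize the resulting double sum over $j$ so that consecutive $j$ and $j{+}1$ terms telescope, using the identity $h_{i,1}^x\cdots h_{i,j}^x\gamma_{s_i^j,x}g_{j+1}\cdots = \gamma_{i,x}g_1\cdots$ to re-index; the telescoping should collapse everything except a $j=0$ boundary term, which reproduces $\varphi_x$ itself, and a $j=n$ boundary term, which reproduces $\rho^{x,n}\iota^{x,n}(\varphi_x)$ (recognizable because setting $j=n$ in the $s$-formula uses precisely the $\spadesuit$-tuple defining $\iota$). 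The side conditions in (d) are comparatively easy: $s^{x,0}=0$ kills $s\comp s$ at the bottom and $s\comp\rho$ in degree $0$, while in higher degrees $s^{x,n}\comp\rho^{x,n}=0$ and $\iota^{x,n}\comp s^{x,n}=0$ and $s^{x,n}\comp s^{x,n+1}=0$ follow from the structure of the alternating sums once one observes the relevant coefficient extractions vanish on the image of $\rho$ or that $s$ lands in a kernel.

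The main obstacle I anticipate is the bookkeeping in step (c): managing the two nested summations (the $i$ over cosets, the $j$ over the insertion position) together with the alternating signs, and verifying that the $\spadesuit$-tuples appearing in $\delta^{n-1}s^{x,n}(\varphi_x)$ genuinely coincide with those in $s^{x,n+1}\delta^n(\varphi_x)$ after the merging/boundary moves, so that cancellation is exact. A secondary subtlety is the degree-$0$ and degree-$1$ edge cases, where $\mathcal H^{x,0}=k[C_x]$ is not of the form $\Hom(\overline G^{\times n},kG)$ and $s^{x,1}$ has its own ad hoc definition; these must be checked separately but are short. Everything else is a careful but routine propagation of the definition of $\spadesuit$ through the bar differential.
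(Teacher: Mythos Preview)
The paper does not supply its own proof of this lemma: it is stated with explicit formulas and attributed to \cite[Theorem 6.3]{LZ16} and \cite[Theorem 4.3]{LWZ21}, so there is nothing in the present paper to compare your argument against line by line. What you have written is the natural direct verification one would expect to find in those cited sources, and your plan is correct in outline.

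A few remarks on the details. Your argument for (b) is right: when the input tuple already lies in $\overline{C_G(x)}^{\times n}$ and $i=1$, the relations $\gamma_{1,x}=1$ force every intermediate coset index to stay at $1$, so $\spadesuit_{x,1}$ is the identity and only the $i=1$ summand contributes the coefficient of $x$. For (c), your telescoping strategy is the right one; the point you should make precise is that applying $\delta^{n-1}$ to $s^{x,n}(\varphi_x)$ produces, for each face map, a tuple that either matches a term of $s^{x,n+1}(\delta^n\varphi_x)$ (and cancels) or is a boundary term at $j=0$ or $j=n-1$. The $j=0$ boundary recovers $\varphi_x$ itself (since the inserted element is $\gamma_{i,x}$ with no preceding $h$'s) and the top boundary recovers $\rho^{x,n}\iota^{x,n}(\varphi_x)$, exactly as you say. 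For the side conditions (d), the cleanest observation is that in the image of $\rho^{x,n}$ every coefficient $a_{i,j}^1$ appearing in the definition of $s^{x,n}$ vanishes whenever the inserted $\gamma_{s_i^j,x}$ is nontrivial, and equals the value already captured by $\iota^{x,n}$ when it is trivial; the alternating sum then collapses. Your identification of the low-degree cases ($n=0,1$) as requiring separate ad hoc checks is also correct and those are indeed short.
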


We now discuss the additive decomposition at the level of the Hochschild homology of the group algebra $kG$. We begin by introducing the following notation
\[
\mathcal{H}_{x, 0}=k\left[C_{x}\right], \text{ and for } s\ge 1,
\]  
\[
\mathcal{H}_{x, s}=k\left[\left(g_s^{-1} \cdots g_1^{-1} u, g_{1, s}\right) |\ u \in C_x, g_1, \cdots, g_s \in \overline{G}\right].
\]
Let $\mathcal{H}_{x, *}=\bigoplus_{s \ge 0} \mathcal{H}_{x, s}$. It is easy to verify that $\mathcal{H}_{x, *}$ is a subcomplex of $C_*(k G, k G)$ and $C_*(k G, k G)=\bigoplus_{x\in X} \mathcal{H}_{x, *}$.

\begin{Lem}\cite[Theorem 4.6]{LWZ21}\label{Lemma: add Hochschild homology}
$\mathrm{H}_*(C_G(x), k)$ is the group homology of $C_G(x)$. Consider the additive decomposition of Hochschild homology of the group algebra $kG$:
\[ 
\mathrm{HH}_*(k G,  k G) \simeq \bigoplus_{x \in X} \mathrm{H}_*\left(C_{G}(x), k\right),
\]
this additive decomposition can lift to a homotopy deformation retract of complexes
\begin{equation*}
\xymatrix@C=0.000000001pc{
		\ar@(lu, dl)_-{s_*}&C_*(kG,  kG)=\bigoplus_{x \in X} \mathcal{H}_{x,  *} \ar@<0.5ex>[rrrrr]^-{\rho_*}&&&&&\bigoplus\limits_{x\in X}  C_*(C_G(x),  k), \ar@<0.5ex>[lllll]^-{\iota_*}}
\end{equation*}
where $\iota_n=\sum_{x\in X} \iota_{x,n}, \ \rho_n=\sum_{x\in X} \rho_{x,n}$ and $s_n=\sum_{x\in X}s_{x,n}$, for $n\ge 0$.

For $n=0$, $x=\sum_{i=1}^{n_x}\lambda_i x_i \in k[C_x],\ \lambda\in k$, the injection $\iota_{x,0}$, surjection $\rho_{x,0}$ and homotopy $s_{x,0}$ are given respectively as follows:
\[
\iota_{x,0}(\lambda)=\lambda x_1,
\]
\[
\rho_{x,0}(\sum_{i=1}^{n_x}\lambda_i x_i)=\sum_{i=1}^{n_x}\lambda_i , 
\]
\[
s_{x,0}(\sum_{i=1}^{n_x}\lambda_i x_i)=\sum_{i=1}^{n_x}\lambda_i (\gamma^{-1}_{i, x}x, \gamma_{i, x}).
\]

For $n\ge 1$, the injection $\iota_{x,n}$ is given by
\[
\iota_{x,n}: C_n\left(C_G(x),  k\right)  \stackrel{\sim}{\longrightarrow} \mathcal{H}_{x,  n},\quad\quad\quad\quad\quad\quad 
\]
\[
{\left[\widehat{\alpha}_{x}=\left(h_{1},  \cdots,  h_{n}\right) \in k\left[{\overline{C_{G}(x)}}^{\times n}\right]\right.}  \longmapsto\left[\alpha_{x}=\left(h_{n}^{-1} \cdots h_{1}^{-1}x,  h_{1,  n}\right) \in \mathcal{H}_{x,  n}\right],
\]
and the surjection $\rho_{x,n}$ is given by
\[
\ \quad\quad\rho_{x,n}: \mathcal{H}_{x, n} \longrightarrow C_n\left(C_G(x), k\right), 
\]
\[
{\left[\alpha_x=\left(g_n^{-1} \cdots g_1^{-1} g_0^{-1} x g_0, g_{1, n}\right) \in \mathcal{H}_{x, n}\right] }  \longmapsto\left[\widehat{\alpha}_x=\left(h_{i,1}^x, \cdots, h_{i,n}^x\right) \in k[\overline{C_G(x)} ^{\times n}]\right],
\]
where $h_{i,1}^x, \cdots, h_{i,n}^x\in \overline{C_G(x)}$ are determined by the following sequence:
\[
g_0=h \gamma_{i, x},\quad \gamma_{i, x} g_1=h_{i,1}^x \gamma_{s_i^1, x},\quad  \gamma_{s_i^1, x} g_2=h_{i,2}^x \gamma_{s_i^2, x},\quad  \cdots,\quad   \gamma_{s_i^{n-1}, x} g_n=h_{i,n}^x \gamma_{s_i^n, x}.
\]
The homotopy $s_{x,n}$ (for $n\ge 1$) is given as follows: for $\alpha_x=\left(g_n^{-1} \cdots g_1^{-1} g_0^{-1} x g_0, g_{1, n}\right) \in \mathcal{H}_{x, n}$, 
\begin{align*}
s_{x,n}\left(\alpha_x\right)&=\sum_{j=0}^n(-1)^j\left(g_n^{-1} \cdots g_1^{-1} g_0^{-1} x h,  h_{i,1}^x, \cdots, h_{i,j}^x, \gamma_{s_i^j, x}, g_{j+1}, \cdots, g_n\right) \\
&=\sum_{j=0}^n(-1)^j\left( (h_{i,1}^x \cdots h_{i,j}^x \gamma_{s_i^j,x} g_{j+1} \cdots g_n)^{-1}x,  h_{i,1}^x, \cdots, h_{i,j}^x, \gamma_{s_i^j, x}, g_{j+1}, \cdots, g_n\right) \in \mathcal{H}_{x, n+1},
\end{align*}
when $j=0$, we set $\gamma_{s_i^0, x}=\gamma_{i, x}$.
\end{Lem}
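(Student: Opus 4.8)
The plan is to verify that the data $(\iota_{x,n},\rho_{x,n},s_{x,n})$ assemble into a strong homotopy deformation retract. First I would record the (easy) fact that a basis element $(g_0,g_{1,n})\in C_n(kG,kG)$ belongs to $\mathcal H_{x,n}$ precisely when $g_0g_1\cdots g_n$ is conjugate to $x$, and that this conjugacy class is preserved by $\partial_*$ (the cyclic term $(g_ng_0,g_{1,n-1})$ only conjugates the total product), so that $C_*(kG,kG)=\bigoplus_{x\in X}\mathcal H_{x,*}$ is a decomposition of complexes and all three families of maps respect it. Hence it suffices to fix $x\in X$ and check, for all $n$: (a) $\iota_{x,*}$ and $\rho_{x,*}$ are chain maps; (b) $\rho_{x,n}\iota_{x,n}=\mathrm{id}$; (c) $\mathrm{id}-\iota_{x,n}\rho_{x,n}=\partial_{n+1}s_{x,n}+s_{x,n-1}\partial_n$; and (d) the strong conditions $s_{x,*}^2=0$, $s_{x,*}\iota_{x,*}=0$, $\rho_{x,*}s_{x,*}=0$. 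The whole computation is the Hochschild-homology mirror of the one behind Lemma~\ref{Lemma: add Hochschild cohomology}: one runs the same $\spadesuit$-bookkeeping, but on chains, with the cyclic boundary term of $\partial_*$ taking over the role of the outer multiplications in the cochain differential.

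For (a) I would use that $C_*(C_G(x),k)$ has trivial coefficients, so the leading term of its differential carries no twist, i.e. $\partial_n(h_{1,n})=(h_{2,n})+\sum_{i=1}^{n-1}(-1)^i(h_{1,i-1},h_ih_{i+1},h_{i+2,n})+(-1)^n(h_{1,n-1})$. Then I would check that $\iota_{x,n}(h_{1,n})=(h_n^{-1}\cdots h_1^{-1}x,h_{1,n})$ lies in $\mathcal H_{x,n}$, that after applying $\partial_n$ on $C_*(kG,kG)$ the leading term $(h_n^{-1}\cdots h_1^{-1}xh_1,h_{2,n})$ equals $\iota_{x,n-1}(h_{2,n})$ because $h_1\in C_G(x)$, that each interior term matches $(-1)^i\iota_{x,n-1}$ of the merged sequence, and that the cyclic term equals $(-1)^n\iota_{x,n-1}(h_{1,n-1})$; the analogous longer verification for $\rho_{x,*}$ is the verbatim chain-side translation of the one in Lemma~\ref{Lemma: add Hochschild cohomology}. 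For (b), given $\widehat\alpha_x=(h_{1,n})$ the first component of $\iota_{x,n}(\widehat\alpha_x)$ lies in $C_G(x)$, and because $\gamma_{1,x}=1$ the decomposition process defining $\rho_{x,n}$ never leaves coset index $1$, so $h_{i,j}^x=h_j$ and $\rho_{x,n}\iota_{x,n}=\mathrm{id}$.

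The core is (c). On a basis element $\alpha_x=(g_n^{-1}\cdots g_1^{-1}g_0^{-1}xg_0,g_{1,n})\in\mathcal H_{x,n}$ I would expand $\partial_{n+1}s_{x,n}(\alpha_x)$ and $s_{x,n-1}\partial_n(\alpha_x)$ from the stated formulas; each is a double sum, indexed by the slot $j$ at which $s$ inserts a coset representative and by the differential index. Using the telescoping relation $h_{i,1}^x\cdots h_{i,j}^x\gamma_{s_i^j,x}g_{j+1}\cdots g_n=\gamma_{i,x}g_1\cdots g_n$, I would match, up to sign, the term of $\partial s$ in which the differential merges two entries inside the $s$-inserted block with the term of $s\partial$ in which $s$ inserts after that merge (and similarly for the boundary entries), so that all such pairs cancel. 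Exactly two terms should survive: the one returning $\alpha_x$ (from the cyclic boundary term with no insertion) and the extreme term in which every $g_i$ has been absorbed into coset representatives, which is precisely $-\iota_{x,n}\rho_{x,n}(\alpha_x)$. I would handle $n=0,1$ directly from the small explicit formulas; for instance, for $n=0$ and $\alpha_x=\sum_i\lambda_ix_i$ one gets $\partial_1 s_{x,0}(\alpha_x)=\sum_i\lambda_i(x_i-x)=\alpha_x-(\sum_i\lambda_i)x=\alpha_x-\iota_{x,0}\rho_{x,0}(\alpha_x)$, the $i=1$ summand being killed by normalization since $\gamma_{1,x}=1$.

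For (d), every entry of $\iota_{x,n}(\widehat\alpha_x)$ lies in the stratum where all coset representatives that occur equal $\gamma_{1,x}=1$, so each term of $s_{x,n}\iota_{x,n}(\widehat\alpha_x)$ has an identity entry and vanishes in the normalized complex, giving $s_{x,*}\iota_{x,*}=0$; the same normalization phenomenon applied to the coset-representative slot introduced by $s_{x,*}$ gives $s_{x,*}^2=0$ and $\rho_{x,*}s_{x,*}=0$. Passing to homology then recovers $\mathrm{HH}_*(kG,kG)\simeq\bigoplus_{x\in X}\mathrm H_*(C_G(x),k)$. I expect the main obstacle to be step (c): organising the two double sums and pinning down the cancelling pairs is delicate, since one must track precisely how the auxiliary elements $h_{i,j}^x$ and the coset representatives $\gamma_{s_i^j,x}$ transform each time the Hochschild chain differential merges adjacent group elements or applies its cyclic term.
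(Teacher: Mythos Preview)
The paper does not supply its own proof of this lemma: it is quoted verbatim from \cite[Theorem 4.6]{LWZ21} and used as input for the later constructions. Your proposal is therefore not being compared against an argument in the present paper but against the one in \cite{LWZ21}, and your outline---direct verification that $(\iota_{x,*},\rho_{x,*},s_{x,*})$ is a strong deformation retract, organised as the chain-side mirror of Lemma~\ref{Lemma: add Hochschild cohomology} via the $\spadesuit$-bookkeeping---is exactly the approach taken there. The plan and the individual checks you describe are correct; in particular your normalization arguments for (d) are right (for $s_{x,*}^2=0$ and $\rho_{x,*}s_{x,*}=0$, the key point is that the image of $s_{x,n}$ has $g_0'$ in coset $1$, so the $\spadesuit$-process produces a $1$ either in the newly inserted coset-representative slot or at the position where the old $\gamma_{s_i^j,x}$ sits).
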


We obtain the following Theorem from Lemma~\ref{Lemma: add Hochschild cohomology} and Lemma~\ref{Lemma: add Hochschild homology}.

\begin{Thm}\label{Thm: add tate Hochschild}
The Tate-Hochschild cochain complex $\mathcal{D}^*(kG, kG)$ of the group algebra $kG$ admits an additive decomposition as follows:
 \begin{equation*}
 \xymatrix@C=0.000000001pc{
 \ar@(lu, dl)_-{\hat{s}^*}& \mathcal{D}^*(k G,  k G)\ar@<0.5ex>[rrrrr]^-{\hat{\rho}^*}&&&&& \underset{x \in X}{\bigoplus} \widehat{C}^*\left(C_G(x),  k\right) .\ar@<0.5ex>[lllll]^-{\hat{\iota}^*}}
 \end{equation*}
where, for $m \ge 0$, we have  
 \[
 \hat{\iota} ^m=\rho^m,\  \hat{\iota} ^{-m-1}=\iota_m;\quad  
 {\hat{\rho}} ^m=\iota^m,\  \hat{\rho} ^{-m-1}=\rho_m ; \quad
 \hat{s} ^m=s^m,\  \hat{s} ^{-m-1}=s_m.
 \]
\end{Thm}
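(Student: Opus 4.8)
The plan is to obtain the Tate-Hochschild retract by \emph{gluing} the two homotopy deformation retracts of Lemma~\ref{Lemma: add Hochschild cohomology} and Lemma~\ref{Lemma: add Hochschild homology} along the single map $\tau$ that stitches the Hochschild chain complex to the Hochschild cochain complex inside $\mathcal{D}^*(kG,kG)$. First I would record the compatibility of the trace map $\tau_{kG}\colon C_0(kG,kG)=kG\to kG=C^0(kG,kG)$, $y\mapsto\sum_{g\in G}gyg^{-1}$, with the block decompositions: for $y$ in the conjugacy class $C_x$ every conjugate $gyg^{-1}$ again lies in $C_x$, and in fact $\tau_{kG}(x_j)=|C_G(x)|\sum_{i=1}^{n_x}x_i$. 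Hence $\tau_{kG}$ carries $\mathcal{H}_{x,0}=k[C_x]$ into $\mathcal{H}^{x,0}=k[C_x]$, so the decompositions $C^*(kG,kG)=\bigoplus_{x}\mathcal{H}^{x,*}$ and $C_*(kG,kG)=\bigoplus_{x}\mathcal{H}_{x,*}$ recalled above assemble into a decomposition of complexes $\mathcal{D}^*(kG,kG)=\bigoplus_{x\in X}\widehat{\mathcal{H}}^{x,*}$, where $\widehat{\mathcal{H}}^{x,*}$ denotes the complex $\cdots\to\mathcal{H}_{x,1}\to\mathcal{H}_{x,0}\xrightarrow{\tau}\mathcal{H}^{x,0}\to\mathcal{H}^{x,1}\to\cdots$; the target $\bigoplus_{x}\widehat{C}^*(C_G(x),k)$ is already presented in this shape. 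It therefore suffices to treat each $x\in X$ separately and sum at the end.

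For a fixed $x$, I would define $\hat\iota^{x,*}$, $\hat\rho^{x,*}$, $\hat s^{x,*}$ exactly as in the statement (placing $\rho^{x,m},\iota^{x,m},s^{x,m}$ in degrees $m\ge 0$ and $\iota_{x,m},\rho_{x,m},s_{x,m}$ in degrees $-m-1$) and verify the four defining relations of a strong homotopy deformation retract. In every degree $\ge 1$ and every degree $\le -2$ the relations $\hat\rho\hat\iota=1$, $1-\hat\iota\hat\rho=d\hat s+\hat s d$, $\hat s^2=\hat s\hat\iota=\hat\rho\hat s=0$ are verbatim the corresponding relations of Lemma~\ref{Lemma: add Hochschild cohomology} (respectively Lemma~\ref{Lemma: add Hochschild homology}), since there no map involved crosses the degree $-1/0$ seam. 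The only genuinely new checks live at degrees $-1$ and $0$.

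At the seam, that $\hat\rho$ and $\hat\iota$ are chain maps amounts to the two identities $\iota^{x,0}\circ\tau_{kG}=\tau_{C_G(x)}\circ\rho_{x,0}$ on $k[C_x]$ and $\tau_{kG}\circ\iota_{x,0}=\rho^{x,0}\circ\tau_{C_G(x)}$ on $k$; both follow from $\tau_{kG}(x_j)=|C_G(x)|\sum_i x_i$ and $\tau_{C_G(x)}(1)=|C_G(x)|$ together with the explicit formulas for $\iota^{x,0},\rho^{x,0},\iota_{x,0},\rho_{x,0}$. For the homotopy relation the decisive point is that $\hat s$ \emph{does not cross the seam}: by construction $\hat s^0=s^{x,0}=0$, so the potential cross terms $\tau_{kG}\circ\hat s^0$ (entering $d\hat s$ at $\mathcal{D}^0$) and $\hat s^0\circ\tau_{kG}$ (entering $\hat s d$ at $\mathcal{D}^{-1}$) vanish, and the identity collapses to $1-\rho^{x,0}\iota^{x,0}=s^{x,1}\delta^0$ at $\mathcal{D}^0$ and $1-\iota_{x,0}\rho_{x,0}=\partial_1 s_{x,0}$ at $\mathcal{D}^{-1}$ — precisely the bottom-degree cases of the homotopy identities of the two lemmas, where the one absent term (involving a differential that does not exist below degree $0$) vanishes automatically. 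The same vanishing of $\hat s^0$ forces every straddling composition in $\hat s^2=\hat s\hat\iota=\hat\rho\hat s=0$ to be zero, so these too are inherited from the strong-retract conditions of Lemmas~\ref{Lemma: add Hochschild cohomology} and~\ref{Lemma: add Hochschild homology}. Finally, summing over $x\in X$ yields the asserted retract.

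The \textbf{main obstacle} is exactly this seam analysis: one must be sure that the single differential $\tau$ of $\mathcal{D}^*(kG,kG)$ is intertwined, through the decomposition maps, with the differential $\tau$ of $\widehat{C}^*(C_G(x),k)$, and that the homotopy identity still closes across the degree $-1/0$ boundary. Both points rest on the elementary counting fact that in $\sum_{g\in G}gx_jg^{-1}$ each element of $C_x$ occurs exactly $|C_G(x)|$ times, together with the fortunate circumstance that the cohomological homotopy $s^{x,0}$ is identically zero, so the two retracts interact only through $\tau$ and not through the homotopy. Any signs coming from the sign-twisted differential $\partial'$ on $\mathcal{D}^{<0}(kG,kG)$ occur identically on both sides of each identity and are matched routinely.
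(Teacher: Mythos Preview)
Your proposal is correct and follows the same approach as the paper, which simply states that the theorem is obtained from Lemma~\ref{Lemma: add Hochschild cohomology} and Lemma~\ref{Lemma: add Hochschild homology} without further elaboration. In fact you supply the seam analysis at degrees $-1$ and $0$ (compatibility of the two $\tau$'s and the vanishing of $s^{x,0}$) that the paper leaves implicit, so your argument is strictly more detailed than what appears there.
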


\section{\texorpdfstring{$A_{\infty}$-structure}{A-infinity structure}}\label{section: Ainfty structure}

\subsection{\texorpdfstring{$\widehat{m}_n$ Algorithm}{mn Algorithm}}\label{Section:mn-algorithm}\

Due to the complexity of the graphs corresponding to $PT_n$, in this subsection, we provide a decomposition method for the $PT_n$ graphs. We also explain how to compute the multiplication associated to each $PT_n$ graph via local decompositions, leading to the computation process of $\widehat{m}_n$ under the additive decomposition of the Tate-Hochschild cochain complex of the group algebra.

Specifically, since the higher operations $m_i = 0$ for all $i \ge 4$ on the Tate-Hochschild cochain complex $\mathcal{D}^*(kG, kG)$, the computation of $\widehat{m}_n$ on $\bigoplus_{x \in X} \widehat{C}^*(C_G(x), k)$ only involves analyzing local cases in the $PT_n$ graph with either two or three branches.

We begin by discussing the local graphs with exactly two branches.

\subsubsection{Algorithms on locally two-branched graphs}\

(1) We begin by referring to the classification of planar binary trees introduced in \cite{Li20}, that is, starting from the terminal end, planar binary trees can be classified into two types according to their terminal morphism: $\hat{\rho}^*$ and $\hat{s}^*$. We denote these by $\alpha$-type and $\beta$-type trees, respectively. In particular, the associated branching graphs can be further categorized into the following two distinct forms:
\[
\xymatrix{
	\ar@{-}[rd] & \quad & \ar@{-}[ld] & \quad & \ar@{-}[rd] & \quad & \ar@{-}[ld] \\
	\quad & \cup\ar@{-}[d] ^{\hat{\rho}^*} & \quad & \quad & \quad & \cup\ar@{-}[d]^{\hat{s}^*} & \quad\\
	\quad & \quad & \quad & \quad & \quad & \quad & \quad\\
	\quad & \alpha & \quad & \quad & \quad & \beta & \quad}
\]

(2) In practical computation, we observe that in each of the two local structures described above, the morphisms at the left and the right ends can each be classified into two types:
\[
\hat{\iota}^*:\ \underset{x \in X}{\bigoplus}\widehat{C}^*\left(C_G(x), k\right)\longrightarrow \mathcal{D}^*(k G, k G), 
\]
and identity morphism
\[
\mathrm{id}:\ \mathcal{D}^*(k G, k G)\longrightarrow \mathcal{D}^*(k G, k G).
\]
To distinguish these more precisely, we further refine the classification. For $\alpha$-type structures, we divide them into the following four subclasses,

\begin{tikzcd}
	& & {} \arrow[rdd, phantom] \arrow[rdd, "\hat{\iota}^*", no head] & & {} \arrow[ldd, "\hat{\iota}^*", no head] & & & & {} \arrow[rdd, "id", no head] & & {} \arrow[ldd, "\hat{\iota}^*", no head] & \quad \\
	& & & & & & & & & & & \quad \\
	& & & \cup \arrow[dd, "\hat{\rho}^*", no head] & & & & & & \cup \arrow[dd, "\hat{\rho}^*", no head] & & \quad \\
	& & & & & & {,} & & & & &, \\
	& & & {\alpha_{1, 1}} & & & & & & {\alpha_{0, 1}} & &               
\end{tikzcd}

\begin{tikzcd}
	& & {} \arrow[rdd, phantom] \arrow[rdd, "\hat{\iota}^*", no head] & & {} \arrow[ldd, "\mathrm{id}", no head] & & & & {} \arrow[rdd, "\mathrm{id}", no head] & & {} \arrow[ldd, "\mathrm{id}",  no head] & & \quad \\
	& & & & & & & & & & & & \quad \\
	& & & \cup \arrow[dd, "\hat{\rho}^*", no head] & & & & & & \cup \arrow[dd, "\hat{\rho}^*",  no head] & & & \quad \\
	& & & & & & {, } & & & & & . \\
	& & & {\alpha_{1, 0}} &  &  &  &  &  & {\alpha_{0, 0}} & & &      
\end{tikzcd}

Similarly, if we replace position $\hat{\rho}^*$ with $\hat{s}^*$ while keeping everything else unchanged, we can then obtain $\beta_{1, 1}$, $\beta_{0, 1}$, $\beta_{1, 0}$ and $\beta_{0, 0}$.

(3) For type $\alpha$ and type $\beta$, to distinguish whether the input and output elements involved in the correspondence operation lie in cohomology or homology, we refine the notation by adding a sign $+$ and $-$:
\[
\text{add } + \text{ if the corresponding element lies in } \underset{x \in X}{\bigoplus} \widehat{C}^{\ge 0} \left(C_G(x), k\right) \text{ or } \mathcal{D}^{\ge 0}(kG, kG),
\]
\[
\text{add } - \text{ if the corresponding element lies in } \underset{x \in X}{\bigoplus} \widehat{C}^{<0}\left(C_G(x), k\right) \text{ or } \mathcal{D}^{< 0}(kG, kG).
\]
For type $\alpha$, the sign $+$ (resp. $-$) of last position means that the result obtained from type $\alpha$ operation is in $\underset{x \in X}{\bigoplus}\widehat{C}^{\ge 0}\left(C_G(x), k\right)$ (resp. $\underset{x \in X}{\bigoplus}\widehat{C}^{<{0}}\left(C_G(x), k\right)$). For type $\beta$, the sign $+$ (resp. $-$) of last position means that the result obtained from type $\alpha$ operation is in  $\mathcal{D}^{\ge 0}(kG, kG)$ (resp. $\mathcal{D}^{<0}(kG, kG)$).

According to the above classification rule, type $\alpha$ case in (2) can be further divided into $\alpha_{i\pm, j \pm , \pm }$ ($i,j\in\{0,1\}$) and type $\beta$ case in (2) can be further divided into $\beta_{i\pm, j \pm , \pm }$ ($i,j\in\{0,1\}$). For example $\alpha_{1+, 1+, +}$, $\alpha_{1+, 0-, -}$ are as follows respectively:

\[
\begin{tikzcd}
	{\underset{x \in X}{\bigoplus}\widehat{C}^{\ge 0}\left(C_G(x), k\right)} \arrow[rdd, phantom] \arrow[rdd, "\hat{\iota}^*", no head] & & {\underset{x \in X}{\bigoplus} \widehat{C}^{\ge 0}\left(C_G(x), k\right)} \arrow[ldd, "\hat{\iota^*}", no head]  \\
	& &  \\
	& \cup \arrow[dd, "\hat{\rho}^*", no head] \\
	& & & & {,}  \\
	& {\alpha_{1+, 1+, +}} & {\underset{x \in X}{\bigoplus}\widehat{C}^{\ge 0}\left(C_G(x), k\right)}   
\end{tikzcd}
\]
\[
\begin{tikzcd}
	{\underset{x \in X}{\bigoplus}\widehat{C}^{\ge 0}\left(C_G(x), k\right)} \arrow[rdd, phantom] \arrow[rdd, "\hat{\iota}^*", no head] & & {\mathcal{D}^{< 0}(kG, kG)} \arrow[ldd, "\mathrm{id}", no head] \\
	& & \\
	& \cup \arrow[dd, "\hat{\rho}^*", no head] \\
	& & & & {,} \\
	& {\alpha_{1+, 0-, -}} & {\underset{x \in X}{\bigoplus}\widehat{C}^{<0}\left(C_G(x), k\right)} 
\end{tikzcd}
\]

For example, $\beta_{1-, 1-, -}$, $\beta_{0-, 1+, +}$ are described as follows respectively:
\[
\begin{tikzcd}
	{\underset{x \in X}{\bigoplus}\widehat{C}^{<0}\left(C_G(x), k\right)} \arrow[rdd, phantom] \arrow[rdd, "\hat{\iota}^*", no head] & & {\underset{x \in X}{\bigoplus} \widehat{C}^{<0} \left(C_G(x), k\right)} \arrow[ldd, "\hat{\iota}^*", no head] \\
	& & \\
	& \cup \arrow[dd, "\hat{s}^*", no head] \\
	& & & & {,} \\
	& {\beta_{1-, 1-, -}} & {\mathcal{D}^{< 0}(k G,  k G)}
\end{tikzcd}
\]
\[
\begin{tikzcd}
	{\mathcal{D}^{< 0}(kG, kG)} \arrow[rdd, phantom] \arrow[rdd, "\mathrm{id}", no head] & & {\underset{x \in X}{\bigoplus}\widehat{C}^{\ge 0}\left(C_G(x), k\right)} \arrow[ldd, "\hat{\iota}^*", no head] \\
	& & \\
	& \cup \arrow[dd, "\hat{s}^*", no head] \\
	& & & & {.} \\
	& {\beta_{0-, 1+, +}} &{\mathcal{D}^{\ge 0}(kG, kG)} 
\end{tikzcd}
\]

\subsubsection{Algorithms on locally three-branched graphs}\

we provide a classification of locally three-branched trees, along with their correspondence notations, following the approach used for locally two-branched trees.

(1) We still use the notation as last section: denote by $\alpha$-type and $\beta$-type for the trees with terminal morphism $\hat{\rho}^*$ and $\hat{s}^*$, respectively.

(2) The classification of $0$ and $1$ are also the same as before: we further refine the classification according to the morphism at each branch. Specifically, they are divided into $8$ types $\alpha_{i, j, k}$ ($i, j, k=1$ or $0$), and $\beta_{i, j, k}$ ($i, j, k=1$ or $0$). It should be noticed that the labels $i$, $j$ and $k$ indicate the branches order from left to right. For example,
\[
\begin{tikzcd}
{} \arrow[rrdd, phantom] \arrow[rrdd, "\mathrm{id}", no head] & & {} & & {} \arrow[lldd, "\hat{\iota}^*", no head] & & {} \arrow[rrdd, "\hat{\iota}^*", no head] & & {} & & {} \arrow[lldd, "\mathrm{id}", no head] \\
& & \\
& & m_3 \arrow[dd, "\hat{\rho}^*", no head] \arrow[uu, "\hat{\iota}^*", no head] & & & &  & & m_3 \arrow[dd, "\hat{s}^*", no head] \arrow[uu, "\mathrm{id}", no head] \\
& & & & & {,} \\
& & {\alpha_{0, 1, 1}} & & & & & & {\beta_{1, 0, 0}}            
\end{tikzcd}
\]

(3) The definition of sign $+$ and $-$ are same as before: we futher refine the classification, denote as $\alpha_{i \pm, j \pm, k \pm,  \pm }$ ($i, j, k\in\{0,1\}$), and $\beta_{i \pm, j \pm, k \pm, \pm}$ ($i, j, k\in \{0,1\}$). For example,
\[
\begin{tikzcd}
{{\mathcal{D}^{\ge 0}(kG, kG)}} \arrow[rrdd, phantom] \arrow[rrdd, "\mathrm{id}", no head] & & {{\underset{x \in X}{\bigoplus}\widehat{C}^{<0}\left(C_G(x), k\right)}} & & {{\underset{x \in X}{\bigoplus}\widehat{C}^{\ge 0}\left(C_G(x), k\right)}} \arrow[lldd, "\hat{\iota}^*", no head] \\
& & \\
& & m_3 \arrow[dd, "\hat{\rho}^*", no head] \arrow[uu, "\hat{\iota}^*", no head] \\
& & & & & {,} \\
& & {\alpha_{0+, 1-, 1+, +}} & & {{\underset{x \in X}{\bigoplus}\widehat{C}^{\ge 0}\left(C_G(x), k\right)}}          
\end{tikzcd}
\]
\[
\begin{tikzcd}
{{\underset{x \in X}{\bigoplus}\widehat{C}^{<0}\left(C_G(x), k\right)}} \arrow[rrdd, phantom] \arrow[rrdd, "\hat{\iota}^*", no head] & & {{\mathcal{D}^{\ge 0}(kG, kG)}} & & {{\mathcal{D}^{<0}(kG, kG)}} \arrow[lldd, "\mathrm{id}", no head] \\
& & \\
& & m_3 \arrow[dd, "\hat{s}^*", no head] \arrow[uu, "\mathrm{id}", no head] \\
& & & & & {.} \\
& & {\beta_{1-, 0+, 0-, -}} & & {{\mathcal{D}^{<0}(kG, kG)}}
\end{tikzcd}
\]

Now we recall from Section~\ref{Section: Tate-Hochschild Cohomology of a Group Algebra} the definition of $m_3$ in $\mathcal{D}^{*}(kG, kG)$, it is $0$ except for the following two cases:
\begin{itemize}
    \item [(i)] For $\phi \in C^m(kG,kG), \varphi \in C^n(kG,kG)$ and $\alpha=(g_{0}, g_{1},  \cdots, g_r) \in C_r(kG,kG)$, if $r+2\le m+n$, then $m_3(\phi, \alpha, \varphi) \in C^{m-r+n-2}(kG,kG)$ is defined by
         \begin{align*}
  		m_3(\phi, \alpha, \varphi)(h_{1}, \cdots, h_{m-r+n-2})=\sum_{g \in G} \sum_{j=\max\{1,r+2-m\}}^{\min \{n, r+1\}}(-1)^{m+r+j-1} \quad\quad\quad\quad\quad\quad\quad\quad\quad\quad\quad \\
  	\phi(h_{1, m-r+j-2}, g, g_{j, r}) g_{0} \varphi(g_{1, j-1}, g^{-1}, h_{m-r+j-1, m-r+n-2}). 
      \end{align*}

      \item [(ii)] For $\alpha=(g_0, g_{1, r}) \in C_r(kG, kG), \beta=(h_0, h_{1, s}) \in C_s(kG, kG)$ and $\phi \in C^m(kG, kG)$, if $m-1\le r+s$, 
        \begin{align*}
		m_3(\alpha, \phi, \beta)=\sum_{g \in G} \sum_{j=\max\{0,s+1-m\}}^{\min \{s,  r-m+s+1\}}(-1)^{m+r+s-j}\quad\quad\quad\quad\quad\quad\quad\\
        (g_{0}\phi(g_{1, m-s+j-1}, g, h_{j+1, s}) h_{0}, h_{1, j}, g^{-1}, g_{m-s+j, r}),
	\end{align*}
\end{itemize}

\subsubsection{\texorpdfstring{Computation for $\widehat{m}_n$}{Computation for mn}}\

In the previous section, we discussed all classification cases of locally branched graphs involved in the computation of $\widehat{m}_n$. Based on this, a planar $n$-ary tree can be decomposed into a composition of $\alpha_{i\pm, j\pm, k\pm, \pm}$, $\alpha_{i\pm, j\pm,  \pm}$, $\beta_{i\pm, j\pm, k\pm, \pm}$ and $\beta_{i\pm, j\pm, \pm}$. The corresponding algorithmic flowchart is given below.
\begin{figure}[ht]
\[
\begin{tikzcd}[row sep=1.2em, 
               column sep=0.01em,  
               cells={nodes={font=\small}}  
               ]
& & & & & a_5 \arrow[rd, "\hat{l}^*", no head] & & a_6 \arrow[ld, "\hat{l}^*", no head] & & & & & & \\
& & a_1 \arrow[rd, "\hat{l}^*", no head] & a_2 \arrow[d, "\hat{l}^*", no head] & a_3 \arrow[ld, "\hat{l}^*", no head] & a_4 \arrow[d, "\hat{l}^*", no head] & m_2 & & & & & & & \\
& & & m_3 \arrow[rd, "\hat{s}^*", no head] & & m_2 \arrow[ld, "\hat{s}^*", no head] \arrow[ru, "\hat{s}^*", no head] & & & & & & & & \\
{} \arrow[rrdd, phantom] & \widehat{m}_n & =\sum_{PT_n}\pm & & m_2 \arrow[d, "\hat{p}^*", no head] & & & & & & & & & \quad \\ 
& & & & {} & & {a_5, a_6} \arrow[d, "\text{input}", Rightarrow] & & & & & & & \quad \\
& & =\sum_{PT_n}\pm & {a_1, a_2, a_3} \arrow[d, "\text{input}", Rightarrow] & & a_4 \arrow[d, "\text{input}", Rightarrow] & {\beta_{1\pm, 1\pm, \pm}} \arrow[ld, "\text{input result}", Rightarrow] & & & & & & & \quad \\
& & & {\beta_{1\pm, 1\pm, 1\pm, \pm}} \arrow[rd, "\text{input result}", Rightarrow] & & {\beta_{1\pm, 0\pm, \pm}} \arrow[ld, Rightarrow] & & & & & & & & .\\
& & & & {\alpha_{0\pm, 0\pm, \pm}} \arrow[d, "\text{output result}", Rightarrow] & & & & & & & & & \\
& & & & {} & & & & & & & & &      
\end{tikzcd}
\]
\caption{The algorithmic flowchart corresponding to a planar $n$-ary tree.}
\label{figure:n-ary-algorithm}
\end{figure}
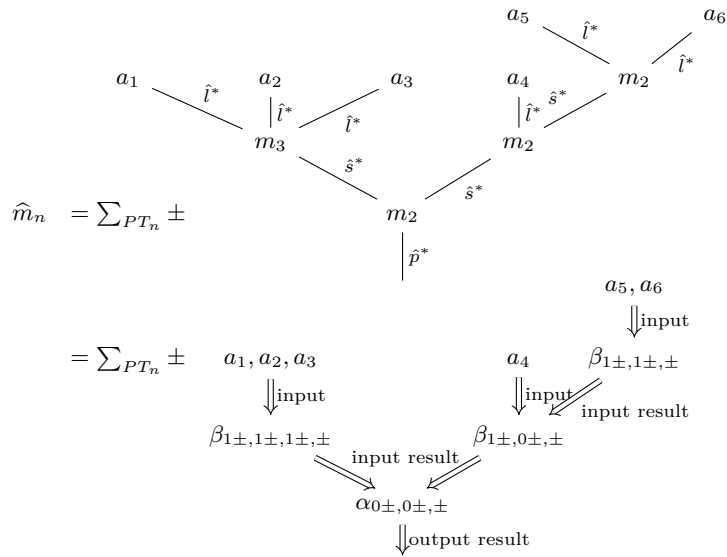

We now give a concrete example to demonstrate the above algorithm.

\begin{Ex}
For the following $PT_4$ graph, we calculate a term in the correspondence multiplication $\widehat{m}_4$, with $n>0, m>0, p>0, q>0$ and $n+m\ge p-q-2$:
\[
\begin{tikzcd}	[	row sep=1.2em,  
	column sep=0.01em,  
	cells={nodes={font=\small}} 
	]
{a_1\in{C_n\left(C_G(x), k\right)}} & & {a_2\in{C_m\left(C_G(y), k\right)}} & & & & & & & & & & & & \\
& \cup \arrow[rrdd, phantom] \arrow[rrdd, "\hat{s}^*", no head] \arrow[lu, "\hat{l}^*", no head] \arrow[ru, "\hat{l}^*", no head] & & {{\varphi_3} \in{C^p\left(C_G(u), k\right)}} & & {a_4\in{C_q\left(C_G(z), k\right)}} \arrow[lldd, "\hat{l}^*", no head] & & & & & & & & & \quad \\
& & & & & & & & & & & & & & \quad \\
& & & m_3 \arrow[dd, "\hat{\rho^*}", no head] \arrow[uu, "\hat{l}^*"', no head] & & & & & & & & & & & \quad \\
& & & & & & & & & & & & & &. \\
& & & {} & & & & & & & & & & &   
\end{tikzcd}
\]

There are one two-branched and one three-branched structures in the above graph, we firstly compute by operation $\beta_{1-, 1-, -}$, and then by operation $\alpha_{0-, 1+, 1-, -}$. Specificly, it can be demonstrated as follows:
\[
\begin{tikzcd}
 {a_1, a_2} \arrow[d, "\text{input}", Rightarrow] & {\varphi_3} \arrow[dd, "\text{input}", Rightarrow] & a_4 \arrow[ldd, "\text{input}", Rightarrow] \\
{\beta_{1-, 1-, -} }\arrow[rd, "\text{input result}", Rightarrow]  \\
& {\alpha_{0-, 1+, 1-, -}} \arrow[d,  Rightarrow] \\
& \text{output}
\end{tikzcd}
\]
\end{Ex}

In fact, all the output results of operations $\alpha_{1\pm, 1\pm, \pm}$ are all possible cases of $\widehat{m}_2$. To compute $A_{\infty}$-structure, it is also necessary to consider operations $\alpha_{i\pm, j\pm, \pm}$, $\beta_{i\pm, j\pm, \pm}$, $\alpha_{i+, j-, k+, +}$, $\alpha_{i-, j+, k-, -}$, $\beta_{i+, j-, k+, +}$ and $\beta_{i-, j+, k-, -}$. In the following sections, we provide the computations for $\alpha_{1 \pm , 1 \pm ,  \pm }$, $\alpha_{i+ , j + ,  + }$, $\alpha_{i- , j - ,  -}$, $\beta_{1 \pm , 1 \pm ,  \pm }$, $\alpha_{i+ , j + ,  + }$ and $\alpha_{i- , j - ,  -}$. The reader may refer to these examples as a basis for handling the remaining cases.

\subsection{\texorpdfstring{Computations for $\widehat{m}_2$ ($=\alpha_{1 \pm, 1 \pm, \pm}$) and $\beta_{1 \pm, 1 \pm, \pm}$}{Computations for alpha and beta}}\label{Section: computation for alpha and beta}\

Recall from Section~\ref{section:additive-decomposition-TT} the homotopy retract of the additive decomposition of the Tate-Hochschild cohomology at the complex level
\begin{equation*}\xymatrix@C=0.000000001pc{
\ar@(lu, dl)_-{\hat{s}^*} & \mathcal{D}^*(kG, kG)\ar@<0.5ex>[rrrrr]^-{\hat{\rho}^*} & & & & & \underset{x \in X}{\bigoplus} \widehat{C}^*\left(C_G(x),  k\right), \ar@<0.5ex>[lllll]^-{\hat{\iota}^*}}
\end{equation*}
where for $m \ge 0$, we have
\[
\hat{\iota}^m = \rho^m,\  \hat{\iota}^{-m-1} = \iota_m ;\quad
\hat{\rho}^m = \iota^m,\  \hat{\rho}^{-m-1} = \rho_m ; \quad
\hat{s}^m = s^m,\  \hat{s}^{-m-1} = s_m.
\]

The cup product formula on $\mathcal{D}^{\ge 0}(kG, kG)$ are given in \cite{LZ16}, and the cup product formula on $\mathcal{D}^{<0}(kG, kG)$ are given in \cite{LWZ21}. We now recall the formulas frow this two papers and provide all the formulas of $\widehat{m}_2$ ($=\alpha_{1 \pm, 1 \pm, \pm}$) and $\beta_{1 \pm, 1 \pm, \pm}$.

\emph{Notation convention:} Let $X$ be a set of representatives of the conjugacy classes of elements in $G$. For each $x \in X$, $C_G(x)\subset G$ is the centralizer subgroup of $x$. Fix a decomposition of $G$ into right cosets of $C_G(x)$:
\[
G = C_G(x)\gamma_{1, x} \cup C_G(x)\gamma_{2, x} \cup \cdots \cup C_G(x)\gamma_{n_x, x},
\]
where $n_x$ is the number of elements in the conjugacy class $C_x$. Then the conjugacy class $C_x$ can be written as:
\[
C_x = \left\{ \gamma_{1, x}^{-1} x \gamma_{1, x}, \cdots, \gamma_{n_x, x}^{-1} x \gamma_{n_x, x} \right\}.
\]
We denote $x_i = \gamma_{i, x}^{-1} x \gamma_{i, x}$ and, without loss of generality, let $\gamma_{1, x} = 1$, so $x_1 = x$.

\textbf{Case 1. $\alpha_{1+, 1+, +}$ \cite{LZ16} and $\beta_{1+, 1+, +}$}

Let $\widehat{\varphi}_{x}: \overline{C_G(x)}^n\to k \in C^n(C_G(x),k)$ and $\widehat{\varphi}_{y}: \overline{C_G(y)}^m\to k \in C^n(C_G(y),k)$ with $x,y\in X$ and $m,n\ge 0$.

We first calculate $\alpha_{1+, 1+, +}$. We have $\alpha_{1+, 1+, +}\left(\widehat{\varphi}_x, \widehat{\varphi}_y\right)=\widehat{m}_2 \left(\widehat{\varphi}_x, \widehat{\varphi}_y\right)$. Assume that
\[
\widehat{m}_2 \left(\widehat{\varphi}_x, \widehat{\varphi}_y\right)=\sum_{z\in X} \widehat{m}_2 \left(\widehat{\varphi}_x, \widehat{\varphi}_y\right)_z\in \bigoplus_{z\in X} C^{n+m}(C_G(z), k),
\]
and by homotopy transfer theorem, for any $z\in X$, $\widehat{m}_2 \left(\widehat{\varphi}_x, \widehat{\varphi}_y\right)_z$ is defined as follows:
\[
\widehat{m}_2 \left(\widehat{\varphi}_x, \widehat{\varphi}_y\right)_z=\iota^{z,n+m} ( \rho^{x,n}(\widehat{\varphi}_x) \cup \rho^{y,m}(\widehat{\varphi}_y))_z.
\]
More specifically, for $h_1,\cdots,h_{n+m} \in \overline{C_G(z)}$,
\[
\widehat{m}_2 \left(\widehat{\varphi}_x, \widehat{\varphi}_y\right)_z (h_{1,n+m})= \sum_{(i, j) \in I_z} \widehat{\varphi}_x\left(h_{i,1}^x, \cdots, h_{i,n}^x\right) \widehat{\varphi}_{y}\left(h_{j,1}^y, \cdots, h_{j,m}^y\right)
\]
where 
\begin{itemize}
	\item $I_{z}=\left\{ (i, j) |\  i\in \left\{1, \cdots, n_x\right\}, j\in \left\{1, \cdots, n_y\right\} \text{ and }  x_i\left(h_1 \cdots h_n\right) y_j\left(h_1 \cdots h_n\right)^{-1}=z \right\}$,
    
	\item $h_{i,1}^x, \cdots, h_{i,n}^x \in \overline{C_G(x)}$ are determined by $\{h_1, \cdots h_n\}$, $x$, and $i$ from process $\spadesuit$, that is,
	\[
    \gamma_{i, x} h_1=h_{i,1}^x \gamma_{s_i^1, x}, \quad  \gamma_{s_i^1, x} h_2=h_{i,2}^x \gamma_{s_i^2, x}, \cdots, \gamma_{s_i^{n-1}, x} h_n=h_{i,n}^x \gamma_{s_i^n, x},
    \]
    we write $\{h_{i,1}^x, \cdots, h_{i,n}^x\}=\spadesuit_{x,i} \{h_1, \cdots, h_n\}$.
    
	\item	$h_{j,1}^y, \cdots, h_{j,m}^y \in \overline{C_G(y)}$ and $\{h_{j,1}^y, \cdots, h_{j,m}^y \}=\spadesuit_{y,j}\{h_{n+1},\cdots,h_{n+m} \}$.
\end{itemize}

\vspace{2em}  

Now we calculate $\beta_{1+, 1+, +}$, we have 
\[
\beta_{1+, 1+, +} \left(\widehat{\varphi}_x, \widehat{\varphi}_y\right)=\sum_{z\in X} s^{z,n+m} ( \rho^{x,n}(\widehat{\varphi}_x) \cup \rho^{y,m}(\widehat{\varphi}_y))_z,
\]
by definition of $s^z$, for $g_1,\cdots,g_{m+n-1}\in \overline{G}$,
\begin{eqnarray*}
&s^{z,n+m} ( \rho^{x,n}(\widehat{\varphi}_x) \cup \rho^{y,m}(\widehat{\varphi}_y))_z (g_{1, m+n-1})\\
=&\sum\limits_{l=0}^{m+n-1}\sum\limits_{a=1}^{n_z}(-1)^{l}e_{a, l}^1z_a g_1\cdots   g_{m+n-1}, 
\end{eqnarray*}
where $e_{a, l}^1$ is, for each $l\in \{0,1,\ldots,m+n-1\}$ and $a\in \{1,\ldots,n_z\}$, is determined by 
\[
(  \rho^{x,n}(\widehat{\varphi}_x) \cup \rho^{y,m}(\widehat{\varphi}_y))_z(h_{a,1}^z, \dots , h_{a,l}^z, \gamma_{s_a^l, z}, g_{l+1}, \dots, g_{m+n-1})g_{m+n-1}^{-1}\cdots  g_1^{-1}\gamma_{a, z}^{-1}=\sum\limits_{t=1}^{n_z}e_{a, l}^t z_t,
\]
with $(h_{a, 1}^z, \dots , h_{a, m+n-1}^z)=\spadesuit_{z, a}(g_1, \dots , g_{m+n-1})$, and by computation of $\alpha_{1+, 1+, +}$, $e_{a,l}^1$ can be obtained by using operation $\alpha_{1+, 1+, +}$ on $h_{a, 1}^z, \dots , h_{a, l}^z, \gamma_{s_a^l, z}, g_{l+1}, \dots , g_{m+n-1}$, that is,
\begin{eqnarray*}
e_{a, l}^1
&=& \iota^z {(\rho^x  \left(\widehat{\varphi}_x\right) \cup \rho^y \left(\widehat{\varphi}_y \right))_z} 
(h_{a, 1}^z, \dots , h_{a, l}^z, \gamma_{s_a^l, z}, g_{l+1}, \dots , g_{m+n-1})\\
&=&\sum\limits_{(i, j)\in I_z^{a,l} }\widehat{\varphi}_x(h_{i, 1}^x, \dots , h_{i, n}^x) \widehat{\varphi}_y (h_{j, 1}^y, \dots , h_{j, m}^y), 
\end{eqnarray*}
where for each $a,l$, 
\begin{itemize}
    \item $I_z^{a,l}=\{ (i, j)|\ 1\le i\le n_x, 1\le j\le n_y \text{ and } x_i(g_1^{a,l} \cdots  g_n^{a,l} )y_j(g_1^{a,l} \cdots  g_n^{a,l})^{-1}=z \}$,  here we rewrite 
    \[
    (g_1^{a,l} , g_2^{a,l} , \cdots, g_{m+n}^{a,l} )=(h_{a, 1}^z, \dots , h_{a, l}^z, \gamma_{s_a^l, z}, g_{l+1}, \dots , g_{m+n-1}),
    \]
    and for each $(i,j)\in I_z^{a,l}$,

    \item $\{ h_{i, 1}^x, \dots , h_{i, n}^x \}=\spadesuit_{x,i}\{ g_1^{a,l}, \cdots, g_n^{a,l}\}$,

    \item $\{ h_{j, 1}^y, \dots , h_{j, m}^y \}=\spadesuit_{y,j}\{ g_{n+1}^{a,l}, \cdots, g_{n+m}^{a,l}\}$.
\end{itemize}

\vspace{2em} 

\textbf{Case 2. $\alpha_{1-, 1-, -}$ \cite{LWZ21} and $\beta_{1-, 1-, +}$ }

Let $\widehat{\alpha}_x =\left(g_{1, s}\right) \in  \widehat{C}^{-s-1} \left(C_G(x),  k\right) =C_s\left(C_G(x), k\right) =k\left[{\overline{C_G(x)}}^{\times s}\right]$ and 
$\widehat{\alpha}_y =\left(h_{1, t}\right)\in C_t\left(C_G(y), k\right)$ with $x, y \in X$, $s, t\ge 0$.

Recall from \cite{LWZ21}, we have
\[
\alpha_{1-, 1-, -}\left(\widehat{\alpha}_x, \widehat{\alpha}_y\right)=\widehat{m}_2 \left(\widehat{\alpha}_x, \widehat{\alpha}_y\right)=\sum_{z\in X} \rho_{z,s+t} \left( \iota_{x,s}(\widehat{\alpha}_x)\cup \iota_{y,t} (\widehat{\alpha}_y) \right)_z.
\]

For any $z\in X$, we define
\[
I_{z}:=\left\{g \in G | h_1 \cdots h_t g^{-1} g_s^{-1} \cdots g_1^{-1} x g_1\cdots g_s g h_t^{-1} \cdots h_1^{-1} y=\Phi(g) ^{-1} z \Phi(g) \text{ for some } \Phi (g)\in G \right\}.
\]
Then
\begin{eqnarray*}
&&\rho_{z,s+t} \left( \iota_{x,s} (\widehat{\alpha}_x)\cup \iota_{y,t} (\widehat{\alpha}_y) \right)_z\\
&=& \rho_{z,s+t} \big( \sum_{g\in G}(g h_t^{-1} \cdots h_1^{-1} y,\ h_{1,t},\ g^{-1} g_s^{-1} \cdots g_1^{-1} x,\ g_{1,s} ) \big)_z \\
&= &\rho_{z,s+t} \big( \sum_{g\in I_z} (g_s^{-1} \cdots g_1^{-1} x^{-1} g_1 \cdots g_s g h_t^{-1} \cdots h_1^{-1} \Phi(g) ^{-1} z \Phi(g),\  h_{1,t},\ g^{-1} g_s^{-1} \cdots g_1^{-1} x,\ g_{1,s}  ) \big)\\
&= &\sum_{g \in I_z}\left(k_{i_g,1}^z, \cdots, k_{i_g, s+t+1}^z\right) \in C_{s+t+1}\left(C_G(z),  k\right), 
\end{eqnarray*}
\[
\widehat{m}_2\left(\widehat{\alpha}_x,  \widehat{\alpha}_y\right) =  \sum_{z \in X} \sum_{g \in I_z}\left(k_{i_g,1}^z, \cdots, k_{i_g,s+t+1}^z\right)\in \underset{z\in Z}{\bigoplus} C_{s+t+1}\left(C_G(z), k\right),
\]
where for each $z\in X$, $k_{i_g,1}^z, \cdots, k_{i_g,s+t+1}^z \in \overline{C_G(z)}$ are uniquely determined by the following equations:
    \[
    \Phi(g) \in C_G(z) \gamma_{i_g, z},\ \gamma_{i_g, z} h_1 =k_{i_g,1}^z \gamma_{s_{i_g}^1, z},\ \gamma_{s_{i_g}^1, z} h_2=k_{i_g,2}^z \gamma_{s_{i_g}^2, z}, \cdots,  \gamma_{s_{i_g}^{t-1},  z} h_t =k_{i_g,t}^z \gamma_{s_{i_g}^t, z}, 
    \]
    \[
    \gamma_{s_{i_g}^t, z} g^{-1} g_s^{-1} \cdots g_1^{-1} x=k_{i_g,t+1}^z \gamma_{s_{i_g}^{t+1},  z},\ \gamma_{s_{i_g}^{t+1}, z}  g_1=k_{i_g,t+2}^z \gamma_{s_{i_g}^{t+2}, z}, \cdots,   \gamma_{s_{i_g}^{t+s}, z} g_s=k_{i,t+s+1}^z \gamma_{s_{i_g}^{t+s+1}, z},
    \]
    which means that $\Phi(g) \in C_G(z) \gamma_{i_g, z}$ and 
    \[
    \{ k_{i_g,1}^z, \cdots, k_{i_g,s+t+1}^z \}=\spadesuit_{z,i_g} \{h_1, \cdots, h_t, g^{-1} g_s^{-1} \cdots g_1^{-1} x, g_1, \cdots, g_s  \}.
    \]

\begin{Rmk}
    Notice that $k_{i_g,1}^z, \cdots, k_{i_g,s+t+1}^z \in \overline{C_G(z)}$ are independent to the choice of $\Phi ( g ) \in G$. If we also choose $\Phi ( g' ) \in G$ such that
    \[
    h_1 \cdots h_t g^{-1} g_s^{-1} \cdots g_1^{-1} x g_1\cdots  g_s g h_t^{-1} \cdots h_1^{-1} y=\Phi'(g)^{-1}  z \Phi'(g)=\Phi(g)^{-1}  z \Phi(g),
    \]
    then $\Phi'(g)\Phi(g)^{-1} \in C_G(z)$, $\Phi'(g)=\Phi'(g) \Phi(g)^{-1} \Phi(g) \in C_G(z) \gamma_{i_g, z}$.
\end{Rmk}

\vspace{2em}

Now we calculate $\beta_{1-, 1-, -}$. 

For any $z\in X$ and $g\in I_z$,  we rewrite $\{ h_1,\cdots,h_t, g^{-1} g_s^{-1}\cdots g_1^{-1} x, g_1,\cdots, g_s \}$ as $\{ h_{g,1},\cdots,h_{g,s+t+1} \}$. From computation of $\alpha_{1-, 1-, -} $,
\[
\{ k_{i_g,1}^z, \cdots, k_{i_g,s+t+1}^z \}=\spadesuit_{z,i_g} \{ h_{g,1},\cdots,h_{g,s+t+1} \}.
\]
By definition of $s_{z,s+t}$, we have 	 	
\begin{align*}
& s_{z,s+t}(\iota_{x,s} (\widehat{\alpha}_x ) \cup \iota_{y,t} (\widehat{\alpha}_y ) )_z \\
=& s_{z,s+t} \big( \sum_{g\in I_z} (h_{g,s+t+1}^{-1} \cdots h_{g,1}^{-1} \Phi(g) ^{-1} z \Phi(g),\  h_{g,1},\cdots,h_{g,s+t+1} ) \big)\\
=& \sum_{g\in I_z}  \sum\limits_{j=0}^{s+t+1}  (-1)^j  (h_{g,s+t+1}^{-1} \cdots h_{g,1}^{-1} \Phi(g) ^{-1} z h,\  k_{i_g,1}^z, \cdots, k_{i_g,j}^z, \gamma_{s_{i_g}^j,z}, h_{g,j+1}, \cdots,h_{g,s+t+1} ) \\
=& \sum_{g \in I_z}  \sum\limits_{j=0}^{s+t+1}  (-1)^j \big((k_{i_g,1}^z \cdots k_{i_g,j}^z \gamma_{s_{i_g}^j,z} h_{g,j+1} \cdots h_{g,s+t+1})^{-1} z , \  k_{i_g,1}^z, \cdots, k_{i_g,j}^z, \gamma_{s_{i_g}^j,z}, h_{g,j+1}, \cdots,h_{g,s+t+1} \big) .
\end{align*}	 	
Thus, $\beta_{1-, 1-, -}\left(\widehat{\alpha}_x, \widehat{\alpha}_y\right)=\sum_{z\in X}\sum_{g \in I_z}  \sum\limits_{j=0}^{s+t+1}  (-1)^j$
\[
\big((k_{i_g,1}^z \cdots k_{i_g,j}^z \gamma_{s_{i_g}^j,z} h_{g,j+1} \cdots h_{g,s+t+1})^{-1} z , \  k_{i_g,1}^z, \cdots, k_{i_g,j}^z, \gamma_{s_{i_g}^j,z}, h_{g,j+1}, \cdots,h_{g,s+t+1} \big).
\]

\vspace{2em}

\textbf{Case 3. $\alpha_{1+, 1-, -}$ and $\beta_{1+, 1-, -}$}

Let $\widehat{\varphi}_x: \overline{C_G(x)}^n\to k \in C^n(C_G(x),k)$ and $\widehat{\alpha}_y =\left(h_{1, t}\right)\in C_t\left(C_G(y), k\right)$ with $x,  y \in X$, $n, t \ge 0$ and $n-t-1 <0$. We want to compute
\begin{align*}
    \alpha_{1+, 1-, -} (\widehat{\varphi}_{x}, \widehat{\alpha}_y)= \widehat{m}_2 (\widehat{\varphi}_{x}, \widehat{\alpha}_y)
    &= \rho_{t-n} \big(\rho^{x,n} (\widehat{\varphi}_x) \cup \iota_{y,t} (\widehat{\alpha}_y) \big)\\
    &= \sum_{z\in X} \rho_{z,t-n} \big( \rho^{x,n} (\widehat{\varphi}_x) \cup \iota_{y,t} (\widehat{\alpha}_y) \big)_z.
\end{align*}

Firstly, we compute 
$  \rho^{x,n} (\widehat{\varphi}_x) \cup \iota_{y,t} (\widehat{\alpha}_y) \in C_{t-n}\left(kG, kG \right)$:
\[
\rho^{x,n} (\widehat{\varphi}_x) :=\varphi_{x}:\overline{G} ^{\times n} \rightarrow kG, \quad \varphi_x \left(g_{1, n}\right) = \sum_{i=1}^{n_x} \widehat{\varphi}_x \left(k_{i,1}^x, \cdots k_{i,n}^x \right) x_i g_1 g_2\cdots g_n,
\]
where $k_{i,1}^x, \cdots, k_{i,n}^x \in \overline{C_G(x)}$ and 
$\{k_{i,1}^x, \cdots, k_{i,n}^x\} = \spadesuit_{x,i}\{ g_1,\cdots,g_n \}$;
\[
\iota_{y,t} \left( \widehat{\alpha}_y \right) =\iota_{y,t} \left(h_{1, t}\right) =\left(h_t^{-1} \cdots h_1^{-1} y, h_{1, t} \right) \in \mathcal{H}_{y,t};
\]
\begin{align*}
\rho^{x,n} (\widehat{\varphi}_x) \cup \iota_{y,t} (\widehat{\alpha}_y)
&=\left( \varphi_x \left(h_{t-n+1, t} \right)h_t^{-1} \cdots h_1^{-1} y, h_{1, t-n}\right) \\ &=\sum_{i=1}^{n_x}\left( \widehat{\varphi}_x \left( k_{i,1}^x, \cdots, k_{i,n}^x \right) x_i h^{-1}_{t-n}\cdots h_1^{-1}y, h_{1, t-n}\right)
\\&=\sum_{i=1}^{n_x} \widehat{\varphi}_x \left( k_{i,1}^x, \cdots, k_{i,n}^x \right) \left(x_i h^{-1}_{t-n}\cdots h_1^{-1} y, h_{1, t-n}\right) \in C_{t-n} (kG, kG )
\end{align*}    
with  $\{k_{i,1}^x, \cdots, k_{i,n}^x\} = \spadesuit_{x,i}\{ h_{t-n+1},\cdots,h_t \}$; then
\begin{align*}
\rho_{t-n} \big(\rho^{x,n} (\widehat{\varphi}_x) \cup \iota_{y,t} (\widehat{\alpha}_y) \big)
&=\sum_{z\in X} \rho_{z,t-n} \big( \rho^{x,n} (\widehat{\varphi}_x) \cup \iota_{y,t} (\widehat{\alpha}_y) \big)\\
&=  \sum_{z\in X} \sum_{i\in I_z} \widehat{\varphi}_x \left( k_{i,1}^x, \cdots, k_{i,n}^x \right)  \rho_{t-n,z} ( h_{t-n}^{-1}\cdots h_1^{-1} \phi(x_i)^{-1} z \phi(x_i), h_{1,t-n} ) \\
&= \sum_{z\in X} \sum_{i\in I_z} \widehat{\varphi}_x \left( k_{i,1}^x, \cdots, k_{i,n}^x \right) ( h_{j_i,1}^z, \cdots, h_{j_i,t-n}^z )
\end{align*}
where 
\begin{itemize}
    \item for any $z\in X$, $I_z$ is defined by 
    \[
    I_z:= \left\{i\ |\ 1\le i\le n_x,\  h_1 \cdots h_{t-n} x_i  h_{t-n}^{-1} \cdots h_1^{-1} y=\phi(x_i)^{-1} z \phi(x_i) \text{ for some } \phi(x_i) \in G \right\};
    \]

    \item $ h_{j_i,1}^z, \cdots, h_{j_i,t-n}^z\in \overline{C_G(z)}$ are uniquely determined by $\Phi (x_i)=h \gamma _{j_i, z} \in \overline{C_G(z)}\gamma _{j_i, z}$ and 
    \[
    \{  h_{j_i,1}^z, \cdots, h_{j_i,t-n}^z \}=\spadesuit_{z,j_i} \{ h_1,\cdots, h_{t-n} \}.
    \]
\end{itemize}

\vspace{2em}  

Now we calculate $\beta_{1+, 1-, -}$. 
\begin{align*}
    \beta_{1+, 1-, -} (\widehat{\varphi}_{x}, \widehat{\alpha}_y)
    &= s_{t-n} \big(\rho^{x,n} (\widehat{\varphi}_x) \cup \iota_{y,t} (\widehat{\alpha}_y) \big)\\
    &= \sum_{z\in X} s_{z,t-n} \big( \rho^{x,n} (\widehat{\varphi}_x) \cup \iota_{y,t} (\widehat{\alpha}_y) \big)_z.
\end{align*}
For any $z\in X$, by deinition of $s_{z, t-n}: \mathcal{H}_{z, t-n} \rightarrow \mathcal{H}_{z, t-n+1}$, we have
\begin{align*}
  s_{z,t-n} \big( \rho^{x,n} (\widehat{\varphi}_x)& \cup \iota_{y,t} (\widehat{\alpha}_y) \big)_z
 = \sum_{i\in I_z} \sum_{u=0}^{t-n} (-1)^u \widehat{\varphi}_x \left( k_{i,1}^x, \cdots, k_{i,n}^x \right)\\
  &\big( (h_{j_i,1}^z \cdots h_{j_i,u}^z \gamma_{s_{j_i}^u,z} h_{u+1} \cdots  h_{t-n} )^{-1} z,\ h_{j_i,1}^z, \cdots, h_{j_i,u}^z, \gamma_{s_{j_i}^u,z},h_{u+1},\cdots, h_{t-n} \big).
\end{align*}
		 
\vspace{2em}

\textbf{Case 4. $\alpha_{1+, 1-, +}$ and $\beta_{1+, 1-, +}$}		

Let $\widehat{\varphi}_x: \overline{C_G(x)}^n\to k \in C^n(C_G(x),k)$ and $\widehat{\alpha}_y =\left(h_{1, t}\right)\in C_t\left(C_G(y), k\right)$ with $x,  y \in X$, $n, t \ge 0$ and $n-t-1 \ge 0$. We want to compute
\begin{align*}
    \alpha_{1+, 1-, +} (\widehat{\varphi}_{x}, \widehat{\alpha}_y)= \widehat{m}_2 (\widehat{\varphi}_{x}, \widehat{\alpha}_y)
    &= \iota^{n-t-1} \big(\rho^{x,n} (\widehat{\varphi}_x) \cup \iota_{y,t} (\widehat{\alpha}_y) \big)\\
    &= \sum_{z\in X} \iota^{z,n-t-1} \big( \rho^{x,n} (\widehat{\varphi}_x) \cup \iota_{y,t} (\widehat{\alpha}_y) \big)_z.
\end{align*}

Firstly, we compute 
\[
\rho^{x,n} (\widehat{\varphi}_x) \cup \iota_{y,t} (\widehat{\alpha}_y) \in C^{n-t-1} (kG, kG) =\bigoplus_{z\in X}\mathcal{H}^{z,n-t-1},
\]
for $(g_{1,n-t-1}) \in \overline{G}^{\times n-t-1} $,
\begin{align*}
\big( \rho^{x,n} (\widehat{\varphi}_x) \cup \iota_{y,t} (\widehat{\alpha}_y)\big) (g_{1, n-t-1})  
= &\sum_{g \in G} \varphi_x \left(g_{1, n-t-1}, g^{-1}, h_{1, t}\right)h^{-1}_t\cdots h_1^{-1} y g\\
=&\sum_{g \in G} \sum_{i=1}^{n_x} \widehat{\varphi}_x (h_{i_g,1}^x,\cdots, h_{i_g,n}^x ) x_i g_1\cdots g_{n-t-1} g^{-1} y g \\
= &\sum_{z\in X} \sum_{j=1}^{n_z}\sum_{\left(g, i\right)\in I_{z_j}} \widehat{\varphi}_x (h_{i_g,1}^x,\cdots, h_{i_g,n}^x ) z_j g_1g_2\cdots g_{n-t-1}
\end{align*}
where
\begin{itemize}
    \item for any $z\in X$, $g\in G$ and $i\in \{1,\cdots, n_x \}$, $\{h_{i_g,1}^x,\cdots, h_{i_g, n}^x \}\subset \overline{C_G(x)}$ is defined as follows
    \[
    \{h_{i_g,1}^x,\cdots, h_{i_g, n}^x \}=\spadesuit_{x,i} \{ g_1,\cdots, g_{n-t-1}, g^{-1}, h_1,\cdots, h_t \},
    \]

    \item $I_{z_j}:= \{(g, i)\ |\ g \in G , 1\le i\le n_x, x_i g_1 \cdots g_{n-t-1} g^{-1} yg (g_1\cdots g_{n-t-1})^{-1}=z_j \}$.
\end{itemize}    
Then for $(g_{1,n-t-1})\in \overline{C_G(z)}^{\times n-t-1} $, we obtain 
\[
\iota^{z,n-t-1} \big( \rho^{x,n} (\widehat{\varphi}_x) \cup \iota_{y,t} (\widehat{\alpha}_y) \big)_z (g_{1,n-t-1})= \sum_{\left(g, i\right)\in I_z} \widehat{\varphi}_x (h_{i_g,1}^x,\cdots, h_{i_g,n}^x )
\]
with $I_z=I_{z_1}$.

\vspace{2em}  

Now we calculate $\beta_{1+, 1-, +}$. 
\begin{align*}
\beta_{1+, 1-, +} (\widehat{\varphi}_{x}, \widehat{\alpha}_y)
&= s^{n-t-1} \big(\rho^{x,n} (\widehat{\varphi}_x) \cup \iota_{y,t} (\widehat{\alpha}_y) \big)\\
&= \sum_{z\in X} s^{z,n-t-1} \big( \rho^{x,n} (\widehat{\varphi}_x) \cup \iota_{y,t} (\widehat{\alpha}_y) \big)_z.
\end{align*}

For any $z\in X$ and $(g_{1,n-t-2})\in \overline{G}^{n-t-2} $, by definition of $s^{z,n-t-1} $
\begin{align*}
s^{z,n-t-1} \big( \rho^{x,n} (\widehat{\varphi}_x) \cup \iota_{y,t} (\widehat{\alpha}_y) \big)_z (g_{1,n-t-2})
&= \sum\limits_{l=0}^{n-t-2}\sum\limits_{a=1}^{n_z}(-1)^l e_{a,l}^1 z_a g_1\cdots g_{n-t-2},
\end{align*}
where $e_{a,l}^1$ is dertermined by the following equation
\[
\big(  \rho^{x,n} (\widehat{\varphi}_x) \cup \iota_{y,t} (\widehat{\alpha}_y) \big) _z (h_{a,1}^z,\cdots,h_{a,l}^z,\gamma_{s_a^l,z}, g_{l+1},\cdots,g_{n-t-2})  
g_{n-t-2}^{-1} \cdots g_1^{-1} \gamma_{a,z}^{-1}=\sum\limits_{t=1}^{n_z} e_{a,l}^t z_t,
\]
and from the computation of $\alpha_{1+, 1-, +}$, we have
\begin{align*}
    e_{a,l}^1 
    &= \iota^{z,n-t-1} \big( \rho^{x,n} (\widehat{\varphi}_x) \cup \iota_{y,t} (\widehat{\alpha}_y) \big)_z  (h_{a,1}^z,\cdots,h_{a,l}^z,\gamma_{s_a^l,z}, g_{l+1},\cdots,g_{n-t-2})\\
    &=  \sum_{\left(g, i\right)\in I_z} \widehat{\varphi}_x (h_{i_g,1}^{x,a,l},\cdots, h_{i_g,n}^{x,a,l} ) 
\end{align*}
where for each $a\in \{1,\cdots,n_z \}$ and $l\in \{1,\cdots, n-t-2\}$,

\begin{itemize}
    \item $\{ h_{a,1}^z,\cdots, h_{a,n-t-2}^z \} =\spadesuit_{z,a} \{ g_1,\cdots, g_{n-t-2} \}$; 
  
    \item $I_z := \{(g, i)\ |\ g \in G , 1\le i\le n_x, x_i \gamma_{a,l} g_1 \cdots g_{n-t-2} g^{-1} yg (\gamma_{a,l} g_1\cdots g_{n-t-2})^{-1}=z \}$;

    \item for any  $(g, i)\in I_z$,
    \[
    \{h_{i_g,1}^{x,a,l},\cdots, h_{i_g, n}^{x,a,l} \}=\spadesuit_{x,i}\{ h_{a,1}^z,\cdots,h_{a,l}^z,\gamma_{s_a^l,z}, g_{l+1},\cdots,g_{n-t-2}, g^{-1}, h_1, \cdots, h_t\}.
    \]
\end{itemize}

\vspace{2em}

\textbf{Case 5. $\alpha_{1-, 1+, -}$ and $\beta_{1-, 1+, -}$}

Let $\widehat{\alpha}_x =\left(g_{1, s}\right)\in C_s\left(C_G(x), k\right)$ and $\widehat{\varphi}_y: \overline{C_G(y)}^m\to k \in C^m(C_G(y),k)$ with $x,  y \in X$, $s, m \ge 0$ and $m-s-1 <0$. We want to compute
\begin{align*}
    \alpha_{1-, 1+, -} (\widehat{\alpha}_{x}, \widehat{\varphi}_y)= \widehat{m}_2 (\widehat{\alpha}_{x}, \widehat{\varphi}_y)
    &= \rho_{s-m} \big(\iota_{x,s} (\widehat{\alpha}_x) \cup \rho^{y,m} (\widehat{\varphi}_y) \big)\\
    &= \sum_{z\in X} \rho_{z,s-m} \big( \iota_{x,s} (\widehat{\alpha}_x) \cup \rho^{y,m} (\widehat{\varphi}_y) \big)_z.
\end{align*}

Firstly, we compute 
$  \iota_{x,s} (\widehat{\alpha}_x) \cup \rho^{y,m} (\widehat{\varphi}_y) \in C_{s-m}\left(kG, kG \right)$:
\[
\iota_{x,s} \left( \widehat{\alpha}_x \right) =\iota_{x,s} \left(g_{1, s}\right) =\left(g_s^{-1} \cdots g_1^{-1} x, g_{1, s} \right) \in \mathcal{H}_{x,s};
\]
\[
\rho^{y,m} (\widehat{\varphi}_y) :=\varphi_y:\overline{G} ^{\times m} \rightarrow kG, \quad \varphi_y \left(h_{1, m}\right) = \sum_{i=1}^{n_y} \widehat{\varphi}_y \left(k_{i,1}^y, \cdots k_{i,m}^y \right) y_i h_1 h_2\cdots h_m,
\]
where $k_{i,1}^y, \cdots, k_{i,m}^y \in \overline{C_G(y)}$ and 
$\{k_{i,1}^y, \cdots, k_{i,m}^y\} = \spadesuit_{y,i}\{ h_1,\cdots,h_m \}$;
\begin{align*}
\iota_{x,s} (\widehat{\alpha}_x) \cup \rho^{y,m} (\widehat{\varphi}_y)
&=\left(g_s^{-1} \cdots g_1^{-1} x\ \varphi_y (g_{1,m}),g_{m+1,s} \right) \\ 
&=\sum_{j=1}^{n_y}\left(g_s^{-1} \cdots g_1^{-1} x\ \widehat{\varphi}_y 
\left( h_{j,1}^y, \cdots, h_{j,m}^y \right) y_j g_1\cdots g_m, g_{m+1, s}\right)\\
&=\sum_{z\in X}\sum_{j\in I_z}\widehat{\varphi}_y \left( h_{j,1}^y, \cdots, h_{j,m}^y \right) \left(g_s^{-1} \cdots g_1^{-1} \phi(y_j)^{-1} z \phi(y_j) g_1\cdots g_m, g_{m+1, s}\right)
\end{align*}
with  $\{h_{j,1}^y, \cdots, h_{j,m}^y\} = \spadesuit_{y,j}\{ g_1,\cdots,g_m \}$; then
\begin{align*}
\rho_{s-m} \big(\iota_{x,s} (\widehat{\alpha}_x) \cup \rho^{y,m} (\widehat{\varphi}_y) \big)
&=\sum_{z\in X} \rho_{z,s-m} \big(\iota_{x,s} (\widehat{\alpha}_x) \cup \rho^{y,m} (\widehat{\varphi}_y) \big)\\
&=  \sum_{z\in X} \sum_{j\in I_z} \widehat{\varphi}_y \left( h_{j,1}^y, \cdots, h_{j,m}^y \right)  \rho_{s-m,z} ( g_s^{-1} \cdots g_1^{-1} \phi(y_j)^{-1} z \phi(y_j) g_1\cdots g_m, g_{m+1, s} ) \\
&=  \sum_{z\in X} \sum_{j\in I_z} \widehat{\varphi}_y \left( h_{j,1}^y, \cdots, h_{j,m}^y \right) ( h_{i_j,1}^z, \cdots, h_{i_j,s-m}^z )
\end{align*}
where 
\begin{itemize}
    \item for any $z\in X$, $I_z$ is defined by 
    \[
    I_z:= \left\{j\ |\ 1\le j\le n_y,\  xy_j=\phi(y_j)^{-1} z \phi(y_j) \text{ for some } \phi(y_j) \in G \right\};
    \]

    \item $ h_{i_j,1}^z, \cdots, h_{i_j,s-m}^z\in \overline{C_G(z)}$ are uniquely determined by $\Phi (y_j)g_1\cdots g_m=h \gamma _{i_j, z} \in \overline{C_G(z)}\gamma _{i_j, z}$ and 
    \[
    \{  h_{i_j,1}^z, \cdots, h_{i_j,s-m}^z \}=\spadesuit_{z,i_j} \{ g_{m+1},\cdots, g_s \}.
    \]
\end{itemize}

\vspace{2em}  

Now we calculate $\beta_{1-, 1+, -}$. 
\begin{align*}
    \beta_{1-, 1+, -} (\widehat{\alpha}_{x}, \widehat{\varphi}_y)
    &= s_{s-m} \big(\iota^{x,s} (\widehat{\alpha}_x) \cup \rho_{y,m} (\widehat{\varphi}_y) \big)\\
    &= \sum_{z\in X} s_{z,s-m} \big(\iota^{x,s} (\widehat{\alpha}_x) \cup \rho_{y,m} (\widehat{\varphi}_y) \big)_z.
\end{align*}
For any $z\in X$, by deinition of $s_{z, s-m}: \mathcal{H}_{z, s-m} \rightarrow \mathcal{H}_{z, s-m+1}$, we have

\begin{align*}
  s_{z,s-m} \big(\iota^{x,s} (\widehat{\alpha}_x) &\cup \rho_{y,m} (\widehat{\varphi}_y) \big)_z
 =\sum_{j\in I_z} \sum_{u=0}^{s-m} (-1)^u \widehat{\varphi}_y \left( h_{j,1}^y, \cdots, h_{j,m}^y \right)\\
  &\big( (h_{i_j,1}^z \cdots h_{i_j,u}^z \gamma_{s_{i_j}^u,z}\ g_{m+u+1} \cdots  g_s )^{-1} z,\ h_{i_j,1}^z, \cdots, h_{i_j,u}^z, \gamma_{s_{i_j}^u,z},g_{m+u+1},\cdots, g_s \big).
\end{align*}

\vspace{2em}

\textbf{Case 6. $\alpha_{1-, 1+, +}$ and $\beta_{1-, 1+, +}$}		

Let $\widehat{\alpha}_x =\left(g_{1, s}\right)\in C_s\left(C_G(x), k\right)$ and $\widehat{\varphi}_y: \overline{C_G(y)}^m\to k \in C^m(C_G(y),k)$ with $x,  y \in X$, $s, m \ge 0$ and $m-s-1 \ge 0$. We want to compute
\begin{align*}
    \alpha_{1-, 1+, +} (\widehat{\alpha}_{x}, \widehat{\varphi}_y)= \widehat{m}_2 (\widehat{\alpha}_{x}, \widehat{\varphi}_y)
    &= \iota^{m-s-1} \big(\iota_{x,s} (\widehat{\alpha}_x) \cup \rho^{y,m} (\widehat{\varphi}_y) \big)\\
    &= \sum_{z\in X} \iota^{z,m-s-1} \big( \iota_{x,s} (\widehat{\alpha}_x) \cup \rho^{y,m} (\widehat{\varphi}_y) \big)_z.
\end{align*}

Firstly, we compute 
\[
\iota_{x,s} (\widehat{\alpha}_x) \cup \rho^{y,m} (\widehat{\varphi}_y) \in C^{m-s-1} (kG, kG) =\bigoplus_{z\in X}\mathcal{H}^{z,m-s-1},
\]
for $(h_{1,m-s-1}) \in \overline{G}^{\times m-s-1} $,
\begin{align*}
\big(\iota_{x,s} (\widehat{\alpha}_x) \cup \rho^{y,m} (\widehat{\varphi}_y)\big) (h_{1, m-s-1})  
= &\sum_{g \in G} g g_s^{-1}\cdots g_1^{-1} x\ \varphi_y \left(g_{1, s}, g^{-1}, h_{1, m-s-1}\right) \\
=&\sum_{g \in G} \sum_{j=1}^{n_y} \widehat{\varphi}_y (h_{j_g,1}^y,\cdots, h_{j_g,m}^y )  g g_s^{-1}\cdots g_1^{-1} x y_j g_1\cdots g_s g^{-1} h_1\cdots h_{m-s-1} \\
= &\sum_{z\in X} \sum_{i=1}^{n_z}\sum_{\left(g, j\right)\in I_{z_i}}  \widehat{\varphi}_y (h_{j_g,1}^y,\cdots, h_{j_g,m}^y ) z_i h_1 h_2\cdots h_{m-s-1}
\end{align*}
where
\begin{itemize}
    \item for any $z\in X$, $g\in G$ and $j\in \{1,\cdots, n_x \}$, $\{h_{j_g,1}^y,\cdots, h_{j_g, m}^y \}\subset \overline{C_G(y)}$ is defined as follows
    \[
    \{h_{j_g,1}^y,\cdots, h_{j_g, n}^y \}=\spadesuit_{y,j} \{ g_1,\cdots, g_s, g^{-1}, h_1,\cdots, h_{m-s-1} \},
    \]

    \item $I_{z_i}:= \{(g, j)\ |\ g \in G , 1\le j\le n_y,\ g g_s^{-1}\cdots g_1^{-1} x y_j g_1\cdots g_s g^{-1}=z_i \}$.
\end{itemize}    
Then for $(h_{1,m-s-1})\in \overline{C_G(z)}^{\times m-s-1} $, we obtain 
\[
\iota^{z,m-s-1} \big(\iota_{x,s} (\widehat{\alpha}_x) \cup \rho^{y,m} (\widehat{\varphi}_y)\big) _z (h_{1, m-s-1}) = \sum_{\left(g, j\right)\in I_z} \widehat{\varphi}_y (h_{j_g,1}^y,\cdots, h_{j_g,m}^y )
\]
with $I_z=I_{z_1}$.

\vspace{2em}  

Now we calculate $\beta_{1-, 1+, +}$. 
\begin{align*}
\beta_{1-, 1+, +} (\widehat{\alpha}_{x}, \widehat{\varphi}_y)
&= s^{m-s-1} \big(\iota_{x,s} (\widehat{\alpha}_x) \cup \rho^{y,m} (\widehat{\varphi}_y) \big)\\
&= \sum_{z\in X} s^{z,m-s-1} \big(\iota_{x,s} (\widehat{\alpha}_x) \cup \rho^{y,m} (\widehat{\varphi}_y) \big)_z.
\end{align*}

For any $z\in X$ and $(h_{1,m-s-2})\in \overline{G}^{m-s-2} $, by definition of $s^{z,m-s-1} $
\begin{align*}
s^{z,m-s-1} \big( \iota_{x,s} (\widehat{\alpha}_x) \cup \rho^{y,m} (\widehat{\varphi}_y) \big)_z (h_{1,m-s-2})
&= \sum\limits_{l=0}^{m-s-2}\sum\limits_{a=1}^{n_z}(-1)^l e_{a,l}^1 z_a h_1\cdots h_{n-t-2},
\end{align*}
where $e_{a,l}^1$ is dertermined by the following equation
\[
\big(  \iota_{x,s} (\widehat{\alpha}_x) \cup \rho^{y,m} (\widehat{\varphi}_y) \big) _z (h_{a,1}^z,\cdots,h_{a,l}^z,\gamma_{s_a^l,z}, h_{l+1},\cdots, h_{m-s-2})  
h_{m-s-2}^{-1} \cdots h_1^{-1} \gamma_{a,z}^{-1}=\sum\limits_{t=1}^{n_z} e_{a,l}^t z_t,
\]
and from the computation of $\alpha_{1-, 1+, +}$, we have
\begin{align*}
    e_{a,l}^1 
    &= \iota^{z,m-s-1} \big( \iota_{x,s} (\widehat{\alpha}_x) \cup \rho^{y,m} (\widehat{\varphi}_y) \big)_z  (h_{a,1}^z,\cdots,h_{a,l}^z,\gamma_{s_a^l,z}, h_{l+1},\cdots, h_{m-s-2})\\
    &=  \sum_{\left(g, j\right)\in I_z} \widehat{\varphi}_y (h_{j_g,1}^{y,a,l},\cdots, h_{j_g,n}^{y,a,l} ) 
\end{align*}
where for each $a\in \{1,\cdots,n_z \}$ and $l\in \{1,\cdots, m-s-2\}$,

\begin{itemize}
    \item $\{ h_{a,1}^z,\cdots, h_{a,m-s-2}^z \} =\spadesuit_{z,a} \{ h_1,\cdots, h_{n-t-2} \}$; 
  
    \item $I_z:= \{(g, j)\ |\ g \in G , 1\le j\le n_y,\ g g_s^{-1}\cdots g_1^{-1} x y_j g_1\cdots g_s g^{-1}=z \}$;

    \item for any  $(g, j)\in I_z$,
    \[
    \{h_{j_g,1}^{y,a,l},\cdots, h_{j_g, m}^{y,a,l} \}=\spadesuit_{y,j}\{ h_{a,1}^z,\cdots,h_{a,l}^z,\gamma_{s_a^l,z}, h_{l+1},\cdots, h_{m-s-2}, g^{-1}, g_1, \cdots, g_s\}.
    \]
\end{itemize}

In order easier to calculate and easier for readers to read, based on the above calculation, we provide the following index graph for the calculation of $\widehat{m}_2$. We obtain the first main theorem of this paper.

\begin{Thm}\label{Thm: Main 1}
    Let $G$ be a finite group, the cup product $\widehat{m}_2$ on $\underset{x \in X}{\bigoplus} \widehat{C}^* (C_G(x), k)$ can be divided into following $6$ cases:
   \tikzset{every picture/.style={line width=0.75pt}} 
\begin{center}
\begin{tikzpicture}[
   x=0.5pt,  
   y=0.5pt,  
   yscale=-1, 
   xscale=1, 
   every node/.style={font=\small} 
   ]
	\draw  [color={rgb,  255:red,  208; green,  2; blue,  27 }  , draw opacity=1 ] (392, 117.59) -- (247.77, 255.49) -- (102.96, 118.85) ;
	\draw [color={rgb,  255:red,  208; green,  2; blue,  27 }  , draw opacity=1 ]   (248, 256.59) -- (272, 332.59) ;
	\draw   (617, 112.59) -- (452.78, 259.55) -- (287.95, 113.28) ;
	\draw  [color={rgb,  255:red,  74; green,  144; blue,  226 }  , draw opacity=1 ] (428, 113.59) -- (340.23, 223.58) -- (252, 113.96) ; 
	\draw  [dash pattern={on 4.5pt off 4.5pt}]  (453, 258.59) -- (426, 354.59) ;
	\draw  [color={rgb,  255:red,  245; green,  166; blue,  35 }  , draw opacity=1 ] (598, 110.44) -- (323.34, 272.53) -- (48, 111.59) ; 
	\draw [color={rgb,  255:red,  74; green,  144; blue,  226 }  , draw opacity=1 ]   (340, 223.59) -- (341, 238.59) ;
	\draw [color={rgb,  255:red,  245; green,  166; blue,  35 }  , draw opacity=1 ]   (324, 272.59) -- (323, 299.59) ;
	\draw [color={rgb,  255:red,  208; green,  2; blue,  27 }  , draw opacity=1 ] [dash pattern={on 4.5pt off 4.5pt}]  (173, 316.59) -- (248, 256.59) ;
	\draw    (453, 258.59) -- (502, 344.59) ;
	\draw    (95, 57.6) -- (96, 77.6) ;
	\draw    (267, 55.6) -- (266, 77.6) ;
	\draw    (430, 56.6) -- (430, 79.6) ;
	\draw    (578, 50.6) -- (578, 73.6) ;
\draw (219, 75) node [anchor=north west][inner sep=0.75pt] {$\widehat{\alpha }_x :=( g_{1,s})$};
\draw (380.67, 77) node [anchor=north west][inner sep=0.75pt]{$\widehat{\alpha }_y:=(h_{1,t})$};
\draw (27, 77) node [anchor=north west][inner sep=0.75pt]    {$\left[\widehat{\varphi }_{x} :\overline{C_{G} (x)}^{\times n}\rightarrow k\right]$};
\draw (517, 70) node [anchor=north west][inner sep=0.75pt]    {$\left[\widehat{\varphi }_{y} :\overline{C_{G} (y)}^{\times m}\rightarrow k\right]$};
\draw (45, 16) node [anchor=north west][inner sep=0.75pt] {$\widehat{C}^{n\ge 0}( C_G (x), k)$};
\draw (376, 18) node [anchor=north west][inner sep=0.75pt] {$\widehat{C}^{t'< 0}( C_G (x), k)$};
\draw (211, 15) node [anchor=north west][inner sep=0.75pt]  {$\widehat{C}^{s'< 0}( C_G (y), k)$};
\draw (541, 15) node [anchor=north west][inner sep=0.75pt]  {$\widehat{C}^{m\ge 0}( C_G (y), k)$};
\draw (305, 294) node [anchor=north west][inner sep=0.75pt]    {$\widehat{m}_2$ case 1};
\draw (321, 231) node [anchor=north west][inner sep=0.75pt]    {$\widehat{m}_2$ case 2};
\draw (159.25, 299.55) node [anchor=north west][inner sep=0.75pt] [rotate=-321.3]  {$n+t'\ge 0$};
\draw (265.2, 249.02) node [anchor=north west][inner sep=0.75pt]  [rotate=-66.9]  {$n+t'< 0$};
\draw (399.32, 337.49) node [anchor=north west][inner sep=0.75pt] [rotate=-291.28]  {$m+s'\ge 0$};
\draw (497.2, 266.02) node [anchor=north west][inner sep=0.75pt]  [rotate=-66.9]  {$m+s'< 0$};
\draw (250, 329) node [anchor=north west][inner sep=0.75pt]    {$\widehat{m}_2$ case 3};
\draw (138, 314) node [anchor=north west][inner sep=0.75pt]    {$\widehat{m}_2$ case 4};
\draw (490, 348) node [anchor=north west][inner sep=0.75pt]    {$\widehat{m}_2$ case 5};
\draw (403, 356) node [anchor=north west][inner sep=0.75pt]    {$\widehat{m}_2$ case 6};
\draw (360, 54) node [anchor=north west][inner sep=0.75pt]  [font=\tiny]  {$t=-1-t'$};
\draw (191, 55) node [anchor=north west][inner sep=0.75pt]  [font=\tiny]  {$s=-1-s'$};
\end{tikzpicture}
\end{center}
Specifically, the corresponding results are as follows:

$\bf{\widehat{m}_2}$ \textbf{case 1}
\[
\widehat{m}_2 \left(\widehat{\varphi}_x, \widehat{\varphi}_y\right)=\sum_{z\in X} \widehat{m}_2 \left(\widehat{\varphi}_x, \widehat{\varphi}_y\right)_z\in \bigoplus_{z\in X} C^{n+m}(C_G(z), k),
\]
with
\[
\widehat{m}_2 \left(\widehat{\varphi}_x, \widehat{\varphi}_y\right)_z (h_{1,n+m})= \sum_{(i, j) \in I_z} \widehat{\varphi}_x\left(h_{i,1}^x, \cdots, h_{i,n}^x\right) \widehat{\varphi}_{y}\left(h_{j,1}^y, \cdots, h_{j,m}^y\right),
\]
for $h_1,\cdots,h_{n+m}\in C_G(z) $, where 
\begin{itemize}
	\item $I_{z}=\left\{ (i, j) |\ 1\le i\le n_x,\ 1\le j\le n_y \text{ and }  x_i\left(h_1 \cdots h_n\right) y_j\left(h_1 \cdots h_n\right)^{-1}=z \right\}$,
    
	\item $h_{i,1}^x, \cdots, h_{i,n}^x \in \overline{C_G(x)}$ are determined by $\{h_1, \cdots h_n\}$, $x$, and $i$ from process $\spadesuit$, that is,
	\[
    \gamma_{i, x} h_1=h_{i,1}^x \gamma_{s_i^1, x}, \quad  \gamma_{s_i^1, x} h_2=h_{i,2}^x \gamma_{s_i^2, x}, \cdots, \gamma_{s_i^{n-1}, x} h_n=h_{i,n}^x \gamma_{s_i^n, x},
    \]
    we write $\{h_{i,1}^x, \cdots, h_{i,n}^x\}=\spadesuit_{x,i} \{h_1, \cdots h_n\}$.
    
	\item	$h_{j,1}^y, \cdots, h_{j,m}^y \in \overline{C_G(y)}$ and $\{h_{j,1}^y, \cdots, h_{j,m}^y \}=\spadesuit_{y,j}\{h_{n+1},\cdots,h_{n+m} \}$.
\end{itemize}

\vspace{1.5em}

$\bf{\widehat{m}_2}$ \textbf{case 2}
\[
\widehat{m}_2\left(\widehat{\alpha}_x,  \widehat{\alpha}_y\right) =  \sum_{z \in X} \sum_{g \in I_z}\left(k_{i_g,1}^z, \cdots, k_{i_g,s+t+1}^z\right)\in \underset{z\in Z}{\bigoplus} C_{s+t+1}\left(C_G(z), k\right),
\]
where
\begin{itemize}
    \item $I_{z}:=\left\{g \in G | h_1 \cdots h_t g^{-1} g_s^{-1} \cdots g_1^{-1} x g_1\cdots g_s g h_t^{-1} \cdots h_1^{-1} y=\Phi(g) ^{-1} z \Phi(g) \text{ for some } \Phi (g)\in G \right\}.$

    \item for any $g\in I_z$, $\Phi(g) \in C_G(z) \gamma_{i_g, z}$ and 
    \[
    \{ k_{i_g,1}^z, \cdots, k_{i_g,s+t+1}^z \}=\spadesuit_{z,i_g} \{h_1, \cdots, h_t, g^{-1} g_s^{-1} \cdots g_1^{-1} x, g_1, \cdots, g_s  \}.
    \]
\end{itemize}

\vspace{1.5em}

$\bf{\widehat{m}_2}$ \textbf{case 3}
\[
\widehat{m}_2 (\widehat{\varphi}_{x}, \widehat{\alpha}_y)
= \sum_{z\in X} \sum_{i\in I_z} \widehat{\varphi}_x \left( k_{i,1}^x, \cdots, k_{i,n}^x \right) ( h_{j_i,1}^z, \cdots, h_{j_i,t-n}^z )
\]
where 
\begin{itemize}
    \item $\{k_{i,1}^x, \cdots, k_{i,n}^x\} = \spadesuit_{x,i}\{ h_{t-n+1},\cdots,h_t \}$;

    \item for any $z\in X$, $I_z$ is defined by 
    \[
    I_z:= \left\{i\ |\ 1\le i\le n_x,\  h_1 \cdots h_{t-n} x_i  h_{t-n}^{-1} \cdots h_1^{-1} y=\phi(x_i)^{-1} z \phi(x_i) \text{ for some } \phi(x_i) \in G \right\};
    \]

    \item $ h_{j_i,1}^z, \cdots, h_{j_i,t-n}^z\in \overline{C_G(z)}$ are uniquely determined by $\Phi (x_i)=h \gamma _{j_i, z} \in \overline{C_G(z)}\gamma _{j_i, z}$ and 
    \[
    \{  h_{j_i,1}^z, \cdots, h_{j_i,t-n}^z \}=\spadesuit_{z,j_i} \{ h_1,\cdots, h_{t-n} \}.
    \]
\end{itemize}

\vspace{1.5em}

$\bf{\widehat{m}_2}$ \textbf{case 4}
\[
\widehat{m}_2 (\widehat{\varphi}_{x}, \widehat{\alpha}_y)
= \sum_{z\in X} \iota^{z,n-t-1} \big( \rho^{x,n} (\widehat{\varphi}_x) \cup \iota_{y,t} (\widehat{\alpha}_y) \big)_z \in \bigoplus_{z\in X} C^{n-t-1}(C_G(z), k),
\]
and for $(g_{1,n-t-1})\in \overline{C_G(z)}^{\times n-t-1} $, we obtain
\[
\iota^{z,n-t-1} \big( \rho^{x,n} (\widehat{\varphi}_x) \cup \iota_{y,t} (\widehat{\alpha}_y) \big)_z (g_{1,n-t-1})= \sum_{\left(g, i\right)\in I_z} \widehat{\varphi}_x (h_{i_g,1}^x,\cdots, h_{i_g,n}^x )
\]
where
\begin{itemize}
    \item $I_z:= \{(g, i)\ |\ g \in G , 1\le i\le n_x, x_i g_1 \cdots g_{n-t-1} g^{-1} yg (g_1\cdots g_{n-t-1})^{-1}=z \}$,

    \item for any $z\in X$, $(g,i)\in I_z$, $\{h_{i_g,1}^x,\cdots, h_{i_g, n}^x \}\subset \overline{C_G(x)}$ is defined as follows
    \[
    \{h_{i_g,1}^x,\cdots, h_{i_g, n}^x \}=\spadesuit_{x,i} \{ g_1,\cdots, g_{n-t-1}, g^{-1}, h_1,\cdots, h_t \}.
    \]   
\end{itemize}    

\vspace{1.5em}

$\bf{\widehat{m}_2}$ \textbf{case 5}
\[
\widehat{m}_2 (\widehat{\alpha}_{x}, \widehat{\varphi}_y)=  \sum_{z\in X} \sum_{j\in I_z} \widehat{\varphi}_y \left( h_{j,1}^y, \cdots, h_{j,m}^y \right) ( h_{i_j,1}^z, \cdots, h_{i_j,s-m}^z ),
\]
where 
\begin{itemize}
    \item for any $z\in X$, $I_z$ is defined by 
    \[
    I_z:= \left\{j\ |\ 1\le j\le n_y,\  xy_j=\phi(y_j)^{-1} z \phi(y_j) \text{ for some } \phi(y_j) \in G \right\};
    \]

    \item for any $j\in I_z$, $\{h_{j,1}^y, \cdots, h_{j,m}^y\} = \spadesuit_{y,j}\{ g_1,\cdots,g_m \}$;

    \item $ h_{i_j,1}^z, \cdots, h_{i_j,s-m}^z\in \overline{C_G(z)}$ are uniquely determined by $\Phi (y_j)g_1\cdots g_m=h \gamma _{i_j, z} \in \overline{C_G(z)}\gamma _{i_j, z}$ and 
    \[
    \{  h_{i_j,1}^z, \cdots, h_{i_j,s-m}^z \}=\spadesuit_{z,i_j} \{ g_{m+1},\cdots, g_s \}.
    \]
\end{itemize}

\vspace{1.5em}

$\bf{\widehat{m}_2}$ \textbf{case 6}
\[
\widehat{m}_2 (\widehat{\alpha}_{x}, \widehat{\varphi}_y)
= \sum_{z\in X} \iota^{z,m-s-1} \big( \iota_{x,s} (\widehat{\alpha}_x) \cup \rho^{y,m} (\widehat{\varphi}_y) \big)_z,
\]
and for any $z\in X$ and $(h_{1,m-s-1})\in \overline{C_G(z)}^{\times m-s-1} $, 
\[
\iota^{z,m-s-1} \big(\iota_{x,s} (\widehat{\alpha}_x) \cup \rho^{y,m} (\widehat{\varphi}_y)\big) _z (h_{1, m-s-1}) = \sum_{\left(g, j\right)\in I_z} \widehat{\varphi}_y (h_{j_g,1}^y,\cdots, h_{j_g,m}^y )
\]
where
\begin{itemize}
    \item $I_z:= \{(g, j)\ |\ g \in G , 1\le j\le n_y,\ g g_s^{-1}\cdots g_1^{-1} x y_j g_1\cdots g_s g^{-1}=z \}$,

    \item for any $(g,j)\in I_z$, $\{h_{j_g,1}^y,\cdots, h_{j_g, m}^y \}\subset \overline{C_G(y)}$ is defined as follows
    \[
    \{h_{j_g,1}^y,\cdots, h_{j_g, n}^y \}=\spadesuit_{y,j} \{ g_1,\cdots, g_s, g^{-1}, h_1,\cdots, h_{m-s-1} \},
    \]
\end{itemize}   
\end{Thm}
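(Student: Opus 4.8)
The plan is to read off $\widehat{m}_2$ from the homotopy transfer theorem \cite{LV12} applied to the homotopy deformation retract of Theorem~\ref{Thm: add tate Hochschild}. Since the source complex $\mathcal{D}^*(kG,kG)$ already carries the $A_\infty$-structure of \cite{Wang21,RW19} with $m_2=\cup$, and $PT_2$ consists of a single tree, the transferred $2$-ary operation is simply
\[
\widehat{m}_2(a,b)\;=\;\hat{\rho}^*\bigl(\hat{\iota}^*(a)\cup\hat{\iota}^*(b)\bigr),
\]
the higher operations $m_i$ on the source contributing nothing for $n=2$. By Theorem~\ref{Thm: add tate Hochschild}, in each degree $\hat{\iota}^*$ is one of $\rho^m$ or $\iota_m$ and $\hat{\rho}^*$ is one of $\iota^m$ or $\rho_m$, whose explicit descriptions are recorded in Lemmas~\ref{Lemma: add Hochschild cohomology} and~\ref{Lemma: add Hochschild homology}. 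Hence the whole computation reduces to substituting these formulas together with the six-case definition of the generalized cup product $\cup$ on $\mathcal{D}^*(kG,kG)$, and then rewriting the outcome in centralizer terms.

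First I would fix $x,y\in X$ and homogeneous arguments (either $\widehat{\varphi}_x$ or $\widehat{\alpha}_x$ on the left, and either $\widehat{\varphi}_y$ or $\widehat{\alpha}_y$ on the right), and then split according to whether each of $\hat{\iota}^*(a)$ and $\hat{\iota}^*(b)$ lands in $\mathcal{D}^{\ge 0}(kG,kG)$ or $\mathcal{D}^{<0}(kG,kG)$, subdividing the two mixed situations by the sign of the total degree. This produces exactly the six regions of the index graph in the statement: $(+,+)$ uses Case~1 of $\cup$ followed by $\iota^{z,n+m}$, giving $\widehat{m}_2$ case~1; $(-,-)$ uses Case~2 of $\cup$ followed by $\rho_{z,s+t+1}$, giving case~2; the region (cohomology, homology) gives case~3 when the total degree is negative (Case~3 of $\cup$, then $\rho_{z,t-n}$) and case~4 otherwise (Case~4 of $\cup$, then $\iota^{z,n-t-1}$); and symmetrically the region (homology, cohomology) yields cases~5 and~6 from Cases~5 and~6 of $\cup$. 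In each of the six cases the procedure is the same: write $\hat{\iota}^*$ of each argument explicitly as an element of the subcomplex $\mathcal{H}^{x,n}$ (via $\rho^{x,n}$) or $\mathcal{H}_{y,t}$ (via $\iota_{y,t}$); apply the relevant case of $\cup$, yielding an element of $\mathcal{D}^*(kG,kG)$ written as a sum over a running group element $g\in G$; decompose this along $\mathcal{D}^*(kG,kG)=\bigoplus_{z}\mathcal{H}^{z,\bullet}$ or $\bigoplus_z\mathcal{H}_{z,\bullet}$; and finally apply $\iota^{z,\bullet}$ or $\rho_{z,\bullet}$, which annihilates all but finitely many terms and leaves a sum over the index set $I_z$.

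The hard part will be the bookkeeping of the $\spadesuit$-processes, since each case involves composing two of them. Applying $\rho^{x,n}$ or $\iota_{y,t}$ rewrites the input group elements against the right-coset representatives $\gamma_{i,x}$ (resp.\ $\gamma_{i,y}$), producing the centralizer elements $h_{i,j}^x$; after forming the generalized cup/cap product --- which in the mixed and homological cases carries an auxiliary summation over a running $g\in G$ together with insertions of $g$ and $g^{-1}$ --- one must re-express the outcome against the representatives $\gamma_{i,z}$ of the \emph{target} centralizer $C_G(z)$, i.e.\ run a second $\spadesuit$-process on the target side. The one point requiring genuine care is to check that the relevant conjugator ($\Phi(g)$ in case~2, $\phi(x_i)$ in case~3, $\phi(y_j)$ in case~5, and the implicit conjugators defining $I_z$ in cases~4 and~6) is well defined modulo $C_G(z)$, so that the index set $I_z$, the distinguished representatives $k_{i_g,\bullet}^z$ and $h_{j_i,\bullet}^z$, and therefore the scalars $\widehat{\varphi}_\bullet(\dots)$, are unambiguous (cf.\ the Remark following the computation of $\alpha_{1-,1-,-}$). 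Everything else is a direct unwinding of definitions, so assembling the six computations of $\alpha_{1\pm,1\pm,\pm}$ performed above yields precisely the six displayed formulas.
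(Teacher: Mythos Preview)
Your proposal is correct and follows exactly the paper's approach: the paper proves the theorem by carrying out, in the text immediately preceding the statement, the six case-by-case computations of $\alpha_{1\pm,1\pm,\pm}=\hat{\rho}^*\bigl(\hat{\iota}^*(-)\cup\hat{\iota}^*(-)\bigr)$ via the explicit formulas for $\rho^{x,n},\iota_{x,s},\iota^{z,\bullet},\rho_{z,\bullet}$ and the six cases of $\cup$, tracking the $\spadesuit$-processes throughout and verifying well-definedness of the conjugators (the Remark after Case~2). The theorem itself is then stated as a summary of those computations, so your outline matches the paper's argument essentially step for step.
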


\subsection{\texorpdfstring{$A_\infty$-structures on the addtive decomposition of the Hochschild cohomology at the complex level }{A-infinity on the addtive decomposition of the Hochschild cohomology at the complex level}}\label{section:Ainfinity-on-the-additive-decomposition}\

According to the homotopy transfer theorem, the operation associated with any planar binary tree with $n$ leaves in $\mathrm{PBT_n}$ is defined by assigning $\hat{\iota}^{*\ge 0}=\rho^*$ to the leaves, $\hat{s}^{*\ge 0}=s^* $ to the internal edges, $\widehat{m}_2$ to the internal vertices, and $\hat{\rho}^{*\ge 0} =\iota^*$ to the root. It follows that every binary subtree of the planar binary tree correspond to either $\alpha_{i+,j+,+} $ operation or $\beta_{i+,j+,+} $ ($i,j\in\{ 0,1 \}$). 

In particular, only the sub-binary trees at the lowest layer correspond to $\alpha$-type operation, while all other subtrees associated with $\beta$-type operations. Moreover, when $n=2$, the bottem layer subtree corresponds to the operation $\alpha_{1+,1++}$; otherwise, it corresponds to one of $\alpha_{0+,1+,+} $, $\alpha_{1+,0+,+} $, or $\alpha_{0+,0+,+} $. Furthermore, only the top layer corresponds to the operation $\beta_{1+,1+,+}$ (for $n\ge 3$), while all other layers between top and bottem layers correspond to one of the operation $\beta_{0+,1+,+} $, $\beta_{1+,0+,+}$ or $\beta_{0+,0+,+}$. Therefore, the operation associated with any n-leaf planar binary tree consisits of a sequence of $\beta$-operations, followed by a final $\alpha$-operation.

In this section, we discuss all the results of $\alpha_{i+, j+, +}$ and $\beta_{i+, j+, +}$ ($i, j\in \{0, 1\}$), then we prove the following main results in this section.

\begin{Thm}\label{Theorem: main theorem for cohomology}
The additive decomposition $\underset{x\in X}{\bigoplus}C^*(C_G(x),k)$ of the Hochschild cohomology complex $C^*(kG,kG)$ of group algebra $kG$ has an $A_{\infty}$-algebra structure. The formulas are, up to a sign, given by:
\begin{eqnarray*}
			m_1=\partial,\qquad\qquad\qquad\qquad\qquad\qquad \\
			m_n(\widehat{{\varphi}_1},\dots ,\widehat{{\varphi}_n})(g_{1,1}\dots ,g_{1,j_1},g_{2,1},\dots ,g_{2,j_2},\dots ,g_{n,1},\dots ,g_{n,j_n}) \\
			=\sum\pm\widehat{{\varphi}_1}(h_{1,1},\dots ,h_{1,j_1})\cdots \widehat{{\varphi}_n}(h_{n,1}\dots ,h_{n,j_n})
\end{eqnarray*}
for $n\geq 2$ and for $\widehat{\varphi}_s:{\overline{C_G(x_s)}}^{\times j_s}\rightarrow k$, $x_s\in X$, $s=1,2,\cdots,n$, and the sequence 
\[
(h_{1,1},\dots ,h_{1,j_1},\cdots,h_{n,1}\dots ,h_{n,j_n})
\]
is determined from the sequence (which lies in $\overline{C_G(v)}$ for some given $v\in X$) 
\[
(g_{1,1}\dots ,g_{1,j_1},\dots ,g_{n,1},\dots ,g_{n,j_n})
\]			
by combing a sequence of combined transformations of type $\alpha$ or type $\beta$ with the action $\iota^{*,v}$.
\end{Thm}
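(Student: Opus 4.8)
The existence of the $A_\infty$-structure will be immediate: since $(C^*(kG,kG),\delta,\cup)$ is a differential graded algebra and Lemma~\ref{Lemma: add Hochschild cohomology} provides a strong homotopy deformation retract onto $\bigoplus_{x\in X}C^*(C_G(x),k)$, the homotopy transfer theorem endows the latter with an $A_\infty$-structure $\{m_n\}_{n\ge 1}$ with $m_1=\partial$, and for $n\ge 2$
\[
m_n=\sum_{t\in PBT_n}\pm\, m_t,
\]
where $m_t$ decorates the $n$ leaves of $t$ with $\rho^*$, the $n-1$ internal vertices with $m_2=\cup$, the $n-2$ internal edges with $s^*$, and the root with $\iota^*$. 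So the whole content of the theorem is to unpack each $m_t$ into the asserted closed form.

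First I would fix $t\in PBT_n$ and process it from the leaves toward the root by induction on subtrees. For a subtree $t'$ consuming the inputs $\widehat{\varphi}_i,\dots,\widehat{\varphi}_j$, write $O(t')$ for the cochain it produces: $O(t')=\rho^{x_i,j_i}(\widehat{\varphi}_i)$ if $t'$ is a leaf, $O(t')=s^*\big(O(t'_{\mathrm{L}})\cup O(t'_{\mathrm{R}})\big)$ if $t'$ is internal but not the root of $t$, and $O(t')=\iota^*\big(O(t'_{\mathrm{L}})\cup O(t'_{\mathrm{R}})\big)$ if $t'=t$. The inductive claim I would prove, on the height of $t'$, is: for every internal non-root $t'$, $O(t')\in\bigoplus_z\mathcal H^{z,*}$ and, writing its value on $(g_1,\dots,g_p)$,
\[
O(t')(g_{1,p})=\sum_{z}\sum\;\pm\,\widehat{\varphi}_i(\mathbf{h}^{(i)})\cdots\widehat{\varphi}_j(\mathbf{h}^{(j)})\;z_\ell\;g_1\cdots g_p,
\]
where the group sequences $\mathbf{h}^{(i)},\dots,\mathbf{h}^{(j)}$ are obtained from $(g_1,\dots,g_p)$ together with the chosen class $z$ by the composite of the $\spadesuit$-transformations prescribed by the local operations — each an $\alpha$- or $\beta$-operation in one of the four flavours $\alpha_{i+,j+,+}$, $\beta_{i+,j+,+}$ with $i,j\in\{0,1\}$ — read off along $t'$. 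For $t'=t$ the root $\iota^*$ removes the $kG$-coefficient and leaves exactly the $k$-valued cochain on $\overline{C_G(v)}^{\times(\sum_s j_s)}$ of the statement.

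The inductive step is the actual computation, and it is precisely what the computations of $\alpha_{i+,j+,+}$ and $\beta_{i+,j+,+}$ preceding the theorem supply, combined with the ordinary cup product on $C^*(kG,kG)$ (cf.\ the Case~1 formula underlying Theorem~\ref{Thm: Main 1}) and the explicit shapes of $\iota^*,\rho^*,s^*$ from Lemma~\ref{Lemma: add Hochschild cohomology}. Given $O(t'_{\mathrm{L}})$ and $O(t'_{\mathrm{R}})$ of the inductive form, $O(t'_{\mathrm{L}})\cup O(t'_{\mathrm{R}})$ again lies in $\bigoplus_z\mathcal H^{z,*}$ and is a sum of products $\prod\widehat{\varphi}_\ell(\cdots)$ times a group monomial in which the arguments coming from the right factor are re-indexed by one more $\spadesuit$-step; applying $s^*$ (resp.\ $\iota^*$) then produces, through its $\sum_u(-1)^u$ expansion, one further round of $\spadesuit$-substitutions and records the coefficient of $z_\ell$. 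Along the way one checks exhaustiveness: the four subtypes $\alpha_{i+,j+,+}$, $\beta_{i+,j+,+}$, indexed by whether each incoming edge is a leaf ($\hat{\iota}^*=\rho^*$, index $1$) or an internal edge ($\mathrm{id}$, index $0$), cover every local configuration arising in $PBT_n$ — the bottom-layer vertices are $\alpha$-type (with $\alpha_{1+,1+,+}$ only for $n=2$) and all the others $\beta$-type (with $\beta_{1+,1+,+}$ only at the topmost vertex when $n\ge 3$). This is where the present treatment repairs the incomplete local list of \cite{Li20}.

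I expect the main obstacle to be the bookkeeping, which is why the formula is stated only up to a sign. Three things need care: (i) the signs, assembled from the homotopy-transfer factor $(-1)^{r+st}$, the Koszul signs in $\cup$, and the $(-1)^u$ in each $s^*$, summed over all of $PBT_n$; (ii) well-definedness of the composite $\spadesuit$-transformations — that at every stage the produced sequences genuinely lie in $\overline{C_G(\cdot)}$ and that the final sequence depends only on the shape of $t$, on the outer sequence $(g_{1,1},\dots,g_{n,j_n})$, and on the terminal class $v$ — which rests repeatedly on the coset identities $\gamma_{i,x}g=h\gamma_{s,x}$ and on $g_1\cdots g_n C_x=C_x g_1\cdots g_n$, the latter being what makes $\mathcal H^{x,*}$ a subcomplex; and (iii) the degenerate placements, at the root and at the leftmost bottom vertex, of the local operations. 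None of these is conceptually deep, but (i) and (ii) are where all the work lies; they are carried out case by case in the $\alpha_{i+,j+,+}$ and $\beta_{i+,j+,+}$ computations, and the theorem then follows from the bottom-up induction above.
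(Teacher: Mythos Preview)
Your proposal is correct and follows essentially the same approach as the paper: the existence of the $A_\infty$-structure comes from the homotopy transfer theorem applied to the retract of Lemma~\ref{Lemma: add Hochschild cohomology}, and the explicit shape of $m_n$ is obtained by processing each $t\in PBT_n$ from the leaves to the root, with your inductive invariant on non-root subtrees being exactly the paper's Claim~\ref{Claim: beta result}, established by the $\beta_{i+,j+,+}$ computations, and the final root step being the $\alpha_{i+,j+,+}$ computation of Section~\ref{Section: alpha in cohomology}. The only minor imprecision is your remark that the cup product ``re-indexes the arguments from the right factor by one more $\spadesuit$-step'': in fact the cup itself only concatenates, and the $\spadesuit$-substitutions enter through $\rho^*$ at the leaves and through the $(-1)^u$-expansion of $s^*$ on internal edges, exactly as the paper's transformation diagrams record.
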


We will prove this theorem in Section~\ref{Section: alpha in cohomology}, and to prove this theorem, we need the following claim.
    
\begin{Claim}\label{Claim: beta result}
    Let $\widehat{\varphi}_s$, for $s=1,\cdots,n$, be the elements in $C^{j_s}(C_G(x_s), k) $, with $j_s\ge 0$ and $x_s\in X$. The result of performing $n-1$ times $\beta$-operations on them is, up to a sign, given by
    \[
    \beta(\widehat{{\varphi}_1},\dots ,\widehat{{\varphi}_n})(g_1,\dots ,g_{j_1+\cdots+j_n-n+1})
    \]
    \[
    =\sum_{z\in X}\sum_{a=1}^{n_z} \pm\widehat{{\varphi}_1}(h_{1,1},\dots ,h_{1,j_1})\cdots \widehat{{\varphi}_n}(h_{n,1}\dots ,h_{n,j_n})z_a g_1\dots g_{j_1+\cdots+j_n-n+1},
    \]
    and the suquence 
    \[
    (h_{1,1},\dots ,h_{1,j_1},\cdots,h_{n,1}\dots ,h_{n,j_n})
    \]
    is determined from the sequence 
    \[
    (g_1,\dots ,g_{j_1+\cdots+j_n-n+1})\in \overline{G}^{\times j_1+\cdots+ j_n-n+1 }
    \]
    by combing a sequence of combined transformation of type $\beta$ about $z,a$.
\end{Claim}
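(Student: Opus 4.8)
The plan is to prove Claim~\ref{Claim: beta result} by induction on $n$, the number of cochains. For the base case $n=2$ there is nothing new: a single $\beta$-operation is $\beta_{1+,1+,+}$, whose formula was computed explicitly above in Case~1 of Section~\ref{Section: computation for alpha and beta}; reading off that formula gives exactly the asserted shape, with the tuples $(h_{1,1},\dots,h_{1,j_1})$ and $(h_{2,1},\dots,h_{2,j_2})$ produced by the two applications of $\spadesuit$ prescribed there, and with the element landing in $\bigoplus_{z\in X}\mathcal{D}^{<0}(kG,kG)$ via $s^z$, written out in the basis $\{z_a g_1\cdots g_{j_1+j_2-1}\}$. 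So the base case is just unpacking a formula already in the paper.

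For the inductive step, suppose the claim holds for $n-1$ cochains, so that $\beta(\widehat\varphi_1,\dots,\widehat\varphi_{n-1})$ is a sum over $z\in X$ and $a\in\{1,\dots,n_z\}$ of terms $\pm\widehat\varphi_1(\cdots)\cdots\widehat\varphi_{n-1}(\cdots)\,z_a g_1\cdots g_{j_1+\cdots+j_{n-1}-n+2}$, with the inner $h$-sequences obtained from $(g_1,\dots)$ by a chain of combined $\beta$-transformations. One then has to apply one more $\beta$-operation, which (since the output of a $\beta$ lies in $\mathcal{D}^{<0}$, i.e. in homology, as a sum of basis monomials of the form $z_a g_1\cdots g_\ell$, and $\widehat\varphi_n$ is a cochain in $\mathcal{D}^{\ge 0}$) is of type $\beta_{1-,1+,-}$ or $\beta_{0-,1+,-}$ or $\beta_{1-,0+,-}$ or $\beta_{0-,0+,-}$, depending on whether either factor came from a bottom-layer leaf or from a previous internal vertex. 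The key point is that $\beta$-operations and the cup product are $k$-linear, so it suffices to apply $\beta_{*-,*+,-}$ to a single basis monomial $z_a g_1\cdots g_\ell$ paired with $\widehat\varphi_n$; for this one invokes the explicit formula for $\beta_{1-,1+,-}$ worked out in Case~5 above (together with its $i,j\in\{0,1\}$ variants, which only differ by whether one inserts a $\gamma_{s,z}$ in front). Reading off that formula, the result is again a sum over $X$, over coset representatives, and over an index $u$ coming from the homotopy $s_z$, of $\pm$ (product of the $\widehat\varphi$-values) times a new basis monomial; collecting the action of $\spadesuit$ in Case~5 into the previously assembled chain of $\beta$-transformations, and recording the evaluation points of $\widehat\varphi_n$ via $\spadesuit_{y,j}$, produces precisely a tuple of the required form with one more combined $\beta$-transformation prepended. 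The signs are absorbed into the $\pm$, consistently with the homotopy transfer theorem sign $(-1)^{r+st}$ and the sign $(-1)^{\text{length}}$ in the definition of $s^*$; since the statement is only claimed up to sign, we do not track them precisely, though one should remark that they are determined by the Koszul sign rule applied to the composite $i,h,m_2,p$ assignment on each planar binary tree.

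The main obstacle I anticipate is bookkeeping rather than conceptual: one must check that the ``combined transformation of type $\beta$ about $z,a$'' is genuinely closed under the operation of prepending one more layer, i.e.\ that the $\spadesuit$-maps appearing in $\beta_{1-,1+,-}$ (Case~5) compose correctly with those produced by the inductive hypothesis, so that the final $h$-tuples are a well-defined function of $(g_1,\dots)$, $z$, and $a$ alone. Concretely, the issue is that the monomials output by the inner $\beta$ are indexed by a coset representative $\gamma_{a,z}$ and the fresh $s_z$-index $u$, and one needs that feeding $z_a g_1\cdots g_\ell$ into $\spadesuit_{z,\bullet}$ and then into the next cup product and homotopy returns data that depends only on the allowed indices; this is exactly the kind of ``associativity of $\spadesuit$'' statement implicit in Lemma~\ref{Lemma: add Hochschild cohomology} and Lemma~\ref{Lemma: add Hochschild homology}, and I would isolate it as a small sublemma about iterated coset decompositions $\gamma_{i,x}g = h\,\gamma_{j,x}$ before running the induction. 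Once that sublemma is in hand, the inductive step is a direct substitution of the Case~5 (and its $0/1$-variants) formula into the inductive hypothesis, followed by relabelling.
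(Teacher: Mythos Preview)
Your inductive strategy is the right one, and it is essentially what the paper does implicitly in Section~\ref{section:Ainfinity-on-the-additive-decomposition}: it computes $\beta_{1+,1+,+}$ as the base case and then $\beta_{0+,1+,+}$, $\beta_{1+,0+,+}$, $\beta_{0+,0+,+}$ as the inductive step, showing each output retains the shape asserted in the Claim. However, there is a genuine error in your inductive step that would derail the argument.

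You assert that ``the output of a $\beta$ lies in $\mathcal{D}^{<0}$, i.e.\ in homology,'' and therefore propose to use $\beta_{*-,*+,-}$ and the formula from Case~5 for the next layer. This is wrong. The homotopy $s^{z,n+m}$ used in $\beta_{1+,1+,+}$ maps $\mathcal{H}^{z,n+m}\to\mathcal{H}^{z,n+m-1}\subset C^{n+m-1}(kG,kG)$, which is still in \emph{non-negative} cohomological degree; the expression $z_a g_1\cdots g_{j_1+\cdots+j_n-n+1}$ appearing in the Claim is the \emph{value} of a Hochschild cochain in $kG$, not a Hochschild chain. So the output of each $\beta$-layer is a cochain in $\mathcal{D}^{\ge 0}(kG,kG)$, and the next $\beta$-operation is of type $\beta_{0+,1+,+}$ (or $\beta_{1+,0+,+}$, $\beta_{0+,0+,+}$), not $\beta_{*-,*+,-}$. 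Case~5 of Section~\ref{Section: computation for alpha and beta} is irrelevant here; the relevant formulas are those in Section~3.3.2 of the paper, where one sees directly that applying $s^z$ to $\varphi_x\cup\rho^{y,m}(\widehat\varphi_y)$ with $\varphi_x$ of the inductively assumed shape again produces a sum of $\pm(\text{product of }\widehat\varphi_i\text{-values})\,z_a g_1\cdots g_\ell$.

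Once you correct this, your proposed sublemma about iterated coset decompositions is unnecessary: the $\spadesuit$-processes in $\beta_{0+,1+,+}$ act on the fresh sequence $(h_{a,1}^z,\dots,h_{a,l}^z,\gamma_{s_a^l,z},g_{l+1},\dots)$ produced by the $s^l$-transformation, and the inductive hypothesis is then applied to the first block of that sequence verbatim. The composition of transformations is literal sequence substitution, not a compatibility of $\spadesuit$-maps across different centralizers.
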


During our computation, we used process $\spadesuit$ many times. Recall that $\{ h_{i,1}^x, \cdots,  h_{i,n}^x\}=\spadesuit_{x,i}\{ g_1,\cdots, g_n\} $ means that the sequence $\{ h_{i,1}^x, \cdots,  h_{i,n}^x\}$ is determined by the sequence $\{ g_1,\cdots, g_n\} $, $x\in X$ and $i\in \{1,\cdots,n_x\}$ from the following process:
\[
\gamma_{i, x}g_1=h_{i,1}^x \gamma_{s_i^1, x}, \quad \gamma_{s_i^1, x}g_2=h_{i,2}^x \gamma_{s_i^2, x}, \ \cdots\ ,  
\gamma_{s_i^{n-1}, x}g_n=h_{i,n}^x \gamma_{s_i^n, x}.
\]

\subsubsection{\texorpdfstring{$\alpha_{i+,j+,+}$}{alpha (i+, j+,+) }}\label{Section: alpha in cohomology}\

(1) Consider the planar binary tree $\alpha_{1+, 1+, +}$,
\[
\begin{tikzcd}[every matrix/.style={minimum width=0.3em,  minimum height=0.2em}]
{\underset{x \in X}{\bigoplus}\widehat{C}^{\ge 0} (C_G(x), k)} \arrow[rdd,  phantom] \arrow[rdd,  "\hat{\iota}^*", no head] & & {\underset{x \in X}{\bigoplus}\widehat{C}^{\ge 0} (C_G(x), k)} \arrow[ldd, "\hat{\iota}^*", no head]  \\
& & & & \\
& m_2 \arrow[dd,  "\hat{\rho}^*",  no head] & {\mathcal{D}^{\ge 0}(k G,  k G)}  \\
& & &  {,} \\
& \alpha_{1+, 1+, +}                             
\end{tikzcd}
\]
we write this graph as $\alpha_{1+, 1+, +}(\widehat{\varphi}_x, \widehat{\varphi}_y)= \widehat{m}_2 (\widehat{\varphi}_x, \widehat{\varphi}_y)$.

Fix $z \in X$, by Theorem~\ref{Thm: Main 1}, for $(h_{1,n+m}) \in \overline{C_G(z) }^{\times (n+m)}$, we have
\[
\widehat{m}_2 \left(\widehat{\varphi}_x, \widehat{\varphi}_y\right)_z (h_{1,n+m})= \sum_{(i, j) \in I_z} \widehat{\varphi}_x\left(h_{i,1}^x, \cdots, h_{i,n}^x\right) \widehat{\varphi}_{y}\left(h_{j,1}^y, \cdots, h_{j,m}^y\right).
\]
We observe that the key step to obtain the result from this planar binary tree is to obtain sequence 
\[
\{ h_{i,1}^x, \cdots, h_{i,n}^x, h_{j,1}^y, \cdots, h_{j,m}^y \}
\]
from the sequence $\{ h_1, \cdots, h_{n+m} \}$.

Therefore, we can regard the above calculation process as, for the planar binary tree $\alpha_{1+, 1+, +}$, we input $n+m$ elements $h_1,\cdots,h_{n+m}\in \overline{C_G(z)} $, and output $n+m$ elements $h_{i,1}^x,\cdots, h_{i,n}^x\in \overline{C_G(x)} $ and $h_{j,1}^y,\cdots, h_{j,m}^y\in \overline{C_G(y)} $. We simplify the calculation process as follows:
\[
\xymatrix{
	g_1\ar@{-}[dd]^{\spadesuit_{x, i}} & \cdots & g_n\ar@{-}[dd]^{\spadesuit_{x, i}}  &
	g_{n+1}\ar@{-}[dd] ^{\spadesuit_{y, j}} & \cdots & g_{n+m}\ar@{-}[dd]^{\spadesuit_{y, j}}\\
	\quad & \cdots & \quad & \quad & \cdots & \quad\\
	h_{i,1}^x & \cdots & h_{i,n}^x & h_{j,1}^y & \cdots & h_{j,m}^y, }
\]
the second row is determined by the first row by doing $\spadesuit$ process twice: $\{ h_{i,1}^x, \cdots, h_{i,n}^x\}=\spadesuit_{x,i}\{h_1, \cdots,h_n\}\subset \overline{C_G(x)} $ and $\{ h_{j,1}^y,\cdots, h_{j,m}^y \}= \spadesuit_{y,j}\{ h_{n+1},\cdots,h_{n+m}\} \subset \overline{C_G(y)}$, for fixed $(i,j)\in I_z$.

Thus the planar binary tree marked as type $\alpha_{1+, 1+, +} $ makes the sequence $\{ h_1,\cdots, h_n, h_{n+1},\cdots, h_{n+m} \}$ into the sequence $\{ h_{i,1}^x, \cdots, h_{i,n}^x, h_{j,1}^y, \cdots, h_{j,m}^y \}$, and we call this process $\alpha_{1+, 1+, +} $ transformation.

\vspace{1.5em}

(2) Now we consider the planar binary trees marked as type $\alpha_{0+, 1+, +}$, $\alpha_{1+, 0+, +}$ and $\alpha_{0+, 0+, +}$.

We now present a detailed analysis using case $\alpha_{0+,1+,+}$ as an example, while the other two cases can be derived analogously. Let $\varphi_x\in \mathcal{H}^{x,n} $ and $\widehat{\varphi}_y\in C^m(C_G(y),k)$, for $x,y\in X$ and $m,n\ge 0$,
\[
\alpha_{0+,1+,+}(\varphi_x, \widehat{\varphi}_y)= \iota^{n+m}( \varphi_x\cup \rho^{y,m}(\widehat{\varphi}_y ) ),
\]
here, the element $\varphi_x$ at the $0$-end is the result of a preceding sequence of $\beta$-type operations. According to  Claim~\ref{Claim: beta result}, we assume that the output of the previous $\beta$-type operation is 
\[
  \varphi_x(g_1,\cdots,g_n)  =\sum_{i=1}^{n_x} \pm\widehat{{\varphi}_1}(h_1,\dots ,h_{i_1}) \widehat{{\varphi}_2}(h_{i_1+1},\dots ,h_{i_2}) \cdots \widehat{{\varphi}_k}(h_{i_{k-1}+1}\dots ,h_{n+k-1})x_i g_1\cdots g_n.
\]

For $z\in X$ and sequence $\{ g_1,\cdots,g_{n+m} \}\in \overline{C_G(z) }^{n+m}$, we have
\[
\iota^{z,n+m}( \varphi_x\cup \rho^{y,m}(\widehat{\varphi}_y ) )=\sum_{(i,j)\in I_z}\varphi_x (g_1,\cdots, g_n) \widehat{\varphi}_y( h_{j,1}^y,\cdots,h_{j,m}^y )y_j g_n^{-1}\cdots g_1^{-1}z^{-1}.
\]
Thus, using the formula for $\varphi_x$, we obtain
\[
\iota^{z,n+m}( \varphi_x\cup \rho^{y,m}(\widehat{\varphi}_y ) )=\sum_{(i,j)\in I_z} \pm\widehat{{\varphi}_1}(h_1,\dots ,h_{i_1}) \widehat{{\varphi}_2}(h_{i_1+1},\dots ,h_{i_2}) \cdots \widehat{{\varphi}_k}(h_{i_{k-1}+1}\dots ,h_{n+k-1}),
\]
where $I_z=\{ (i,j)\ |\ 1\le i\le n_x, 1\le j\le n_y, x_i g_1\cdots g_n y_j g_n^{-1}\cdots g_1^{-1}=z \}$ and 
\[
\{ h_{j,1}^y,\cdots,h_{j,m}^y \}=\spadesuit_{y,j}\{ g_{n+1}, \cdots,g_{n+m} .\} 
\]
The other two cases are similar, then we prove Theorem~\ref{Theorem: main theorem for cohomology}.

Similar to the transformation $\alpha_{1+,1+,+}$, we now present the transformation of $\alpha_{0+,1+,+}$. For the sake of convenience in the following discussion, we denote the input sequence at the 0-end as $\{ g_1,\cdots,g_n \}$, and the input sequence at the 1-end as $\{ g_{n+1},\cdots,g_{n+m} \}$. For a fixed $z\in X$ and $(i,j)\in I_z$, we abbreviate this computation process as ($\alpha_{0+,1+,+}$-transformation):
\[
\xymatrix{
	g_1\ar@{-}[dd]^{\mathrm{id}} & \cdots & g_n\ar@{-}[dd]^{\mathrm{id}}  &
	g_{n+1}\ar@{-}[dd] ^{\spadesuit_{y, j}} & \cdots & g_{n+m}\ar@{-}[dd]^{\spadesuit_{y, j}}\\
	\quad & \cdots & \quad & \quad & \cdots & \quad\\
	g_1 & \cdots & g_n & h_{j,1}^y & \cdots & h_{j,m}^y, }
\]
the second row is determined by the first: elements on the 0-end remain unchanged, while those on the 1-end are obtained via $\spadesuit_{y,j}$-transformation, i.e., $\{ h_{j,1}^y,\cdots,h_{j,m}^y \}=\spadesuit_{y,j}\{ g_{n+1}, \cdots,g_{n+m} \} $.

In the same manner, we define the $\alpha_{1+,0+,+}$-transformation:

Let $\{g_1,\dots,g_n\}$ be the input at the 1-end and $\{g_{n+1},\dots,g_{n+m}\}$ the input at the 0-end. For a fixed $z\in X$ and $(i,j)\in I_z$, we abbreviate this computation process as ($\alpha_{1+,0+,+}$-transformation):
\[
\xymatrix{
	g_1\ar@{-}[dd]^{\spadesuit_{x,i}} & \cdots & g_n\ar@{-}[dd]^{\spadesuit_{x,i}}  &
	g_{n+1}\ar@{-}[dd] ^{\mathrm{id}} & \cdots & g_{n+m}\ar@{-}[dd]^{\mathrm{id}}\\
	\quad & \cdots & \quad & \quad & \cdots & \quad\\
	h_{i,1}^x & \cdots & h_{i,n}^x & g_{n+1} & \cdots & g_{n+m}. }
\]

\subsubsection{\texorpdfstring{$\beta_{i+,j+,+}$}{beta (i+,j+,+)}}\

(1) Consider the planar binary tree marked as $\beta_{1+, 1+, +}$. 

Recall the result in Section~\ref{Section: computation for alpha and beta}, for fixed $z\in X$ and $( g_1,\cdots, g_{m+n-1} )\in \overline{G}^{\times m+n-1} $
\begin{align*}
    \beta_{1+, 1+, +} \left(\widehat{\varphi}_x, \widehat{\varphi}_y\right)_z (g_{1, n+m-1})
    &= s^{z,n+m} ( \rho^{x,n}(\widehat{\varphi}_x) \cup \rho^{y,m}(\widehat{\varphi}_y))_z (g_{1, n+m-1})\\
    &= \sum\limits_{l=0}^{m+n-1}\sum\limits_{a=1}^{n_z}(-1)^{l}e_{a, l}^1 z_a g_1\cdots   g_{n+m-1},
\end{align*}
with $e_{a, l}^1$ is, for each $l\in \{0,1,\ldots,m+n-1\}$ and $a\in \{1,\ldots,n_z\}$, is given by
\begin{eqnarray*}
e_{a, l}^1
&=& \iota^{z,n+m} {(\rho^{x,n}  \left(\widehat{\varphi}_x\right) \cup \rho^{y,m} \left(\widehat{\varphi}_y \right))_z} 
(h_{a, 1}^z, \dots , h_{a, l}^z, \gamma_{s_a^l, z}, g_{l+1}, \dots , g_{n+m-1})\\
&=& \alpha_{1+,1+,+}(\widehat{\varphi}_x, \widehat{\varphi}_y)_z (h_{a, 1}^z, \dots , h_{a, l}^z, \gamma_{s_a^l, z}, g_{l+1}, \dots , g_{n+m-1}) \\
&=&\sum\limits_{(i, j)\in I_z^{a,l} }\widehat{\varphi}_x(h_{i, 1}^x, \dots , h_{i, n}^x) \widehat{\varphi}_y (h_{j, 1}^y, \dots , h_{j, m}^y), 
\end{eqnarray*}
where
\begin{itemize}
    \item $(h_{a, 1}^z, \dots , h_{a, n+m-1}^z)=\spadesuit_{z, a}(g_1, \dots , g_{n+m-1})$,

    \item $I_z^{a,l}=\{ (i, j)|\ 1\le i\le n_x, 1\le j\le n_y \text{ and } x_i(g_1^{a,l} \cdots  g_n^{a,l} )y_j(g_1^{a,l} \cdots  g_n^{a,l})^{-1}=z \}$,  here we rewrite 
    \[
    (g_1^{a,l} , g_2^{a,l} , \cdots, g_{n+m}^{a,l} )=(h_{a, 1}^z, \dots , h_{a, l}^z, \gamma_{s_a^l, z}, g_{l+1}, \dots , g_{n+m-1}),
    \]
    and for each $(i,j)\in I_z^{a,l}$,

    \item $\{ h_{i, 1}^x, \dots , h_{i, n}^x \}=\spadesuit_{x,i}\{ g_1^{a,l}, \cdots, g_n^{a,l}\}$,

    \item $\{ h_{j, 1}^y, \dots , h_{j, m}^y \}=\spadesuit_{y,j}\{ g_{n+1}^{a,l}, \cdots, g_{n+m}^{a,l}\}$.
\end{itemize}
From the analysis of $\alpha_{1+1+,+}$, it is evident that the computation of $e_{a,l}^1$ corresponds to applying transformation $\alpha_{1+,1+,+}$ to an input of $\{ h_{a, 1}^z, \dots , h_{a, l}^z, \gamma_{s_a^l, z}, g_{l+1}, \dots , g_{n+m-1} \}$. Hence, the computation of $\beta_{1+1+,+} $ can be concisely illustrated by the following diagram:
\[
\xymatrix@R=1.0pc@C=1.0pc{
g_1\ar@{-}[dd] ^{\spadesuit_{z, a}} &
\cdots &
g_l\ar@{-}^{\spadesuit_{z, a}}[dd] \ar@{-}[ddr] &
g_{l+1}\ar@{-}[ddr]^{id} &
\cdots &
g_{n+m-1}\ar@{-}[rdd]^{id} &
\quad\\
\quad & \cdots & \quad & \quad & \cdots & \cdots & \quad \\
h_{a, 1}^z & \cdots & h_{a, l}^z & \gamma_{s_a^l,z} & g_{l+1} & \cdots & g_{n+m-1}\\
g_1^{a,l} & \cdots & g_l^{a,l} & g_{l+1}^{a,l} & g_{l+2}^{a,l} & \cdots & g_{n+m}^{a,l} \\
}
\]
\[
\xymatrix@R=1.0pc@C=1.0pc{
g_1^{a,l} \ar@{-}[dd] ^{\spadesuit_{x, i}} &
\cdots &
g_{n}^{a,l} \ar@{-}[dd] ^{\spadesuit_{x, i}}&
g_{n+1}^{a,l} \ar@{-}[dd] ^{\spadesuit_{y, j}}&
\cdots &
g_{n+m}^{a,l} \ar@{-}[dd]^{\spadesuit_{y, j}}\\
\quad & \cdots & \quad & \quad & \cdots & \quad \\
h_{i, 1}^x &
\cdots &
h_{i, n}^x &
h_{j,1}^y &
\cdots &
h_{j,m}^y
}
\]
This diagram can be understood in three steps:

\textbf{Step 1.} The second row is determined by the first row, in accordance with the explicit formula of the homotopy $s^*$. Concretly, $\{ h_{a,1}^z,\cdots, h_{a,l}^z \}=\spadesuit_{z,a}\{ g_1, \cdots,  g_l\} $, and an element $\gamma_{s_a^l, z} $ at the $l+1$-th position. We denote this transformation as an $s^l$-transformation.

\textbf{Step 2.} We relabel the elements in the second row as $\{ g_1^{a,l},\cdots,g_{n+m}^{a,l} \}$. This relabeling does not change the elements themselves, but only their notation. The purpose of this step is to avoid having to distinguish between different cases depending on the value of $l$ and $m$.

\textbf{Step 3.} The third row is derived from the second row by applying the transformation $ \alpha_{1+,1+,+} $ to the sequence $\{ g_1^{a,l},\cdots,g_{n+m}^{a,l} \}$. Concretely, $\{ h_{i, 1}^x, \dots , h_{i, n}^x \}=\spadesuit_{x,i}\{ g_1^{a,l}, \cdots, g_n^{a,l}\}$
and $\{ h_{j, 1}^y, \dots , h_{j, m}^y \}=\spadesuit_{y,j}\{ g_{n+1}^{a,l}, \cdots, g_{n+m}^{a,l}\}$.

Hence, this transformation maps sequence
\[
\{ g_1,\cdots,g_{n+m-1} \}
\]
to the sequence 
\[
\{ h_{i, 1}^x, \dots , h_{i, n}^x,  h_{j, 1}^y, \dots , h_{j, m}^y \},
\]
and we denote it as \textbf{$\beta_{1+,1+,+}$-transformation}.

(2) Now we compute $ \beta_{0+,1+,+} $, $\beta_{1+,0+,+}$ and $\beta_{0+,0+,+}$.

For fixed $z\in X$ and $( g_1,\cdots, g_{m+n-1} )\in \overline{G}^{\times m+n-1} $
\begin{align*}
    \beta_{0+, 1+, +} \left(\varphi_x, \widehat{\varphi}_y\right)_z (g_{1, n+m-1})
    &= s^{z,n+m} ( \varphi_x \cup \rho^{y,m}(\widehat{\varphi}_y))_z (g_{1, n+m-1})\\
    &= \sum\limits_{l=0}^{m+n-1}\sum\limits_{a=1}^{n_z}(-1)^{l}e_{a, l}^1 z_a g_1\cdots   g_{n+m-1},
\end{align*}
with $e_{a, l}^1$ is, for each $l\in \{0,1,\ldots,m+n-1\}$ and $a\in \{1,\ldots,n_z\}$, is given by
\begin{eqnarray*}
e_{a, l}^1
&=& \iota^{z,n+m} {(\varphi_x \cup \rho^{y,m} \left(\widehat{\varphi}_y \right))_z}
(h_{a, 1}^z, \dots , h_{a, l}^z, \gamma_{s_a^l, z}, g_{l+1}, \dots , g_{n+m-1})\\
&=&\alpha_{0+,1+,+} (\varphi_x,\widehat{\varphi}_y)_z (h_{a, 1}^z, \dots , h_{a, l}^z, \gamma_{s_a^l, z}, g_{l+1}, \dots , g_{n+m-1})\\
&=&\sum\limits_{(i, j)\in I_z^{a,l}} \varphi_x(g_1^{a, l}, \dots , g_n^{a, l}) \widehat{\varphi}_y (h_{j, 1}^y, \dots , h_{j, m}^y)y_j(g_1^{a,l}\cdots g_n^{a,l})^{-1} z^{-1}.
\end{eqnarray*}
Similar to $\beta_{1+,1+,+}$, we can illustrate $\beta_{0+1+,+}$-transformation as follows:
\[
\xymatrix@R=1.0pc@C=1.0pc{
g_1\ar@{-}[dd] ^{\spadesuit_{z, a}} &
\cdots &
g_l\ar@{-}^{\spadesuit_{z, a}}[dd] \ar@{-}[ddr] &
g_{l+1}\ar@{-}[ddr]^{id} &
\cdots &
g_{n+m-1}\ar@{-}[rdd]^{id} &
\quad\\
\quad & \cdots & \quad & \quad & \cdots & \cdots & \quad \\
h_{a, 1}^z & \cdots & h_{a, l}^z & \gamma_{s_a^l,z} & g_{l+1} & \cdots & g_{n+m-1}\\
g_1^{a,l} & \cdots & g_l^{a,l} & g_{l+1}^{a,l} & g_{l+2}^{a,l} & \cdots & g_{n+m}^{a,l} \\
}
\]
\[
\xymatrix@R=1.0pc@C=1.0pc{
g_1^{a,l} \ar@{-}[dd] ^{\mathrm{id}} &
\cdots &
g_n^{a,l} \ar@{-}[dd] ^{\mathrm{id}}&
g_{n+1}^{a,l} \ar@{-}[dd] ^{\spadesuit_{y, j}}&
\cdots &
g_{n+m}^{a,l} \ar@{-}[dd]^{\spadesuit_{y, j}}\\
\quad & \cdots & \quad & \quad & \cdots & \quad \\
g_1^{a,l} &
\cdots &
g_n^{a,l} &
h_{j,1}^y &
\cdots &
h_{j,m}^y
}
\]
This diagram can also interpreted in three steps. The first two are the same as in the $\beta_{1+1+,+}$-transformation, while in the third step, the operation used is replaced by $\alpha_{0+,1+,+}$.

Similarly, we define $\beta_{1+,0+,+}$-transformation:
\[
\xymatrix@R=1.0pc@C=1.0pc{
g_1\ar@{-}[dd] ^{\spadesuit_{z, a}} &
\cdots &
g_l\ar@{-}^{\spadesuit_{z, a}}[dd] \ar@{-}[ddr] &
g_{l+1}\ar@{-}[ddr]^{id} &
\cdots &
g_{n+m-1}\ar@{-}[rdd]^{id} &
\quad\\
\quad & \cdots & \quad & \quad & \cdots & \cdots & \quad \\
h_{a, 1}^z & \cdots & h_{a, l}^z & \gamma_{s_a^l,z} & g_{l+1} & \cdots & g_{n+m-1}\\
g_1^{a,l} & \cdots & g_l^{a,l} & g_{l+1}^{a,l} & g_{l+2}^{a,l} & \cdots & g_{n+m}^{a,l} \\
}
\]
\[
\xymatrix@R=1.0pc@C=1.0pc{
g_1^{a,l} \ar@{-}[dd] ^{\spadesuit_{x,i}} &
\cdots &
g_n^{a,l} \ar@{-}[dd] ^{\spadesuit_{x,i}}&
g_{n+1}^{a,l} \ar@{-}[dd] ^{\mathrm{id}}&
\cdots &
g_{n+m}^{a,l} \ar@{-}[dd]^{\mathrm{id}}\\
\quad & \cdots & \quad & \quad & \cdots & \quad \\
h_{i, 1}^x &
\cdots &
h_{i, n}^x &
g_{n+1}^{a,l} &
\cdots &
g_{n+m}^{a,l}
}
\]
this transformation means that we use $s^l$-transformation firstly, then $\alpha_{1+,0+,+}$-transformation.

Finally, we define $\beta_{0+,0+,+}$-transformation:
\[
\xymatrix@R=1.0pc@C=1.0pc{
g_1\ar@{-}[dd] ^{\spadesuit_{z, a}} &
\cdots &
g_l\ar@{-}^{\spadesuit_{z, a}}[dd] \ar@{-}[ddr] &
g_{l+1}\ar@{-}[ddr]^{id} &
\cdots &
g_{n+m-1}\ar@{-}[rdd]^{id} &
\quad\\
\quad & \cdots & \quad & \quad & \cdots & \cdots & \quad \\
h_{a, 1}^z & \cdots & h_{a, l}^z & \gamma_{s_a^l,z} & g_{l+1} & \cdots & g_{n+m-1}\\
g_1^{a,l} & \cdots & g_l^{a,l} & g_{l+1}^{a,l} & g_{l+2}^{a,l} & \cdots & g_{n+m}^{a,l} \\
}
\]
\[
\xymatrix@R=1.0pc@C=1.0pc{
g_1^{a,l} \ar@{-}[dd] ^{\mathrm{id}} &
\cdots &
g_n^{a,l} \ar@{-}[dd] ^{\mathrm{id}}&
g_{n+1}^{a,l} \ar@{-}[dd] ^{\mathrm{id}}&
\cdots &
g_{n+m}^{a,l} \ar@{-}[dd]^{\mathrm{id}}\\
\quad & \cdots & \quad & \quad & \cdots & \quad \\
g_1^{a,l} &
\cdots &
g_n^{a,l} &
g_{n+1}^{a,l} &
\cdots &
g_{n+m}^{a,l}
}
\]

\subsubsection{\texorpdfstring{Computation of $\widehat{m}_n$}{Computation of mn 1}}\

According to the analysis at the beginning of this section, we have Theorem~\ref{Theorem: main theorem for cohomology} and proved it in Section~\ref{Section: alpha in cohomology}. Now, we present a concrete example to help understand its content.

\begin{Ex}
    Let $\widehat{\varphi}_1: \overline{C_G(x)}^{\times m}\rightarrow k$,  $\widehat{\varphi}_2: \overline{C_G(y)}^{\times n}\rightarrow k$,  $\widehat{\varphi}_3: \overline{C_G(z)}^{\times p}\rightarrow k$,  $\widehat{\varphi}_4: \overline{C_G(u)}^{\times q}\rightarrow k$ for $x,y,z,u\in X$. Consider the following planar binary tree in $PBT_4$:
    \[
    \xymatrix@R=1.0pc@C=1.0pc{
		\ar@{-}[dr]& & \ar@{-}[dl]& & \ar@{-}[ddll]& & & \ar@{-}[dddllll] \\
		& \cup\ar@{-}[dr]^{s^*} & & & & & & \\
		& & \cup\ar@{-}[dr]^{s^*} & & & & &\\
		& & & \cup\ar@{-}[d] & & & &\\
		& & & & & & & }
    \]
    The operation corresponding to this planar binary tree is a summation term of 
    \[
    \widehat{m}_4(\widehat{\varphi}_1, \widehat{\varphi}_2, \widehat{\varphi}_3, \widehat{\varphi}_4)
    (g_1,  \cdots,  g_{m-1},  g_m,  \cdots,  g_{m+n-1},  g_{m+n},  \cdots,  g_{m+n+p-1},  \cdots,  g_{m+n+p+q-2}),
    \]
    specifically, this operation involves performing the following transformations sequence:
    $\beta_{1+, 1+, +}\times \mathrm{id}\times \mathrm{id} \rightarrow \beta_{0+, 1+, +}\times \mathrm{id} \rightarrow \alpha_{0+, 1+, +}$, that is, the operation is 
    \[
    \alpha_{0+, 1+, +}( \beta_{0+, 1+, +}(\beta_{1+, 1+, +}(\widehat{\varphi}_1, \widehat{\varphi}_2), \widehat{\varphi}_3), \widehat{\varphi}_4).
    \]
    To compute this operation precisely, we inpute the oringinal sequence 
    \[
    (g_1,  \cdots,  g_{m-1},  g_m,  \cdots,  g_{m+n-1},  g_{m+n},  \cdots, g_{m+n+p-2},  g_{m+n+p-1},  \cdots,  g_{m+n+p+q-2}),
    \]
    the sequence changes as follows in order:
    \[
    (g_1,  \cdots,  g_{m-1},  g_m,  \cdots,  g_{m+n-1},  g_{m+n},  \cdots, g_{m+n+p-2},  h_{d,1}^u,\cdots,h_{d,q}^u ),
    \]
    after the first transformation $\alpha_{0+,1+,+}$, and 
    \[
    (h_{a, 1},  \cdots,  h_{a, l},  \gamma, g_{l+1}, \cdots,  g_{m+n-2},  h_{c, 1}^z,  \cdots,  h_{c, p}^z,  h_{d, 1}^u,  \cdots,  h_{d, q}^u)
    \]
    for $1\le l\le m+n-2$, after the second transformation $\beta_{0+,1+,+}$, and 
    \[
    (h_{a, 1}^x,  \cdots,  h_{a, m}^x,  h_{b, 1}^y,  \cdots, h_{b, n}^y,  h_{c, 1}^z,  \cdots,  h_{c, p}^z,  h_{d, 1}^u,  \cdots,  h_{d, q}^u).
    \]
   after the last transformation $\beta_{1+,1+,+}$, then we have the summation term correspond to this planar binary tree is 
   \[
   \sum \pm \widehat{\varphi}_1( h_{a,1}^x,\cdots,h_{a,m}^x ) \widehat{\varphi}_2( h_{b,1}^y,\cdots,h_{b,n}^y ) \widehat{\varphi}_3( h_{c,1}^z,\cdots,h_{c,p}^z ) \widehat{\varphi}_4( h_{d,1}^u,\cdots,h_{d,q}^u ).
   \]
\end{Ex}

\subsection{\texorpdfstring{$A_\infty$-structures on the addtive decomposition of the Hochschild homology at the complex level }{A-infinity on the addtive decomposition of the Hochschild homology at the complex level}}\label{section:Ainfinity-on-Hoch-homo}\

It is observed that during the computation of $\widehat{m}_n$, $\beta$ transformation appears multiple times whereas $\alpha$ transformation appears only once. To facilitate the computation, we begin by computating $\beta_{i-,j-,-}$.

\subsubsection{\texorpdfstring{$\beta_{i-,j-,-}$}{beta(i-,j-,-)}}\

We first calculate $\beta_{1-,1-,-}$.

Let $\widehat{\alpha}_x=( g_{1,s} )\in C_s( C_G(x),k )$ and $\widehat{\alpha}_y=( h_{1,t} )\in C_t( C_G(y),k )$, we have
\[  
\hat{\iota}^* \left(\widehat{\alpha}_x \right)=  \iota_{x,s}\left(\widehat{\alpha}_x \right) =\left(g_s^{-1} \cdots g_1 ^{-1} x,  g_{1,  s}\right) \in \mathcal{H}_{x,s} ;    
\]
\[ 
\hat{\iota}^* \left(\widehat{\alpha}_y\right)=\iota_{y,t} \left(\widehat{\alpha}_y\right)=\left(h_t^{-1} \cdots h_1^{-1} y,  h_{1,  t}\right) \in \mathcal{H}_{y,t}.
\]
By Case 2 in Section~\ref{Section: Tate-Hochschild Cohomology of a Group Algebra}, we know
\[
\hat{\iota}^* \left(\widehat{\alpha}_x \right)\cup \hat{\iota}^* \left(\widehat{\alpha}_y\right)= \sum_{g\in G} ( g h_t^{-1} \cdots h_1^{-1} y, h_{1,t}, g^{-1} g_s^{-1} \cdots g_1 ^{-1} x, g_{1,s} ).
\]
For each $z\in X$, we define 
\[
I_z:=\{ g\in G |\ h_{1} \cdots h_{t} g^{-1} g_{s}^{-1} \cdots g_{1}^{-1}  x  g_1\cdots  g_s g h_{t}^{-1} \cdots h_{1}^{-1} y=\phi(g)^{-1} z \phi(g) \text{ for some } \phi(g) \in G\},
\]
then we can write $\hat{\iota}^* \left(\widehat{\alpha}_x \right)\cup \hat{\iota}^* \left(\widehat{\alpha}_y\right)$ as 
\[
\sum_{z\in X} \sum_{g\in I_z} (g_s^{-1}\cdots g_1^{-1} x^{-1} g_1\cdots g_s g h_t^{-1} \cdots h_1^{-1} \phi(g)^{-1} z \phi(g), h_{1,t}, g^{-1} g_s^{-1} \cdots g_1 ^{-1} x, g_{1,s} ).
\]
For any $z\in X$ and $g\in I_z$, there exist $i_g$ such that $\phi(g)\in \overline{C_G(z)}\gamma_{i_g,z}$, assume
\[
\{ k_{i_g,1}^z,\cdots,k_{i_g,s+t+1}^z \}=\spadesuit_{z,i_g}\{ h_1,\cdots,h_t,g^{-1} g_s^{-1} \cdots g_1 ^{-1} x, g_1,\cdots, g_s \},
\]
 then we obtain
\begin{align*}
    &\hat{s}^*( \hat{\iota}^* \left(\widehat{\alpha}_x \right)\cup \hat{\iota}^* \left(\widehat{\alpha}_y\right) )\\
   = &\sum_{z\in X} \sum_{g\in I_z} s_{z,s+t+1}(g_s^{-1}\cdots g_1^{-1} x^{-1} g_1\cdots g_s g h_t^{-1} \cdots h_1^{-1} \phi(g)^{-1} z \phi(g), h_{1,t}, g^{-1} g_s^{-1} \cdots g_1 ^{-1} x, g_{1,s} )\\
    =& \sum_{z\in X} \sum_{g\in I_z} \sum_{j=0}^{s+t+1} (-1)^j ( (k_{i_g,1}^z\cdots k_{i_g,j}^z \gamma_{s_{i_g}^j,z} g_{j+1}'\cdots g_{s+t+1}')^{-1} z, k_{i_g,1}^z,\cdots,k_{i_g,j}^z, \gamma_{s_{i_g}^j,z}, g_{j+1}',\cdots,g_{s+t+1}' ),
\end{align*}
where we re-labeled $\{ h_1,\cdots,h_t,g^{-1} g_s^{-1} \cdots g_1 ^{-1} x, g_1,\cdots, g_s \}$ as $\{ g_1',\cdots,g_{s+t+1}' \}$ in the last step. It follows that, with respect to the 0-end, it suffices to consider the input element $\alpha_x:=( g_s^{-1}\cdots g_1^{-1} x, g_{1,s} )$. Following the computation of $\beta_{0-,1-,-}$, $ \beta_{1-,0-,-} $ and $ \beta_{0-,0-,-} $, we obtain that: for
\[  
\alpha_x=\left(g_s^{-1} \cdots g_1^{-1} x,  g_{1, s}\right) \in \mathcal{H}_{x,s};  
\]
\[ 
\alpha_y=\left(h_t^{-1} \cdots h_1^{-1} y, h_{1, t}\right) \in \mathcal{H}_{y,t};
\]
\[  
\widehat{\alpha}_x:=\left(g_{1, s}\right) \in C_s\left(C_G(x), k\right);
\]
\[ 
\widehat{\alpha}_y:=\left(h_{1, t}\right) \in   C_t\left(C_G(y), k\right),  
\]
we have 
\begin{align*}
	& \hat{s}^*( \alpha_{x} \cup \hat{\iota}^* \left(\widehat{\alpha}_y \right) )= \hat{s}^* ( \hat{\iota}^* \left(\widehat{\alpha}_x\right) \cup \alpha_y )= \hat{s}^*(\alpha_x   \cup \alpha_y  )\\
	=& \sum_{z\in X} \sum_{g\in I_z} \sum_{j=0}^{s+t+1} (-1)^j ( (k_{i_g,1}^z\cdots k_{i_g,j}^z \gamma_{s_{i_g}^j,z} g_{j+1}'\cdots g_{s+t+1}')^{-1} z, k_{i_g,1}^z,\cdots,k_{i_g,j}^z, \gamma_{s_{i_g}^j,z}, g_{j+1}',\cdots,g_{s+t+1}' ).
\end{align*}

It is evident that the key point of the above computation lies in how to obtain sequence 
\[
\{ k_{i_g,1}^z,\cdots,k_{i_g,j}^z, \gamma_{s_{i_g}^j,z}, g_{j+1}',\cdots,g_{s+t+1}' \}
\]
from the sequence $\{ g_1,\cdots,g_s,h_1, \cdots, h_t \}$. To facilitate the description, we define $\clubsuit_{s,t}^{x,g}$ as 
\[
( g_1',\cdots, g_{s+t+1}' )=\clubsuit_{s,t}^{x,g}( g_1,\cdots, g_s,h_1,\cdots,h_t )=( h_1,\cdots,h_t,g^{-1}g_s^{-1} \cdots g_1^{-1}x,g_1.\cdots,g_s )
\]

Therefore, for a plane binary tree of type $\beta_{i-, j-, -}$, the input consists of
\[
g_1, \cdots, g_s \in \overline{C_G(x)}, \quad h_1, \cdots, h_t \in \overline{C_G(y)}.
\]
Given a fixed $z \in X$, $g \in I_z$, and $\phi(g) \in \overline{C_G(z)} \gamma_{i_g, z}$, we fix $j \in \{1, 2, \cdots, s + t + 1\}$ and denote the sequence of transformations as:

\[
\xymatrix@R=1.0pc@C=1.0pc{
& &	g_1 & \cdots & g_s & h_1 \ar@{=>}[dd] ^{\clubsuit_{s,t}^{x,g}} & \cdots & h_t \\
\quad & & \quad & \quad & \quad & \quad & \quad & \quad & & \quad \\
& & g'_1\ar@{-}[dd]_{\spadesuit_{z, i_g}} & \cdots & g'_j\ar@{-}_{\spadesuit_{z, i_g}}[dd] \ar@{-}[ddr] & g'_{j+1}\ar@{-}[ddr]_{\mathrm{id}} & \cdots & g'_{s+t+1}\ar@{-}[rdd]_{\mathrm{id}} & \quad\\
& & \quad & \cdots & \quad & \quad & \quad \quad\quad\quad \cdots & \quad\\
& & k_{i_g,1}^z & \cdots & k_{i_g,j}^z & \gamma_{s_{i_g}^j,  z} & g'_{j+1} & \cdots & g'_{s+t+1} }
\]
Specifically, the second-row sequence is obtained by applying the operation $\clubsuit_{s,t}^{x,g}$ to the entire first-row sequence.  
Following the notation from the previous section, the third-row sequence is obtained from the second-row sequence by first applying the transformation $\spadesuit_{z, i}$ to the first $j$ elements,  
then inserting an additional element $\gamma_{s_{i_g}^{j}, z}$, while keeping the last $s+t-j+1$ elements unchanged.

Moreover, for the transformation type $\beta_{i-, j-, -}$, the sequence transformation remains the same regardless of whether $i, j = 0$ or $1$.  
Therefore, we refer to the above sequence of transformations collectively as the $\beta_{-, -}$ transformation.

\subsubsection{\texorpdfstring{$\alpha_{i-,j-,-}$}{alpha(i-,j-,-)}}\

We calculate $\alpha_{1-,1-,-}$ first, that is, the following operation
\[
\begin{tikzcd}[every matrix/.style={minimum width=0.3em,  minimum height=0.2em}]
{\underset{x \in X}{\bigoplus}\widehat{C}^{<0}\left(C_G(x), k\right)} \arrow[rdd,  phantom] \arrow[rdd,  "\hat{\iota}^*",  no head] & & {\underset{x \in X}{\bigoplus} \widehat{C}^{<0}\left(C_G(y), k\right)} \arrow[ldd,  "\hat{\iota}^*",  no head] \\
\\
& \cup \arrow[dd,  "\hat{\rho^*}",  no head] & {\ \mathcal{D}^{<0}(k G, k G)} \\
& & & {, } \\
& {\alpha_{1-, 1-, -}}                     
\end{tikzcd}
\]
for any $x,y\in X$, $s,t\ge 0$. Let
\[
\widehat{\alpha}_x:=\left(g_{1,  s}\right) \in  \widehat{C}^{-s-1}\left(C_G(x), k\right)=k\left[{\overline{C_G(x)}}^{\times s}\right]\text{ and } \widehat{\alpha}_y:=\left(h_{1, t}\right) \in \widehat{C}^{-t-1}\left(C_G(y),  k\right)=k\left[{\overline{C_G(y)}}^{\times t}\right],
\]
we have
\[
\rho_{z,s+t+1}(\iota_{x,s}\left(\widehat{\alpha}_x\right)  \cup  \iota_{y,t}\left(\widehat{\alpha}_y\right)) 
=\sum_{g \in I_{z}}\left(k_{i_g,1}^z,  \cdots,  k_{i_g,s+t+1}^z\right), 
\]
\[
m_2\left(\widehat{\alpha}_x, \widehat{\alpha}_y\right) = \sum_{z \in X}\sum_{g \in I_z}\left(k_{i_g,1}^z, \cdots, k_{i_g,s+t+1}^z\right), 
\]
where 
\[
I_z:=\{ g\in G |\ h_{1} \cdots h_{t} g^{-1} g_{s}^{-1} \cdots g_{1}^{-1}  x  g_1\cdots  g_s g h_{t}^{-1} \cdots h_{1}^{-1} y=\phi(g)^{-1} z \phi(g) \text{ for some } \phi(g) \in G\},
\]
for any $z\in X$ and $g\in I_z$, $\phi(g)\in \overline{C_G(z)}\gamma_{i_g,z}$, and
\[
\{ k_{i_g,1}^z,\cdots,k_{i_g,s+t+1}^z \}=\spadesuit_{z,i_g}\{ h_1,\cdots,h_t,g^{-1} g_s^{-1} \cdots g_1 ^{-1} x, g_1,\cdots, g_s \}.
\]

By the result of $\beta_{-,-}$ transformation, the input element of 0-end is $( g_s^{-1}\cdots g_1^{-1}x, g_{1,s} )$ for $g_1,\ldots,g_s\in \overline{G}$. By computation of $\alpha_{0-,1-,-}$, $\alpha_{1-,0-,-}$ and $\alpha_{0-,0-,-}$, for 
\[  
\alpha_x=\left(g_s^{-1} \cdots g_1^{-1} x,  g_{1, s}\right) \in \mathcal{H}_{x,s};  
\]
\[ 
\alpha_y=\left(h_t^{-1} \cdots h_1^{-1} y, h_{1, t}\right) \in \mathcal{H}_{y,t};
\]
\[  
\widehat{\alpha}_x:=\left(g_{1, s}\right) \in C_s\left(C_G(x), k\right);
\]
\[ 
\widehat{\alpha}_y:=\left(h_{1, t}\right) \in   C_t\left(C_G(y), k\right),  
\]
then
\[
\rho_{z,s+t+1}(\alpha_{x} \cup \iota_{y,t}\left(\widehat{\alpha}_y\right))= \rho_{z,s+t+1} (\iota_{x,s}\left(\widehat{\alpha}_x\right) \cup \alpha_y)=\rho_{z,s+t+1}(\alpha_x \cup \alpha_y ) 
=\sum_{g \in I_z}\left(k_{i_g,1}^z, \cdots, k_{i_g,s+t+1}^z\right).  
\]

We observe that regardless of whether $i, j = 0$ or $1$,  
the key result obtained from the binary tree of type $\alpha_{i-, j-, -}$ focuses on:
\[
\text{how to derive }\{ k_{i_g,1}^z, \cdots, k_{i_g,s + t + 1}^z \}\text{ from } \{g_1, \cdots, g_s, h_1, \cdots, h_t\}.
\]
For a planar binary tree of type $\alpha_{i-, j-, -}$, the input consists of
\[
g_1, \cdots, g_s \in \overline{C_G(x)}, \quad h_1, \cdots, h_t \in \overline{C_G(y)}.
\]

Given a fixed $z \in X$, $g \in I_z$, and $\phi(g) \in \overline{C_G(z)} \gamma_{i_g, z}$, we fix
\[
j \in \{1, \cdots, s + t+1\},
\]
and denote the sequence of transformations as the $\alpha_{-, -}$ transformation.  
The specific transformation process is as follows:
\[
\xymatrix@R=0.6pc@C=0.8pc{
& g_1 & \cdots & g_s & h_1\ar@2[dd]^{\clubsuit_{s,t}^{x,g}} & \cdots & & h_t & & &\quad\\
\quad & & \quad & \quad & \quad & \quad & & & \quad \\
& h_1\ar@{-}[dd] ^{\spadesuit_{z, i}} & \cdots & h_t\ar@{-}^{\spadesuit_{z, i}}[dd] & g^{-1} g_{s}^{-1} \cdots g_{1}^{-1}x\ar@{-}[dd] ^{\spadesuit_{z, i}}& g_{1}\ar@{-}[dd] ^{\spadesuit_{z, i}}& \cdots & g_{s}\ar@{-}[dd]^{\spadesuit_{z, i}} & \\
& \quad & \cdots & \quad & \quad & & \quad & \quad & & & \quad \\
& k_{i_g,1}^z & \cdots & k_{i_g,j}^z& k_{i_g,j+1}^z & k_{i_g,j+2}^z & \cdots & k_{i_g,s+t+1}^z  
}
\]
This sequence transformation proceeds as follows:  
the second row is obtained by applying the transformation $\clubsuit_{s,t}^{x,g}$ to the first row, and the third row is obtained from the second row by applying the transformation $\spadesuit_{z, i}$.

\subsubsection{\texorpdfstring{Computation of $\widehat{m}_n$}{Computation of mn 2}}\

The multiplication $\widehat{m}_n$ is the combination of all the branching diagrams of type $\alpha_{-, -}$ and $\beta_{-, -}$ described above.  
We explain how to compose these sequence transformations in order to compute $\widehat{m}_n$ in the context of Hochschild homology, leading to the following theorem.

\begin{Thm}\label{thm:mn-on-homo}
The expansion terms in the $A_\infty$-multiplication formula on the Hochschild homology of the group algebra $kG$ at the complex level, up to a sign difference, are determined as follows:

Given 
\[
\widehat{\alpha}_1=(g_{1, 1},\cdots , g_{1, i_1} )  \in {\overline{C_G(y_1)}}^{\times i_1},
\]
\[
\widehat{\alpha}_2=(g_{2,1},\cdots , g_{2, i_2})  \in {\overline{C_G(y_2)}}^{\times i_2},
\]
\[
\vdots
\]
\[
\widehat{\alpha}_n=(g_{n, 1}\dots , g_{n, i_n})  \in {\overline{C_G(y_n)}}^{\times i_n},
\]
where $y_p\in X$ for $p=1,\cdots,n$. To compute $\widehat{m}_n( {\widehat{\alpha}_1}, \cdots, {\widehat{\alpha}_n})$, we need to see the sequence
\[
(g_{1, 1}\dots , g_{1, i_1}, g_{2, 1}, \dots , g_{2, i_2}, \dots , g_{n, 1}, \dots , g_{n, i_n})
\] 
as the input elments of the transformation $\beta_{-,-}$. As we analyzed above, this sequence will perform $n-2$ $\beta_{-,-}$ transformations, and one $\alpha_{-,-}$ transformation at last, we obtain the output sequence
\[
(k_{j, 1}^z, k_{j, 2}^z, \cdots, k_{j, \sum\limits_{p=1}^{n}i_p+2n-3 }^z),
\]
for any $z\in X$ and $j$ such that $1\le j\le n_x$ and $j$ satisfies some condition. Therefore, we have
\[
\widehat{m}_n(\widehat{\alpha}_1, \cdots, \widehat{\alpha}_n) 
=\sum_{z\in X} \sum_{j} \pm (k_{j, 1}^z, k_{j, 2}^z, \cdots, k_{j, \sum\limits_{p=1}^{n}i_p+2n-3 }^z).
\]
\end{Thm}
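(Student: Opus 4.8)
The plan is to reduce the statement, via the homotopy transfer theorem, to the explicit combinatorial bookkeeping already set up for the locally two- and three-branched pieces. Since $m_i=0$ for $i\ge 4$ on $\mathcal{D}^*(kG,kG)$, the homotopy transfer theorem gives $\widehat m_n=\sum_{t\in PT_n}\pm\,\widehat m_t$, where $\widehat m_t$ decorates the tree $t$ by placing $\hat\iota^*$ on the leaves, $\widehat m_2$ (resp.\ $m_3$) at each binary (resp.\ ternary) vertex, $\hat s^*$ on each internal edge and $\hat\rho^*$ at the root. First I would observe that for the Hochschild-homology part every leaf carries an element of $\widehat C^{<0}(C_G(y_p),k)=C_{i_p}(C_G(y_p),k)$, so that $\hat\iota^*=\iota_*$ and $\hat\rho^*=\rho_*$ and $\hat s^*=s_*$ throughout; hence each vertex of $t$ is one of the ``$-$''-type local pieces $\alpha_{i-,j-,-}$, $\beta_{i-,j-,-}$, $\alpha_{i\pm,j\mp,k\pm,-}$, $\beta_{i\pm,j\mp,k\pm,-}$. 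The point is that the \emph{intermediate} elements produced while traversing $t$ from the leaves toward the root always lie in $\mathcal{D}^{<0}(kG,kG)=C_*(kG,kG)$ or in its additive summands $\mathcal H_{z,*}$, so no positive-degree components ever intervene; this is exactly why only the ``$-$'' local algorithms occur.

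Next I would make precise the claim that each $\widehat m_t$ is a composite of the sequence transformations $\clubsuit^{x,g}_{s,t}$, $\spadesuit_{z,i}$ built in Sections~\ref{section:Ainfinity-on-Hoch-homo}. For a binary tree this is the content of the $\beta_{-,-}$ and $\alpha_{-,-}$ transformations already computed: reading $t$ from the top, the topmost vertex is $\beta_{1-,1-,-}$, every internal vertex strictly between top and bottom is one of $\beta_{0-,1-,-},\beta_{1-,0-,-},\beta_{0-,0-,-}$ (all realized by the same $\beta_{-,-}$ transformation, as noted there), and the bottom vertex is $\alpha_{i-,j-,-}$ (realized by the $\alpha_{-,-}$ transformation); a tree with $n$ leaves therefore has $n-2$ applications of $\beta_{-,-}$ followed by a single $\alpha_{-,-}$, producing an output tuple $(k_{j,1}^z,\dots,k_{j,\,\sum i_p+2n-3}^z)\in C_{\sum i_p+2n-3}(C_G(z),k)$ indexed by $z\in X$ and by an admissible index $j$. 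For a tree containing a ternary vertex one simply inserts at that vertex the corresponding $m_3$-based local algorithm ($\alpha_{i\pm,j\mp,k\pm,-}$ or $\beta_{i\pm,j\mp,k\pm,-}$), whose defining formulas are recalled at the end of Section~\ref{Section:mn-algorithm}; each such algorithm, too, is a finite composite of $\spadesuit$- and $\clubsuit$-type steps together with the sign $(-1)^{m+r+s-j}$ coming from the $m_3$-formula. Summing over all $t\in PT_n$ and collecting like terms gives the asserted formula for $\widehat m_n(\widehat\alpha_1,\dots,\widehat\alpha_n)$.

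The remaining work is to verify that the homological degrees match and that all outputs indeed land in a single homology summand $C_N(C_G(z),k)$. Here $N$ is computed inductively: the cup product on the negative part raises total chain degree by $1$ at each binary vertex (Case 2 of the cup-product definition), the homotopy $\hat s^*=s_*$ raises it by $1$ on each internal edge, and $m_3$ on the $(C_r,C^m\text{ or }C_s,\dots)$ inputs contributes the appropriate shift; starting from $\sum_{p=1}^n i_p$ and accounting for $n-1$ vertices and $n-2$ internal edges (binary case) one gets $\sum i_p+2n-3$, and the ternary case gives the same total because a ternary vertex replaces two binary vertices and one internal edge. I would present this degree count as a short lemma. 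The main obstacle I expect is \emph{the sign}: the homotopy-transfer sign attached to each $t\in PT_n$, the Koszul signs produced by $m_3$ and by the differentials on $\mathcal D^{<0}$ (recall the $\partial'$ correction with its $(-1)^{m+1}$), and the signs $(-1)^j$ appearing inside each homotopy $s_*$ must all be tracked and shown to combine into a single well-defined sign $\pm$ for each surviving term; since the statement only asserts the formula ``up to a sign'', it suffices to show that the terms with a fixed output tuple $(k^z_{j,\bullet})$ and fixed indexing data do not cancel, i.e.\ that the sign is a function of the combinatorial data alone. I would handle this by the same device used elsewhere in the paper: fix the output tuple, read off from it the unique chain of local transformations (and hence the unique $t\in PT_n$ and the unique internal indices $g, i, j, \dots$) that produced it, and record the product of the local signs as the definition of $\pm$; injectivity of ``output tuple $+$ auxiliary indices $\mapsto$ ($t$, internal data)'' is what prevents cancellation, and this injectivity is exactly the reconstruction property already exploited in the proofs of Theorem~\ref{Thm: Main 1} and Theorem~\ref{Theorem: main theorem for cohomology}.
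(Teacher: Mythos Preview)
Your approach matches the paper's: express $\widehat m_n$ via the homotopy transfer theorem, observe that every intermediate element stays in $\mathcal D^{<0}(kG,kG)$, decompose each contributing tree into local two-branched pieces ($\beta$ at the non-root vertices, $\alpha$ at the root), and compose the explicit $\clubsuit$/$\spadesuit$ sequence transformations already computed. Two places where you do more work than needed, and one of them obscures a simplification the paper uses silently.

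First, the ternary vertices contribute nothing here, and you should say so rather than invoking the three-branched local algorithms. By item (i) of the $m_3$ formulas in Section~\ref{Section: Tate-Hochschild Cohomology of a Group Algebra}, $m_3(\phi,\varphi,\psi)=0$ whenever all three arguments lie in $C_*(kG,kG)$. Since you have (correctly) argued that every intermediate element produced while traversing $t$ lies in $\mathcal D^{<0}(kG,kG)=C_*(kG,kG)$, any ternary vertex receives three chain inputs and hence gives zero. Thus only planar \emph{binary} trees survive, which is why the paper's Section~\ref{section:Ainfinity-on-Hoch-homo} never mentions the three-branched pieces and works entirely with $\alpha_{i-,j-,-}$ and $\beta_{i-,j-,-}$. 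Your line ``for a tree containing a ternary vertex one simply inserts\ldots'' is therefore unnecessary, and the notation $\alpha_{i\pm,j\mp,k\pm,-}$ you introduce never actually occurs.

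Second, your injectivity/non-cancellation argument misreads the statement. The theorem asserts a formula of the shape $\sum_z\sum_j \pm(\text{tuple})$, where the $\pm$ is a separate sign attached to each summand arising from each tree and each choice of auxiliary data $(g,i_g,j,\ldots)$; it does not claim that distinct summands yield distinct tuples or that no cancellation occurs. So you only need to record the sign produced by each chain of local steps (the $(-1)^j$ from $s_*$, the tree sign from the transfer theorem, and the $(-1)^{m+1}$ from $\partial'$), not prove that the output-to-data map is injective. The paper does not attempt the latter either.
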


\section{\texorpdfstring{$A_{\infty}$-structures in the abelian group case}{A-infinity structures in the abelian group case}}\label{section:abelian-group-case}

In this section, we always let $G$ be an abelian group.

\subsection{The homotopy transfer theorem for the Tate-Hochschild cohomology of G}\ 

\textbf{(1) The additive decomposition of the Hochschild cohomology of $G$ at the complex level}

As an abelian group, $G$ is the set of representatives of the conjugacy classes of elements in $G$. For each $x\in G$, as the conjugacy of $x$, $C_x=\{ x \}$, and the centralizer subgroup $C_G(x)=G$. Define
\[
\mathcal{H}^{x,0}=k [x],\text{ and for } n\ge 1,
\]
\[
\mathcal{H}^{x,n}=\{ \varphi: \overline{G}^{\times n} \longrightarrow kG \ |\ \varphi(g_1, \dots, g_n) \in k[g_1 \cdots g_n x] \subset kG, \ \forall g_1, \dots, g_n \in \overline{G}  \}.
\]

To simplify our computations, we begin by introducing the $\spadesuit$ process we used a lot before, that is, for any $\{ g_1, \cdots, g_n\}\in \overline{C_G(x)}^{\times n}=\overline{G}^{\times n}$,
\[
\{g_1,\cdots,g_n\}=\spadesuit_{x}\{g_1,\cdots,g_n\}.
\]

A more concrete version of Lemma~\ref{Lemma: add Hochschild cohomology} can be formulated in the setting of abelian groups:

The additive decomposition $\mathrm{HH}^*(k G, k G) \simeq \bigoplus_{x \in X} \mathrm{H}^*(G, k)$ can lift to a homotopy deformation retract of complexes
\begin{equation*}\xymatrix@C=0.000000001pc{
		\ar@(lu, dl)_-{s^*}&C^*(kG, kG) \ar@<0.5ex>[rrrrr]^-{\iota^*}&&&&&\bigoplus\limits_{x\in G}  C^*(G, k). \ar@<0.5ex>[lllll]^-{\rho^*}}
\end{equation*}
where $\iota^n=\sum_{x\in G} \iota^{x,n}, \ \rho^n=\sum_{x\in G} \rho^{x,n}$, and $s^n=\sum_{x\in G}s^{x,n}$, for $n\ge 0$.

The map $\iota^{x,n}$ is given by
\begin{equation*}
	\begin{split}
		\iota^{x,n}: \mathcal{H}^{x,n} \rightarrow C^n(G,  k),  \quad
		[\varphi_x: \overline{G}^{\times n}\rightarrow kG]\mapsto [\widehat{\varphi}_x:
		\overline{G}^{\times n}\rightarrow k],
	\end{split}
\end{equation*}
\[
\text{with }\widehat{\varphi}_x(g_{1, n})=a_x \text{ where }\varphi_x(g_{1, n})g_n^{-1}\cdots
g_1^{-1}=a_x x\in k [x].
\]
The map $\rho^{x,n}$ is given by
\begin{equation*}
	\begin{split}
		\rho^{x,n}:  C^n(G,  k)&\rightarrow \mathcal H^{x,n} , \quad
		[\widehat{\varphi}_x:
		\overline{G}^{\times n}\rightarrow k]\mapsto
		[\varphi_x: \overline{G}^{\times n}\rightarrow kG], \end{split}
\end{equation*}
\[
\text{with }\varphi_x\in\mathcal{H}^{x,n} , \text{ and }\varphi_x(g_{1,n})=\widehat{\varphi}_x(g_{1,n})x g_1\cdots g_n.
\]
The homotopy $s^{x,n}$ is given by: for $(\varphi_x: \overline{G}^{\times n} \rightarrow kG)\in \mathcal{H}^{x,n}$, we define $s^{x,n}(\varphi_x) \in \mathcal{H}^{x,n-1}$ as
\[
s^{x,n}(\varphi_x)(g_{1,  n-1})=\sum_{j=0}^{n-1} (-1)^j\widehat{\varphi}_x( g_1,\cdots,g_j,1,g_{j+1},\cdots,g_{n-1} ) x g_1\cdots g_{n-1}=0.
\]
Note that $s^{x,n}(\varphi_x)(g_{1,n-1})=0$ since $1$ appears in each term.

\textbf{(2) The additive decomposition of the Hochschild homology of $G$ at the complex level}

For any $x\in G$, write 
\[
\mathcal{H}_{x, 0}=k [x ], \text{ and for } s\ge 1,
\]  
\[
\mathcal{H}_{x, s}=k\left[\left(g_s^{-1} \cdots g_1^{-1} x, g_{1, s}\right) |\ g_1, \cdots, g_s \in \overline{G}\right].
\]
Let $\mathcal{H}_{x, *}=\bigoplus_{s \ge 0} \mathcal{H}_{x, s}$. It is easy to verify that $C_*(k G, k G)=\bigoplus_{x\in G} \mathcal{H}_{x, *}$.

Similar to the Hochschild cohomology case, Lemma~\ref{Lemma: add Hochschild homology} can also be formulated in the setting of abelian groups:

The additive decomposition $\mathrm{HH}_*(k G,  k G) \simeq \bigoplus_{x \in X} \mathrm{H}_*\left(G, k\right)$ can lift to a homotopy deformation retract of complexes
\begin{equation*}
\xymatrix@C=0.000000001pc{
		\ar@(lu, dl)_-{s_*}&C_*(kG,  kG)=\bigoplus_{x \in G} \mathcal{H}_{x,  *} \ar@<0.5ex>[rrrrr]^-{\rho_*}&&&&&\bigoplus\limits_{x\in G}  C_*(G,  k), \ar@<0.5ex>[lllll]^-{\iota_*}}
\end{equation*}
where $\iota_n=\sum_{x\in G} \iota_{x,n}, \ \rho_n=\sum_{x\in G} \rho_{x,n}$ and $s_n=\sum_{x\in G}s_{x,n}$, for $n\ge 0$.

The injection $\iota_{x,n}$ is given by
\[
\iota_{x,n}: C_n\left(G,  k\right)  \stackrel{\sim}{\longrightarrow} \mathcal{H}_{x,  n},\quad\quad\quad\quad\quad\quad 
\]
\[
{\quad\quad \widehat{\alpha}_{x}=\left(g_{1},  \cdots,  g_{n}\right) \in  {\overline{G}}^{\times n}}  \longmapsto \alpha_{x}=\left(g_n^{-1} \cdots g_1^{-1}x,  g_{1,  n}\right) \in \mathcal{H}_{x,  n} ,
\]
and the surjection $\rho_{x,n}$ is given by
\[
\ \quad\quad\rho_{x,n}: \mathcal{H}_{x, n} \longrightarrow C_n\left(G, k\right), 
\]
\[
{ \alpha_x=\left(g_n^{-1} \cdots g_1^{-1} x, g_{1, n}\right) \in \mathcal{H}_{x, n}  }  \longmapsto \widehat{\alpha}_x=\left(g_1, \cdots, g_n\right) \in \overline{G} ^{\times n}.
\]
The homotopy $s_{x,n}$ is given as follows: for $\alpha_x=\left(g_n^{-1} \cdots g_1^{-1} x, g_{1, n}\right) \in \mathcal{H}_{x, n}$, 
\[
s_{x,n}\left(\alpha_x\right)=\sum_{j=0}^n(-1)^j\left(g_n^{-1} \cdots g_1^{-1} x, g_1, \cdots, g_j, 1, g_{j+1}, \cdots, g_n\right)=0.
\]
This means that $s_{x,n}(\alpha_x)$ is empty since $1$ appears in each term.

From the above analysis and Theorem~\ref{Thm: add tate Hochschild} of abelian group version, we obtain the following Theorem.
\begin{Thm}
Define $\widehat{\mathcal{H}}_x^*$ such that for $m\ge 0$,
\[
\widehat{\mathcal{H}}_x^m=\mathcal{H}^{x,m}, \text{ and }\widehat{\mathcal{H}}_x^{-m-1}=\mathcal{H}_{x,m}, 
\]
then as complex, we have $\widehat{\mathcal{H}}_x^*\simeq \widehat{C}^*(G,k)$. 
\end{Thm}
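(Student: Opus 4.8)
The plan is to assemble the claimed complex-level equivalence $\widehat{\mathcal{H}}_x^* \simeq \widehat{C}^*(G,k)$ by splicing together the abelian-group specializations of Lemma~\ref{Lemma: add Hochschild cohomology} and Lemma~\ref{Lemma: add Hochschild homology}, exactly along the pattern of Theorem~\ref{Thm: add tate Hochschild}. First I would fix $x \in G$ and recall that in the abelian case $C_G(x) = G$, $C_x = \{x\}$, and $n_x = 1$, so the additive decomposition $C^*(kG,kG) = \bigoplus_{x\in G}\mathcal{H}^{x,*}$ restricts on each summand to a \emph{single} homotopy deformation retract between $\mathcal{H}^{x,*}$ and $C^*(G,k)$, with structure maps $\iota^{x,n},\rho^{x,n},s^{x,n}$ as displayed in part (1); likewise $C_*(kG,kG) = \bigoplus_{x\in G}\mathcal{H}_{x,*}$ restricts on each summand to a retract between $\mathcal{H}_{x,*}$ and $C_*(G,k)$ with maps $\iota_{x,n},\rho_{x,n},s_{x,n}$. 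These two families of retracts are already established (they are the abelian cases of the two cited Lemmas), so I may assume the retract identities $1-\iota^{x,n}\rho^{x,n} = \partial s^{x,n}+s^{x,n}\partial$, $\rho^{x,n}\iota^{x,n}=1$, $h^2=0$, $hi=0$, $ph=0$ (and the homology analogues) hold on each summand.

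Next I would define $\widehat{\mathcal{H}}_x^*$ as in the statement — $\widehat{\mathcal{H}}_x^m = \mathcal{H}^{x,m}$ for $m\ge 0$ and $\widehat{\mathcal{H}}_x^{-m-1} = \mathcal{H}_{x,m}$ for $m\ge 0$ — and observe that this is precisely the $x$-summand of $\mathcal{D}^*(kG,kG)$ under the additive decomposition of Theorem~\ref{Thm: add tate Hochschild}; indeed that theorem says $\mathcal{D}^*(kG,kG) \simeq \bigoplus_{x\in X}\widehat{C}^*(C_G(x),k)$ via $\hat\iota^*,\hat\rho^*,\hat s^*$ with $\hat\iota^m=\rho^m$, $\hat\iota^{-m-1}=\iota_m$, $\hat\rho^m=\iota^m$, $\hat\rho^{-m-1}=\rho_m$, $\hat s^m=s^m$, $\hat s^{-m-1}=s_m$, and in the abelian case all of these split over $x\in G$. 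So on the $x$-summand I get maps $\hat\iota^{x,*}\colon \widehat{C}^*(G,k)\to\widehat{\mathcal{H}}_x^*$ and $\hat\rho^{x,*}\colon\widehat{\mathcal{H}}_x^*\to\widehat{C}^*(G,k)$ and a homotopy $\hat s^{x,*}$ by restricting the global ones; the point to verify is that these restrictions are well-defined between the claimed summands and satisfy the retract identities.

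The verification breaks into three regions according to degree. For $m\ge 0$ the identities are just those of the abelian cohomology retract; for $m\le -2$ they are those of the abelian homology retract (with the sign change $\partial'_m=(-1)^{m+1}\partial_{-m-1}$ recorded in the Remark preceding Theorem~\ref{Thm: add tate Hochschild}, which does not affect the homotopy identities up to sign). The only genuinely new point is the ``seam'' at degrees $-1,0$, where the differential is the trace map $\tau$; here I would check that $\hat\iota^{x,*}$ and $\hat\rho^{x,*}$ are chain maps across $\tau$ — that is, $\tau_G\circ\hat\rho^{x,-1} = \hat\rho^{x,0}\circ\tau_{kG}$ and dually for $\hat\iota^{x,*}$ — using that in the abelian case $\tau_{kG}(y)=\sum_{g}gyg^{-1}=|G|\,y$ while $\tau_G(1)=|G|$ (as recorded in Section~\ref{section:additive-decomposition-TT}), together with $\iota_{x,0}(\lambda)=\lambda x$, $\rho_{x,0}(\lambda x)=\lambda$, $\rho^{x,0}(\lambda)=\lambda x$, $\iota^{x,0}(\lambda x)=\lambda$; the compatibility then reduces to the identity $|G|\cdot\lambda = |G|\cdot\lambda$. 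I would also note $\hat s^{x,-1}=s_{x,0}$ and $\hat s^{x,0}=s^{x,0}=0$ vanish (by the explicit formulas in parts (1) and (2), every term contains the identity element), so the homotopy identity at the seam degenerates to $1-\hat\iota^{x}\hat\rho^{x}$ acting as $\partial'\hat s+\hat s\partial'$ with the $\hat s$ contributions trivial, which one checks directly. Assembling the three regions gives the strong homotopy deformation retract $\widehat{\mathcal{H}}_x^* \rightleftarrows \widehat{C}^*(G,k)$, hence the asserted quasi-isomorphism $\widehat{\mathcal{H}}_x^*\simeq\widehat{C}^*(G,k)$.

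The main obstacle I anticipate is purely bookkeeping rather than conceptual: making sure the indexing conventions (the shift $\widehat{\mathcal{H}}_x^{-m-1}=\mathcal{H}_{x,m}$, the degree-$(-1)$ slot, and the sign twist $\partial'$) are threaded consistently so that the homotopy identities in the three degree ranges glue without sign discrepancies at the seam. Since each piece is already proved in the cited Lemmas and in Theorem~\ref{Thm: add tate Hochschild}, once the seam compatibility with $\tau$ is checked the rest is immediate.
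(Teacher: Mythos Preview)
Your approach is correct but considerably more elaborate than what the paper does, and it slightly undershoots the conclusion. The paper's entire proof is the one-line observation that $\hat\iota^*\hat\rho^*=\mathrm{id}$ and $\hat\rho^*\hat\iota^*=\mathrm{id}$: in the abelian case $n_x=1$ and the $\spadesuit$ process is the identity, so the explicit formulas for $\iota^{x,n},\rho^{x,n}$ (and $\iota_{x,n},\rho_{x,n}$) are visibly inverse bijections in every degree, giving an honest isomorphism of complexes, not merely a quasi-isomorphism.

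You do notice that $\hat s^{x,-1}$ and $\hat s^{x,0}$ vanish at the seam, but the point is that $s^{x,n}=0$ and $s_{x,n}=0$ in \emph{every} degree (the formulas displayed in parts~(1) and~(2) just above the theorem show every term contains the identity element of $G$). Once you observe this globally, the retract identity $1-\hat\iota\hat\rho=\partial\hat s+\hat s\partial$ collapses to $\hat\iota\hat\rho=\mathrm{id}$, and combined with $\hat\rho\hat\iota=\mathrm{id}$ you are done without needing to split into three degree regions or argue separately about the seam. Your seam check (compatibility of $\tau_{kG}$ with $\tau_G$) is fine and would in any case be subsumed in verifying that $\hat\iota^*,\hat\rho^*$ are chain maps, which is already part of Theorem~\ref{Thm: add tate Hochschild}. (A small slip: your retract identities are written with $\iota$ and $\rho$ swapped relative to the paper's conventions — the homotopy lives on the $\mathcal{H}^{x,*}$ side — but this does not affect the argument.)
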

\begin{proof}
It is easy to verify that  $\hat{\iota}^*\hat{\rho}^*=\mathrm{id}$ and $\hat{\rho}^*\hat{\iota}^*=\mathrm{id}$.
\end{proof}

\subsection{\texorpdfstring{The operation $\widehat{m}_n$ for abelian groups}{The operation mn for abelian groups}}\

Recall that we have a $A_{\infty}$-algebra structure $(m_1,m_2,m_3,\cdots)$ on $\mathcal{D}^*(kG,kG)$, with $m_1=d^*$, $m_2=\cup$ and $m_i=0$ for $i> 3$. Since $\hat{s}^*=0$ for abelian group, when we calculate $A_{\infty}$-structure $( \widehat{m}_1, \widehat{m}_2, \widehat{m}_3, \cdots )$, we have $\widehat{m}_n=\pm \hat{\rho}^* ( m_n(\hat{\iota}^{*\otimes n}) )$, so $\widehat{m}_i=0$ for $i>3$. Furthermore, we recall the following result in \cite{LWZ21}.

\begin{Thm}\cite[Corollary 4.11]{LWZ21}\label{theorem:Ainfity-finite-group}
    Let $G$ be a finite abelian group. Then we have 
    \[
    \bigoplus_{x\in X} \widehat{G}^*(C_G(x),k)=\bigoplus_{x\in G}\widehat{C}^*(G,k).
    \]
    Assume that we already have $A_{\infty}$-structure $(\widehat{m}_1', \widehat{m}_2', \cdots)$ on $\widehat{C}^*(G, k)$, then we have the following isomorphisms as $A_{\infty}$ algebras:
    \[
    \phi:\;	D^*(kG,kG)\simeq\bigoplus_{x\in G}  \widehat{C}^*(C_G(x), k)\simeq kG\otimes  \widehat{C}^*(G, k).
    \]
    More precisely, the $A_{\infty}$-structure $(\widehat{m}_1, \widehat{m}_2, \cdots)$ on $kG\otimes  \widehat{C}^*(G, k)$ is given as follows.
    \[
    \widehat{m}_p((g_1\otimes \widehat{\alpha}_1), \cdots, (g_p\otimes \widehat{\alpha}_p)) =g_1\cdots g_p\otimes \widehat{m}'_p(\widehat{\alpha}_1, \cdots,  \widehat{\alpha}_p)
    \]
    for any $g_i \otimes \alpha_i\in kG\otimes  \widehat{C}^*(G, k)$ ($i=1,\cdots,p$).
\end{Thm}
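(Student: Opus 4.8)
The plan is to verify that the map $\phi$ constructed in Theorem~\ref{Thm: add tate Hochschild} (abelian version), combined with the identifications $\widehat{\mathcal{H}}_x^* \simeq \widehat{C}^*(G,k)$ from the preceding theorem, is compatible with all the higher operations $\widehat{m}_p$. Since $G$ is abelian and $\hat{s}^* = 0$, the homotopy transfer theorem degenerates dramatically: the only planar trees contributing to $\widehat{m}_p$ are those with no internal edges decorated by a nonzero homotopy, and because $m_i = 0$ for $i > 3$ on $\mathcal{D}^*(kG,kG)$, we get $\widehat{m}_p = \pm\, \hat\rho^*\bigl(m_p(\hat\iota^* \otimes \cdots \otimes \hat\iota^*)\bigr)$ directly, with $\widehat{m}_p = 0$ for $p > 3$. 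So the first step is to record this reduction carefully.

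The second step is a direct computation of $\widehat{m}_2$ and $\widehat{m}_3$ on $kG \otimes \widehat{C}^*(G,k)$. Using the explicit formulas for $\hat\iota^*$, $\hat\rho^*$ recalled in the abelian subsection, one writes $\hat\iota^*(g \otimes \widehat{\alpha})$ as the element $\varphi$ of $\mathcal{H}^{x,n}$ (or of $\mathcal{H}_{x,n}$ in negative degrees) with $x = g$ and $\widehat{\varphi}_x = \widehat{\alpha}$; then one applies the cup product formulas from the definition of $\cup$ on $\mathcal{D}^*(kG,kG)$. In every one of the six cases of the cup product, the group-algebra coefficient bookkeeping collapses: since $G$ is abelian the conjugation $g^{-1}(\cdots)g$ is trivial, the sums $\sum_{g \in G}$ in Cases 2, 4, 6 simply multiply by $|G|$ or collapse after re-indexing, and the $\spadesuit$-process is the identity. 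The upshot is that the group-element part multiplies, $g_1 \cdot g_2$, while the scalar part is exactly the cup product $\widehat{m}_2'(\widehat{\alpha}_1,\widehat{\alpha}_2)$ on $\widehat{C}^*(G,k)$; similarly $m_3$ in its two nonzero cases factors through $g_1 g_2 g_3 \otimes \widehat{m}_3'(\widehat{\alpha}_1,\widehat{\alpha}_2,\widehat{\alpha}_3)$ after applying $\hat\rho^*$. One then invokes Theorem~\ref{theorem:Ainfity-finite-group} — or rather re-derives its content — to package $\widehat{m}_p((g_1\otimes\widehat{\alpha}_1),\dots,(g_p\otimes\widehat{\alpha}_p)) = g_1\cdots g_p \otimes \widehat{m}_p'(\widehat{\alpha}_1,\dots,\widehat{\alpha}_p)$.

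The third step is to check that this formula genuinely defines an $A_\infty$-structure and that $\phi$ is an $A_\infty$-isomorphism. Here the key observation is that $kG$, being commutative, acts as a "coefficient" tensor factor: if $(\widehat{C}^*(G,k), \widehat{m}_1', \widehat{m}_2', \dots)$ satisfies the Stasheff identities, then so does $kG \otimes \widehat{C}^*(G,k)$ with $\widehat{m}_p$ defined by multiplying the $kG$-parts, because in each Stasheff relation the product of all the involved group elements is the same in every term (commutativity and associativity of $kG$ guarantee this), so the relation reduces termwise to the corresponding relation for $\widehat{m}_p'$. That $\phi$ is an isomorphism of complexes is already established in the preceding theorem ($\hat\iota^*\hat\rho^* = \mathrm{id}$, $\hat\rho^*\hat\iota^* = \mathrm{id}$), and compatibility with the higher products is exactly what steps one and two show.

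The main obstacle I expect is the careful handling of signs and of the $\sum_{g \in G}$ terms appearing in Cases 2, 4, 6 of the cup product (and the two cases of $m_3$): one must confirm that after applying $\hat\rho^*$ — which in the homology-valued cases is the projection $\rho_m$ picking out a single coset representative — the redundant sum over $G$ does not produce an extra factor of $|G|$, i.e. that the constraint set $I_z$ in each case is a singleton (or that the projection kills all but one term). This is where the abelian hypothesis does the real work, and it needs to be spelled out case by case rather than asserted; the sign $\pm$ in the statement is deliberately left implicit precisely because tracking it through the Koszul signs of the homotopy transfer formula is delicate but routine.
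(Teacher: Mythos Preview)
The paper does not prove this theorem; it is quoted verbatim as \cite[Corollary 4.11]{LWZ21} and then \emph{used} to reduce the computation of the $A_\infty$-structure on $\bigoplus_{x\in G}\widehat{C}^*(G,k)$ to that on a single copy of $\widehat{C}^*(G,k)$. The only supporting material the paper supplies is the verification (in the preceding subsection) that $\hat s^*=0$ in the abelian case and that $\widehat{\mathcal H}_x^*\simeq\widehat{C}^*(G,k)$ as complexes. So there is no ``paper's own proof'' to compare against.

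Your proposed argument is a correct and natural way to establish the result directly, and it is essentially what the cited reference does. One point deserves correction, though. You write that in Cases~2, 4, 6 ``the redundant sum over $G$ does not produce an extra factor of $|G|$'' and that ``the constraint set $I_z$ in each case is a singleton.'' This is not quite right: the sum $\sum_{g\in G}$ is \emph{not} killed or collapsed---it survives intact and becomes part of the definition of $\widehat m_2'$ (and $\widehat m_3'$) on $\widehat{C}^*(G,k)$, exactly as the paper records in its explicit formulas for $\widehat m_2'$ case~2, case~4, case~6. What you actually need to check is that, for inputs in the $x$- and $y$-components, every term of the sum lands in the $xy$-component (this follows because $G$ is abelian: the computation $g_0 g_1\cdots g_{s+t+1}=xy$ is independent of $g$), and that the resulting element of $C_*(G,k)$ agrees with $\widehat m_2'$ applied to the bare sequences after a harmless re-indexing $g\mapsto g x^{-1}$ of the summation variable. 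So the sum does not disappear; rather, its $x$-dependence is absorbed by the bijection $G\to G$. With this adjustment your three-step outline goes through.
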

From this theorem, to compute the $A_{\infty}$-structure on $\bigoplus\limits_{x\in X}\widehat{C}^*(C_G(x),k)$, it is enough to compute the $A_{\infty}$-structure on $\widehat{C}^*(G,k)$. We now proceed to compute the $A_{\infty}$-structure $(\widehat{m}_1', \widehat{m}_2', \cdots)$ on $\widehat{C}^*(G,k)$.

\textbf{(1) The differential on the complex $(\widehat{C}^*(G,k), \delta_*)$ (i.e. $\widehat{m}_1'$):}
\[
\widehat{C}^*(G, k)=\left(\cdots \xrightarrow{\partial_{2}} C_{1} (G, k) \xrightarrow{\partial_{1}} C_{0} (G, k) \xrightarrow{\tau} C^{0} (G, k) \xrightarrow{\delta^{0}} C^{1} (G, k) \xrightarrow{\delta^{1}} \cdots\right),
\]
\begin{itemize}
    \item [(i)] for $n\ge 0$,
    \[
    \widehat{C}^n (G, k)=C^n (G, k)= \mathrm{Map}(\overline{G}^{\times n}, k),
    \]
    and the differential is given by $\delta_n =\delta^n$, where $\delta^n(\varphi)$ sends $g_{1, n+1} \in \overline{G}^{\times n+1} $ to
    \[
    g_1 \varphi\left(g_{2, n+1}\right)+\sum_{i=1}^n(-1)^i \varphi\left(g_{1, i-1}, g_i g_{i+1}, g_{i+2, n+1}\right)+(-1)^{n+1} \varphi\left(g_{1, n}\right).
    \]

    \item [(ii)] for $n\le -1$ (let $s=-n-1\ge 0$),
    \[
    \widehat{C}^n(G, k)=C_s(G, k)= k[\overline{G} ^{\times s} ], \text{ and for } n\le -3,
    \]
    $\delta_n=\partial_s: k [\overline{G}^{\times s} ] \rightarrow  k [\overline{G}^{\times(s-1)} ]$ is defined by 
    \[
    g_{1, s} \mapsto g_{2, s}+\sum_{i=1}^{s-1}(-1)^i \left(g_{1, i-1}, g_i g_{i+1}, g_{i+2, s}\right)+(-1)^{s}  \left(g_{1, s-1}\right).
    \]
    Moreover, $\delta_{-2}=\partial_1: k[\overline{G}] \rightarrow k$ is defined by $\partial_1 \left( g_1 \right)=0$.

    \item [(iii)] $\delta_{-1}=\tau: C_0(G, k)=k \rightarrow k=C^0(G, k)$ is given by $\tau(1)  =|G|$.
\end{itemize}
Then we obtain the definition of $\widehat{m}_1'$,
    \[
    \widehat{m}_1'(\alpha)=\left\{\begin{array}{ll}
 	(-1)^{m+1} \partial_{-m-1}(\alpha) & \text{for}\ \alpha \in \widehat{C}^m(G, k)\ \text{and}\ m<-1,\\
 	\tau(\alpha)& \text{for}\ \alpha \in \widehat{C}^{-1}(G, k), \\
 	\delta^m(\alpha) & \text{for}\ \alpha \in \widehat{C}^m(G, k)\ \text{and}\ m\ge 0.
    \end{array}\right.
    \]

\textbf{(2) The definition of $\widehat{m}_2'$:}

\begin{center}
	\begin{tikzpicture}[
	x=0.5pt, 
	y=0.5pt, 
	yscale=-1,
	xscale=1,
	every node/.style={font=\small}
	]
	\draw  [color={rgb, 255:red, 208; green, 2; blue, 27 }  ,draw opacity=1 ] (392,117.59) -- (247.77,255.49) -- (102.96,118.85) ;
	\draw [color={rgb, 255:red, 208; green, 2; blue, 27 }  ,draw opacity=1 ]   (248,256.59) -- (272,332.59) ;
	\draw   (617,112.59) -- (452.78,259.55) -- (287.95,113.28) ;
	\draw  [color={rgb, 255:red, 74; green, 144; blue, 226 }  ,draw opacity=1 ] (428,113.59) -- (340.23,223.58) -- (252,113.96) ;
	\draw  [dash pattern={on 4.5pt off 4.5pt}]  (453,258.59) -- (426,354.59) ;
	\draw  [color={rgb, 255:red, 245; green, 166; blue, 35 }  ,draw opacity=1 ] (598,110.44) -- (323.34,272.53) -- (48,111.59) ;
	\draw [color={rgb, 255:red, 74; green, 144; blue, 226 }  ,draw opacity=1 ]   (340,223.59) -- (341,238.59) ; 
	\draw [color={rgb, 255:red, 245; green, 166; blue, 35 }  ,draw opacity=1 ]   (324,272.59) -- (323,299.59) ; 
	\draw [color={rgb, 255:red, 208; green, 2; blue, 27 }  ,draw opacity=1 ] [dash pattern={on 4.5pt off 4.5pt}]  (173,316.59) -- (248,256.59) ; 
	\draw    (453,258.59) -- (502,344.59) ; 
	\draw    (95,57.6) -- (96,77.6) ;
	\draw    (267,55.6) -- (266,77.6) ; 
	\draw    (430,56.6) -- (430,79.6) ; 
	\draw    (578,50.6) -- (578,73.6) ;
	\draw (219,75) node [anchor=north west][inner sep=0.75pt]    {$\widehat{\alpha}_1 :=( g_{1,s})$};
	\draw (380.67,77) node [anchor=north west][inner sep=0.75pt]    {$\widehat{\alpha }_2 :=( h_{1,t})$};
	\draw (27,77) node [anchor=north west][inner sep=0.75pt]    {$\widehat{\varphi }_{1} :\overline{G}^{\times n}\rightarrow k $};
	\draw (517,70) node [anchor=north west][inner sep=0.75pt]    {$\left[\widehat{\varphi }_{2} :\overline{G}^{\times m}\rightarrow k\right]$};
	\draw (45,16) node [anchor=north west][inner sep=0.75pt]    {$\widehat{C}^{n\ge 0}( G,k)$};
	\draw (376,18) node [anchor=north west][inner sep=0.75pt]    {$\widehat{C}^{t'< 0}( G,k)$};
	\draw (211,15) node [anchor=north west][inner sep=0.75pt]    {$\widehat{C}^{s'< 0}( G,k)$};
	\draw (541,15) node [anchor=north west][inner sep=0.75pt]    {$\widehat{C}^{m\ge 0}( G,k)$};
	\draw (305,294) node [anchor=north west][inner sep=0.75pt]    {$\widehat{m}_2 case1$};
	\draw (321,231) node [anchor=north west][inner sep=0.75pt]    {$\widehat{m}_2 case2$};
	\draw (159.25,299.55) node [anchor=north west][inner sep=0.75pt]  [rotate=-321.3]  {$n+t'\ge 0$};
	\draw (265.2,249.02) node [anchor=north west][inner sep=0.75pt]  [rotate=-66.9]  {$n+t'< 0$};
	\draw (399.32,337.49) node [anchor=north west][inner sep=0.75pt]  [rotate=-291.28]  {$m+s'\ge 0$};
	\draw (497.2,266.02) node [anchor=north west][inner sep=0.75pt]  [rotate=-66.9]  {$m+s'< 0$};
	\draw (250,329) node [anchor=north west][inner sep=0.75pt] {$\widehat{m}_2 case3$};
	\draw (138,314) node [anchor=north west][inner sep=0.75pt] {$\widehat{m}_2 case4$};
	\draw (490,348) node [anchor=north west][inner sep=0.75pt] {$\widehat{m}_2 case5$};
	\draw (403,356) node [anchor=north west][inner sep=0.75pt] {$\widehat{m}_2 case6$};
	\draw (360,54) node [anchor=north west][inner sep=0.75pt]  [font=\tiny]  {$t=-1-t'$};
	\draw (191,55) node [anchor=north west][inner sep=0.75pt]  [font=\tiny]  {$s=-1-s'$};
\end{tikzpicture}
\end{center}
Percisely, the results are given as follows:

$\bf{\widehat{m}_2' case1}$
\[
\widehat{m}_2'\left(\widehat{\varphi}_1, \widehat{\varphi}_2\right)\left(h_1, \cdots, h_n, h_{n+1}, \cdots, h_{n+m}\right) = \widehat{\varphi}_1\left(h_1, \cdots, h_n\right) \widehat{\varphi}_2\left(h_{n+1}, \cdots, h_{n+m}\right).
\]

$\bf{\hat{m}_2' case2}$
\[
\widehat{m}_2'\left(\widehat{\alpha}_1, \widehat{\alpha}_2\right) =  \sum_{g \in G} \left(h_1,h_2,\cdots h_t,g^{-1} g_s^{-1} \cdots g_1^{-1}, g_1,g_2,\cdots,g_s \right).
\]

$\bf{\widehat{m}_2' case3}$
\begin{align*}
\widehat{m}_2'\left(\widehat{\varphi}_1,\widehat{\alpha}_2\right)
&= \widehat{\varphi}_1\left(h_{t-n+1},\cdots, h_t\right) \left(h_1,\cdots, h_{t-n}\right)\\
&= \left(\widehat{\varphi}_1 \left(h_{t-n+1},\cdots, h_t\right)  h_1,\cdots, h_{t-n}\right).
\end{align*}

$\bf{\widehat{m}_2' case4}$
\[
\widehat{m}_2'\left(\widehat{\varphi}_1,\widehat{\alpha}_2\right)\left( g_{1, n-t-1}\right) 
= \sum_{g \in G}  \widehat{\varphi}_1\left(g_{1,n-t-1},g^{-1}, h_{1,t}\right).
\]

$\bf{\widehat{m}_2' case5}$ 
\[
\widehat{m}_2'\left(\widehat{\alpha}_1,\widehat{\varphi}_2\right) = \widehat{\varphi}_2 \left(g_1, \cdots, g_m\right) (g_{m+1}, \cdots, g_s)= ( \widehat{\varphi}_2 \left(g_1, \cdots, g_m\right)g_{m+1}, \cdots, g_s).
\]

$\bf{\widehat{m}_2' case6}$
\[
\widehat{m}_2'\left(\widehat{\alpha}_1,\widehat{\varphi}_2\right)\left(	h_{1, m-s-1} \right)  
= \sum_{g \in G }\widehat{\varphi}_2\left(g_{1,s},g^{-1}, h_{1,m-s-1}\right).
\]

\textbf{(3) The formula for $\widehat{m}_3'$:}

$\widehat{m}_i'=0$ for $i>3$ and $\widehat{m}_3'=0$ except for the following two cases.

\begin{itemize}
    \item [(i)]  For $\widehat{\phi} \in \widehat{C}^m(G, k)$, $\widehat{\varphi }\in \widehat{C}^n(G, k)$ and $\widehat{\alpha}=\left(g_1, \cdots, g_r\right) \in \widehat{C}_r(G, k)$, if $r+2 \le m+n$, $\widehat{m}_3'(\widehat{\phi}, \widehat{\alpha}, \widehat{\varphi}) \in \widehat{C}^{m-r+n-2}(G, k)$ is defined by 
    \begin{align*}
     &\widehat{m}_3'(\widehat{\phi}, \widehat{\alpha}, \widehat{\varphi})\left(h_1, \cdots, h_{m-r+n-2}\right) \\
     &=\sum_{g \in G} \sum_{j=\max\{1,r+2-m\}}^{\min \{n, r+1\}}(-1)^{m+r+j-1} \widehat{\phi}\left(h_{1, m-r+j-2}, g, g_{j, r}\right)  \widehat{\varphi}\left(g_{1, j-1}, g^{-1},\right. 
     \left.h_{m-r+j-1, m-r+n-2}\right). 
     \end{align*}

     \item [(ii)] For $\widehat{\alpha}=\left(g_1, \cdots, g_r\right) \in \widehat{C}_r (G,k)$, $\beta =\left(h_1,\cdots,h_s\right) \in \widehat{C}_s(G,k)$ and $\widehat{\phi} \in \widehat{C}^m(G,k)$, if $m-r \le s+1 $, then
     \[ 
     \widehat{m}_3'(\widehat{\alpha}, \widehat{\phi}, \widehat{\beta})=\sum_{g \in G} \sum_{j=\max\{0,s+1-m\}}^{\min\{s,r-m+s+1\}}(-1)^{m+r+s-j}\widehat{\phi}\left(g_{1, m-s+j-1}, g, h_{j+1, s}\right) \left( h_{1, j}, g^{-1}, g_{m-s+j, r}\right).
     \]
\end{itemize}

\subsection{Examples for abelian groups}

\subsubsection{\texorpdfstring{$G=\mathbb{Z}_2$}{G Z2}}\

Assume that $G=\{g,1\}$, with $g^2=1$.

We recall the definition of $(\widehat{C}^*(G,k),\delta_*)$:
\[
\widehat{C}^*(G, k)=\left(\cdots \xrightarrow{\partial_{2}} C_{1} (G, k) \xrightarrow{\partial_{1}} C_{0} (G, k) \xrightarrow{\tau} C^{0} (G, k) \xrightarrow{\delta^{0}} C^{1} (G, k) \xrightarrow{\delta^{1}} \cdots\right),
\]
\begin{itemize}
    \item [(i)] for $n\ge 0$,
    \[
    \widehat{C}^n (G, k)=C^n (G, k)= \mathrm{Map}(\overline{G}^{\times n}, k),
    \]
    and the differential is given by $\delta_n =\delta^n$, where $\delta^n(\varphi)$ sends $g^{\times n+1}$ to
    \[
    g\varphi(g^{\times n})+(-1)^{n+1}\varphi(g^{\times n})=\varphi(g^{\times n})+(-1)^{n+1}\varphi(g^{\times n}).
    \]

    \item [(ii)] for $n\le -1$ (let $s=-n-1\ge 0$),
    \[
    \widehat{C}^n(G, k)=C_s(G, k)= k[\overline{G} ^{\times s} ], \text{ and for } n\le -3,
    \]
    $\delta_n=\partial_s: k [\overline{G}^{\times s} ] \rightarrow  k [\overline{G}^{\times(s-1)} ]$ is defined by 
    \[
   g^{\times s} \mapsto g^{\times (s-1)}+(-1)^s g^{\times (s-1)}.
    \]
    Moreover, $\delta_{-2}=\partial_1=0$.

    \item [(iii)] $\delta_{-1}=\tau: C_0(G, k)=k \rightarrow k=C^0(G, k)$ is given by $\tau(1)  =2$.
\end{itemize}
Then we obtain $\widehat{m}_1'$, we observe that if $\mathrm{char}(k)=2$, $\widehat{m}_1'=0$.

Now we define $\widehat{m}_2'$, for any $\lambda,\mu\in k$, let $\lambda^n\in C^n (G, k): g^{\times n}\to \lambda$ and $\mu_t=\mu(g^{\times t})\in C_t(G,k)$.

\textbf{case 1.} $\lambda^n\in C^n(G,k) $ and $\mu^m\in C^m(G,k) $, $\widehat{m}_2'(\lambda^n,\mu^m)=(\lambda \mu)^{n+m}$.

\textbf{case 2.} $\lambda_s\in C_s(G,k) $ and $\mu_t\in C_t(G,k)$, $\widehat{m}_2'(\lambda_s,\mu_t )=\sum_{a=1}^3 {\lambda\mu}_{s+t+1}$.

\textbf{case 3.} $\lambda^n\in C^n(G,k) $, $\mu_t\in C_t(G,k)$ and $n-t-1\le -1$, $\widehat{m}_2'(\lambda^n,\mu_t)=\widehat{m}_2'(\mu_t,\lambda^n)=(\lambda\mu)_{t-n}$.

\textbf{case 4.} $\lambda^n\in C^n(G,k) $, $\mu_t\in C_t(G,k)$ and $n-t-1\ge 0$, $\widehat{m}_2'(\lambda^n,\mu_t)=\widehat{m}_2'(\mu_t,\lambda^n)=(\lambda\mu)^{n-t-1}$.

For $\widehat{m}_3'$, we only need to discuss the following two cases,
\begin{itemize}
    \item [(1)] $\lambda^m\in C^m(G,k) $, $\mu^n\in C^n(G,k) $, $v_r\in C_r(G,k)$ and $r+2\le m+n$, then
    \[
    \widehat{m}_3'( \lambda^m, v_r, \mu^n)=\sum_{i=\text{max}\{1,r+2-m\}}^{\text{min}\{n,r+1\}} (-1)^{m+r+i-1} (\lambda\mu v)^{m-r+n-2 }.
    \]

    \item [(2)] $\lambda_r\in C_r(G,k) $, $v_s \in C_s(G,k)$, $\mu^m\in C^m(G,k) $ and $m-1\le r+s$,
    \[
    \widehat{m}_3'(\lambda_r,\mu^m,v_s )=\sum_{i=\text{max}\{0,s+1-m\}}^{\text{min}\{s,r-m+s+1\}} (-1)^{m+r+s-i} \lambda (\lambda \mu v)_{r-m+s+2}.
    \]
   
\end{itemize}

\subsubsection{\texorpdfstring{$G=\mathbb{Z}_4$}{G Z4}}\

Assume that $G=\{ g^3,g^2,g,1 \}$, with $g^4=1$. Before calculation, we first give some notations:
\begin{itemize}
    \item for any $n$, write the elements in $\overline{G}^{\times n}$ as 
    \[
    g_{j_1,\cdots,j_n}:=(g^{j_1},\cdots,g^{j_n} ) \text{ with }j_1,\cdots,j_n\in I_3=\mathbb{Z}_4/\{0\}=\{1,2,3\},
    \]

    \item define the map $\lambda^{j_1,\cdots,j_n}\in \mathrm{Map} (\overline{G}^{\times n},k)$ by 
    \[
    \lambda^{j_1,\cdots,j_n} (g_{j_1',\cdots,j_n'})=
    \left\{\begin{array}{ll}
		\lambda,&\text{if } j_1'=j_1,\cdots,j_n'=j_n;\\
		0 ,&\text{otherwise}.
	\end{array}
	\right.
    \]

    \item define the map $c_i:\ I_3^{\times n}\to I_3^{\times n+1}$ as 
    \[
      (j_1,\cdots,j_n)\mapsto 
      \left\{\begin{array}{ll}
		(j_1,\cdots,j_{i-1},2,3,j_{i+1},\cdots,j_n)+(j_1,\cdots,j_{i-1},3,2,j_{i+1},\cdots,j_n) ,&\text{if } j_i=1;\\
        (j_1,\cdots,j_{i-1},3,3,j_{i+1},\cdots,j_n) ,&\text{if } j_i=2;\\
		(j_1,\cdots,j_{i-1},1,2,j_{i+1},\cdots,j_n)+(j_1,\cdots,j_{i-1},2,1,j_{i+1},\cdots,j_n) ,&\text{if } j_i=3.
	\end{array}
	\right.
    \]
\end{itemize}

We recall the definition of $(\widehat{C}^*(G,k),\delta_*)$:
\[
\widehat{C}^*(G, k)=\left(\cdots \xrightarrow{\partial_{2}} C_{1} (G, k) \xrightarrow{\partial_{1}} C_{0} (G, k) \xrightarrow{\tau} C^{0} (G, k) \xrightarrow{\delta^{0}} C^{1} (G, k) \xrightarrow{\delta^{1}} \cdots\right),
\]
\begin{itemize}
    \item [(i)] for $n\ge 0$,
    \[
    \widehat{C}^n (G, k)=C^n (G, k)= \mathrm{Map}(\overline{G}^{\times n}, k),
    \]
    and the differential is given by $\delta_n =\delta^n$, sends $\lambda^{j_1,\cdots,j_n}$ to
    \[
    \sum_{j_0=1}^3\lambda^{j_0,j_1,\cdots,j_n}+\sum_{i=1}^n (-1)^i\lambda^{c_i(j_1,\cdots,j_n)}+(-1)^{n+1}\sum_{j_{n+1}=1}^3 \lambda^{j_1,\cdots,j_n,j_{n+1}}.
    \]

    \item [(ii)] for $n\le -1$ (let $s=-n-1\ge 0$),
    \[
    \widehat{C}^n(G, k)=C_s(G, k)= k[\overline{G} ^{\times s} ], \text{ and for } n\le -3,
    \]
    $\delta_n=\partial_s: k [\overline{G}^{\times s} ] \rightarrow  k [\overline{G}^{\times(s-1)} ]$ is defined by 
    \[
    g_{j_1,\cdots,j_s} \mapsto g_{j_2,\cdots,j_s}+\sum_{i=1}^{s-1}(-1)^i g_{j_1,\cdots,j_ {i-1}, j_i+ j_{i+1}, j_{i+2},\cdots, j_s} +(-1)^{s}  g_{j_1,\cdots, j_{s-1}}.
    \]
    Moreover, $\delta_{-2}=\partial_1=0$.

    \item [(iii)] $\delta_{-1}=\tau: C_0(G, k)=k \rightarrow k=C^0(G, k)$ is given by $\tau(1)  =4$.
\end{itemize}
Then we obtain $\widehat{m}_1'$, in the following, we define $\widehat{m}_2'$,

\textbf{case 1.} $\lambda^{j_1,\cdots,j_n}\in C^n(G,k) $ and $\mu^{k_1,\cdots,k_m}\in C^m(G,k) $,
\[
\widehat{m}_2'(\lambda^{j_1,\cdots,j_n},\mu^{k_1,\cdots,k_m})=(\lambda \mu)^{j_1,\cdots,j_n,k_1,\cdots,k_m}.
\]

\textbf{case 2.} $g_{j_1,\cdots,j_s}\in C_s(G,k) $ and $g_{k_1,\cdots,k_t}\in C_t(G,k)$,
\[
\widehat{m}_2'(g_{j_1,\cdots,j_s}, g_{k_1,\cdots,k_t} )=\sum_{a=1}^3 g_{k_1,\cdots,k_t,a,j_1,\cdots,j_s}.
\]

\textbf{case 3.} $\lambda^{j_1,\cdots,j_n}\in C^n(G,k) $, $g_{k_1,\cdots,k_t}\in C_t(G,k)$ and $n-t-1\le -1$,
\[
\widehat{m}_2'(\lambda^{j_1,\cdots,j_n}, g_{k_1,\cdots,k_t})=
\left\{\begin{array}{ll}
		\lambda g_{k_1,\cdots,k_{t-n}} ,&\text{if } k_{t-n+1}=j_1,\cdots,k_t=j_n;\\
		0 ,&\text{otherwise}.
	\end{array}
	\right.
\]

\textbf{case 4.} $\lambda^{j_1,\cdots,j_n}\in C^n(G,k) $, $g_{k_1,\cdots,k_t}\in C_t(G,k)$ and $n-t-1\ge 0$,
\[
\widehat{m}_2'(\lambda^{j_1,\cdots,j_n}, g_{k_1,\cdots,k_t})=
\left\{\begin{array}{ll}
		\lambda^{j_1,\cdots,j_{n-t-1}} ,&\text{if } k_1=j_{n-t+1},\cdots,k_t=j_n;\\
		0 ,&\text{otherwise}.
	\end{array}
	\right.
\]

\textbf{case 5.} $g_{j_1,\cdots,j_s}\in C_s(G,k)$, $\mu^{k_1,\cdots,k_m}\in C^m(G,k) $ and $m-s-1\le -1$,
\[
\widehat{m}_2'(g_{j_1,\cdots,j_s}, \mu^{k_1,\cdots,k_m})=
\left\{\begin{array}{ll}
		\mu g_{j_{m+1},\cdots,j_s} ,&\text{if } j_1=k_1,\cdots,j_m=k_m;\\
		0 ,&\text{otherwise}.
	\end{array}
	\right.
\]

\textbf{case 6.} $g_{j_1,\cdots,j_s}\in C_s(G,k)$, $\mu^{k_1,\cdots,k_m}\in C^m(G,k) $ and $m-s-1\ge 0$,
\[
\widehat{m}_2'(g_{j_1,\cdots,j_s}, \mu^{k_1,\cdots,k_m})=
\left\{\begin{array}{ll}
		\mu^{k_{s+2},\cdots,k_m} ,&\text{if } j_1=k_1,\cdots,j_s=k_s;\\
		0 ,&\text{otherwise}.
	\end{array}
	\right.
\]

For $\widehat{m}_3'$, we only need to discuss the following two cases,
\begin{itemize}
    \item [(1)] $\lambda^{j_1,\cdots,j_m}\in C^m(G,k) $, $\mu^{k_1,\cdots,k_n}\in C^n(G,k) $, $g_{l_1,\cdots,l_r}\in C_r(G,k)$ and $r+2\le m+n$, then
    \[
    \widehat{m}_3'( \lambda^{j_1,\cdots,j_m}, g_{l_1,\cdots,l_r}, \mu^{k_1,\cdots,k_n})=\sum_{i\in I} (-1)^{m+r+i-1} (\lambda\mu)^{j_1,j_2,\cdots,j_{m-r+i-2},k_{i+1},k_{i+2},\cdots,k_n },
    \]
    where $I=\{ i|\ \text{max}\{1,r+2-m\}\le i\le \text{min}\{n,r+1\}, j_{m-r+i-1}+k_i=4, l_1=k_1,l_2=k_2,\cdots,l_{i-1}=k_{i-1},l_{i+1}=j_{m-r+i},l_{i+1}=j_{m-r+i+1},\cdots,l_r=j_m \}$.

    In particular, if $r+2=m+n$,
    \[
    \widehat{m}_3'( \lambda^{j_1,\cdots,j_m}, g_{l_1,\cdots,l_r}, \mu^{k_1,\cdots,k_n})=
    \left\{\begin{array}{ll}
		-\lambda\mu ,&\text{if } j_1+k_n=4,l_1=k_1,\cdots,l_{n-1}=k_{n-1},l_n=j_2, \cdots,l_r=j_m;\\
		0 ,&\text{otherwise}.
	\end{array}
	\right.
    \]

    \item [(2)] $g_{j_1,\cdots,j_r}\in C_r(G,k) $, $g_{l_1,\cdots,l_s}\in C_s(G,k)$, $\lambda^{k_1,\cdots,k_m}\in C^m(G,k) $ and $m-1\le r+s$,
    \[
    \widehat{m}_3'(g_{j_1,\cdots,j_r},\lambda^{k_1,\cdots,k_m},g_{l_1,\cdots,l_s} )=\sum_{i\in J}(-1)^{m+r+s-i} \lambda g_{l_1,\cdots,l_i,4-k_{m-s+i},j_{m-s+i},\cdots,j_r},
    \]
    where $J=\{i|\text{ max}\{0,s+1-m\}\le i\le \text{min}\{s,r-m+s+1\},k_1=j_1,\cdots,k_{m-s+i-1}=j_{m-s+i-1}; k_{m-s+i+1}=l_{i+1},\cdots,k_m=l_s  \}$.

    In particular, if $m=1$,
    \[
    \widehat{m}_3'(g_{j_1,\cdots,j_r}, \lambda^{k_1},g_{l_1,\cdots,l_s} )=(-1)^{r+1} \lambda g_{l_1,\cdots,l_s,4-k_1,j_1,\cdots,j_r}.
    \]
\end{itemize}

\subsubsection{\texorpdfstring{$G=\mathbb{Z}_2\times \mathbb{Z}_2$}{G Z2 Z2}}\

Assume that $G=\{ g_3,g_2,g_1,1 \}$, whose multiplication table is given by
\[
\begin{array}{c|cccc}
     \cdot & 1 & g_1 & g_2 & g_3 \\
\hline
     1 & 1 & g_1 & g_2 & g_3 \\
   g_1 & g_1 & 1 & g_3 & g_2 \\
   g_2 & g_2 & g_3 & 1 & g_1 \\
   g_3 & g_3 & g_2 & g_1 & 1 \\
\end{array}
\]
Similar to the $\mathbb{Z}_4$ case, we can also introduce the following notations: for any $n\ge 1$,
\begin{itemize}

    \item define the map $\lambda^{j_1,\cdots,j_n}\in \mathrm{Map} (\overline{G}^{\times n},k)$ by 
    \[
    \lambda^{j_1,\cdots,j_n} (g_{j_1',\cdots,j_n'})=
    \left\{\begin{array}{ll}
		\lambda,&\text{if } j_1'=j_1,\cdots,j_n'=j_n;\\
		0 ,&\text{otherwise}.
	\end{array}
	\right.
    \]
    
    \item define the map $c_i:\ I_3^{\times n}\to I_3^{\times n+1}$ as 
    \[
      (j_1,\cdots,j_n)\mapsto 
      \left\{\begin{array}{ll}
		(j_1,\cdots,j_{i-1},2,3,j_{i+1},\cdots,j_n)+(j_1,\cdots,j_{i-1},3,2,j_{i+1},\cdots,j_n) ,&\text{if } j_i=1;\\
        (j_1,\cdots,j_{i-1},1,3,j_{i+1},\cdots,j_n)+(j_1,\cdots,j_{i-1},3,1,j_{i+1},\cdots,j_n) ,&\text{if } j_i=2;\\
		(j_1,\cdots,j_{i-1},1,2,j_{i+1},\cdots,j_n)+(j_1,\cdots,j_{i-1},2,1,j_{i+1},\cdots,j_n) ,&\text{if } j_i=3.
	\end{array}
	\right.
    \]

    \item define the map $d_i:\ I_3^{\times n}\to I_3^{\times n-1}$ as 
    \[
      (j_1,\cdots,j_n)\mapsto 
      \left\{\begin{array}{ll}
		(j_1,\cdots,j_{i-1},6-j_i-j_{i+1},j_{i+2},\cdots,j_n),&\text{if } j_i\neq j_{i+1};\\
		0 ,&\text{otherwise.}
	\end{array}
	\right.
    \]
\end{itemize}

The complex $(\widehat{C}^*(G,k),\delta_*)$ is defined as follows:
\[
\widehat{C}^*(G, k)=\left(\cdots \xrightarrow{\partial_{2}} C_{1} (G, k) \xrightarrow{\partial_{1}} C_{0} (G, k) \xrightarrow{\tau} C^{0} (G, k) \xrightarrow{\delta^{0}} C^{1} (G, k) \xrightarrow{\delta^{1}} \cdots\right),
\]
\begin{itemize}
    \item [(i)] for $n\ge 0$,
    \[
    \widehat{C}^n (G, k)=C^n (G, k)= \mathrm{Map}(\overline{G}^{\times n}, k),
    \]
    and the differential is given by $\delta_n =\delta^n$, sends $\lambda^{j_1,\cdots,j_n}$ to
    \[
    \sum_{j_0=1}^3\lambda^{j_0,j_1,\cdots,j_n}+\sum_{i=1}^n (-1)^i\lambda^{c_i(j_1,\cdots,j_n)}+(-1)^{n+1}\sum_{j_{n+1}=1}^3 \lambda^{j_1,\cdots,j_n,j_{n+1}}.
    \]

    \item [(ii)] for $n\le -1$ (let $s=-n-1\ge 0$),
    \[
    \widehat{C}^n(G, k)=C_s(G, k)= k[\overline{G} ^{\times s} ], \text{ and for } n\le -3,
    \]
    $\delta_n=\partial_s: k [\overline{G}^{\times s} ] \rightarrow  k [\overline{G}^{\times(s-1)} ]$ is defined by 
    \[
    g_{j_1,\cdots,j_s} \mapsto g_{j_2,\cdots,j_s}+\sum_{i=1}^{s-1}(-1)^i g_{d_i(j_1,\cdots, j_s)} +(-1)^{s}  g_{j_1,\cdots, j_{s-1}}.
    \]
    Moreover, $\delta_{-2}=\partial_1=0$.

    \item [(iii)] $\delta_{-1}=\tau: C_0(G, k)=k \rightarrow k=C^0(G, k)$ is given by $\tau(1)  =4$.
\end{itemize}
Then we obtain $\widehat{m}_1'$. The computation of $\widehat{m}_2'$ is the same as in $\mathbb{Z}_4$ case.
For $\widehat{m}_3'$, there is only a minor difference. To make the outcome clearer, we rewrite the result explicitly below.
\begin{itemize}
    \item [(1)] $\lambda^{j_1,\cdots,j_m}\in C^m(G,k) $, $\mu^{k_1,\cdots,k_n}\in C^n(G,k) $, $g_{l_1,\cdots,l_r}\in C_r(G,k)$ and $r+2\le m+n$, then
    \[
    \widehat{m}_3'( \lambda^{j_1,\cdots,j_m}, g_{l_1,\cdots,l_r}, \mu^{k_1,\cdots,k_n})=\sum_{i\in I} (-1)^{m+r+i-1} (\lambda\mu)^{j_1,j_2,\cdots,j_{m-r+i-2},k_{i+1},k_{i+2},\cdots,k_n },
    \]
    where $I=\{ i|\ \text{max}\{1,r+2-m\}\le i\le \text{min}\{n,r+1\}, j_{m-r+i-1}=k_i, l_1=k_1,l_2=k_2,\cdots,l_{i-1}=k_{i-1},l_{i+1}=j_{m-r+i},l_{i+1}=j_{m-r+i+1},\cdots,l_r=j_m \}$.

    In particular, if $r+2=m+n$,
    \[
    \widehat{m}_3'( \lambda^{j_1,\cdots,j_m}, g_{l_1,\cdots,l_r}, \mu^{k_1,\cdots,k_n})=
    \left\{\begin{array}{ll}
		-\lambda\mu ,&\text{if } j_1=k_n,l_1=k_1,\cdots,l_{n-1}=k_{n-1},l_n=j_2, \cdots,l_r=j_m;\\
		0 ,&\text{otherwise}.
	\end{array}
	\right.
    \]

    \item [(2)] $g_{j_1,\cdots,j_r}\in C_r(G,k) $, $g_{l_1,\cdots,l_s}\in C_s(G,k)$, $\lambda^{k_1,\cdots,k_m}\in C^m(G,k) $ and $m-1\le r+s$,
    \[
    \widehat{m}_3'(g_{j_1,\cdots,j_r},\lambda^{k_1,\cdots,k_m},g_{l_1,\cdots,l_s} )=\sum_{i\in J}(-1)^{m+r+s-i} \lambda g_{l_1,\cdots,l_i,k_{m-s+i},j_{m-s+i},\cdots,j_r},
    \]
    where $J=\{i|\text{ max}\{0,s+1-m\}\le i\le \text{min}\{s,r-m+s+1\},k_1=j_1,\cdots,k_{m-s+i-1}=j_{m-s+i-1}; k_{m-s+i+1}=l_{i+1},\cdots,k_m=l_s  \}$.

    In particular, if $m=1$,
    \[
    \widehat{m}_3'(g_{j_1,\cdots,j_r}, \lambda^{k_1},g_{l_1,\cdots,l_s} )=(-1)^{r+1} \lambda g_{l_1,\cdots,l_s,k_1,j_1,\cdots,j_r}.
    \]
\end{itemize}

\bigskip 

\textbf{Acknowledgements}

This work  was supported by the National Key R$\&$D Program of China (No. 2024YFA1013803), by the National Natural Science Foundation of China (No. 12031014, 13004005 and 12371043)),  by  the Fundamental Research Funds for the Central Universities (No. 020314380037),   and by Shanghai Key Laboratory of PMMP (No.
22DZ2229014).

%

\bigskip


\begin{thebibliography}{88}

\bibitem{Brou09}
M.~Broué, \emph{Higman's criterion revisited}, Michigan J. Math., \textbf{58} (2009), 125-179.

\bibitem{Brown82}
K.~S.~Brown, \emph{Cohomology of groups}, Graduate Texts in Mathematics, \textbf{87} (1982), New York-Berlin: Springer.

\bibitem{Buch86}
R.~O.~Buchweitz, \emph{Maximal Cohen-Macaulay modules and Tate-cohomology over Gorenstein rings}, Manuscript, Universität Hannover (1986), 1-155.

\bibitem{CLZ}
J.~Chen, Y. Liu, G.~Zhou, \emph{Algebraic Morse theory via homological perturbation lemma}, to appear in Journal of Pure and Applied Algebra, available at: http://math0.bnu.edu.cn/liuym/.

\bibitem{Ger63} 
M. Gerstenhaber, \emph{The cohomology structure of an associative ring},  Ann. Math.,  \textbf{78}(2) (1963),
267-288.

\bibitem{Hoc45} 
G. Hochschild, \emph{On the cohomology groups of an associative algebra}, Ann. Math., \textbf{46}(1) (1945), 58–67.

\bibitem{Keller01}
B.~Keller, \emph{Introduction to A-infinity algebras and modules}, Homol. Homotopy Appl., \textbf{3}(1) (2001), 1-35.

\bibitem{Kon03}
M.~Kontsevich, \emph{Deformation quantization of poisson manifolds}, Lett. Math. Phys., \textbf{66}(3) (2003), 157-216.

\bibitem{Li20}
L.~Li, \emph{Generalized cup products and $A_{\infty}$-multiplication formulas on Tate-Hochschild cohomology algebras}, Master's thesis (in Chinese) (2020), Beijing Normal University.

\bibitem{LWZ21} 
Y.~Liu, Z.~Wang and G.~Zhou,
\emph{The Batalin-Vilkovisky structure on the Tate-Hochschild cohomology ring of a group algebra,}   Int. Math. Res. Not., \textbf{6} (2021), 4079-4139.

\bibitem{LZ16} 
Y.~Liu  and  G.~Zhou, \emph{The Batalin-Vilkovisky structure over the Hochschild cohomology ring of a group algebra}, J. Noncommut. Geom., \textbf{10} (2016), 811-858.

\bibitem{LV12} 
J.~L.~Loday and B.~Vallette, \emph{Algebraic operads,} Grundlehren der Mathematischen Wissenschaften, \textbf{346} (2012), Springer-Verlag, Berlin.

\bibitem{RW19}
M.~Rivera and Z.~Wang, \emph{Singular Hochschild cohomology and algebraic string operations}, J. Noncommut. Geom., \textbf{13}(1) (2019), 297-361.

\bibitem{SW99} 
S.~S.~Siegel and S.~J.~Witherspoon,
\emph{The Hochschild cohomology ring of a group algebra}, Proc. London
Math. Soc., \textbf{79}(3) (1999), 131-157.

\bibitem{Sta63} 
J.~Stasheff, \emph{Homotopy associativity of H-spaces. I, II}, Trans. Amer. Math. Soc. \textbf{108} (1963), 275-292.


\bibitem{Tate52} 
J.~Tate, \emph{The higher dimensional cohomology groups of class field theory}, 
Ann. Math., \textbf{56}(2) (1952), 294–297.

\bibitem{Wang21}
Z.~Wang, \emph{Gerstenhaber algebra and Deligne's conjecture on the Tate-Hochschild cohomology,} Trans. Amer. Math. Soc., \textbf{374}(7) (2021), 4537-4577.
\end{thebibliography}
\end{document}